\newsavebox{\auteurbm}
  {\small\slshape%
  \savebox{\auteurbm}{\upshape\sffamily#1}%
  \begin{flushleft}}
  {\\[4pt]\usebox{\auteurbm}
  \end{flushleft}\normalsize\upshape}
  \DeclareMathOperator*{\Li}{\mathcal{L}}
  \theoremstyle{definition}
  \newtheorem{defn}{Definition}[section]
  \newtheorem*{defn*}{Definition}
  \theoremstyle{plain}
  \newtheorem{thm}{Theorem}[section]
  \newtheorem{prop}{Proposition}[section]
  \newtheorem*{prop*}{Proposition}
  \newtheorem{lem}{Lemma}[section]
  \newtheorem{cor}{Corollary}[section]
   \newtheorem*{cor*}{Corollary}
  \newtheorem*{theo*}{Theorem}
  \newtheorem*{thm*}{Theorem}
  \theoremstyle{remark}
  \newtheorem{rem}{Remark}[section]
  \newtheorem{ex}{Example}[section]
  \newtheorem{nota}{Notation}[section]
\newcommand{\R}{\mathbb{R}}
\newcommand{\A}{\mathcal{A}}
\newcommand{\B}{\mathcal{B}}
\DeclareMathOperator*{\F}{U}
\newcommand{\M}{\mathcal{M}}
\newcommand{\U}{\operatorname{\mathcal{U}}}
\DeclareMathOperator{\Supp}{\text{Supp}}
 \newcommand\N{\mathbb{N}}
 \newcommand\cat[1]{\textbf{#1}}
\newcommand{\Pa}{\mathcal{P}}
\newcommand{\pr}{\text{pr}}
\DeclareMathOperator*{\vect}{\textbf{Gr}}
\DeclareMathOperator*{\W}{U}
\newcommand{\colim}{\operatorname{colim}}
\title{Interaction decomposition for presheaves}
\author{Grégoire Sergeant-Perthuis}
\newcommand\Pf{\hat{\mathcal{P}}_f}
\newcommand{\Mod}{\cat{Mod}}
\newcommand{\Split}{\cat{Split}}
\def\Plus{\small\texttt{+}}
\DeclareMathOperator{\core}{core}
\newcommand{\Z}{\mathbb{Z}}
\newcommand{\im}{\operatorname{im}}
\newcommand{\id}{\operatorname{id}}
\newtheorem*{rep@theorem}{\rep@title}
\newcommand{\newreptheorem}[2]{%
\newenvironment{rep#1}[1]{%
 \def\rep@title{#2 \ref{##1}}%
 \begin{rep@theorem}}%
 {\end{rep@theorem}}}
  \theoremstyle{plain}
\newcommand{\mylabel}[2]{#2\def\@currentlabel{#2}\label{#1}}
\numberwithin{equation}{section}
\newcommand{\Vect}{\cat{Vect}}
\begin{document}

\begin{abstract}
Consider a collection of vector subspaces of a given vector space and a collection of projectors on these vector spaces, can we decompose the vector space into a sum of vector subspaces such that the projectors are isomorphic to projections? We provide an answer to this question by extending the relation between the intersection property and the interaction decomposition introduced in \cite{GS2} to the projective case. This enables us to classify the decompositions into interaction subspaces for factor spaces when the projectors are given by conditional expectations of a probability measure. We then extend these results for presheaves from a poset to the category of modules by adding the data of a section functor when it exists.

\noindent \textbf{MSC2020 subject classifications:} Primary 06F25; secondary 62H22 \\
\noindent \textbf{Keywords.} Decomposition into interaction subspaces, Ordered vector spaces, Chaos decomposition, 

\end{abstract}

 \maketitle

\section{Introduction}
\subsection{Motivation}

The decomposition into interaction subspaces is an important (class of) construction in probability theory studied for example in the context of graphical models, for the study of the marginal problem \cite{Kellerer1964,Lauritzen,Speed,Yeung,Matus,Davidson,Haberman}, but also in statistical physics where the chaos decomposition given by the Hermite-Ito polynomials \cite{Sinai} shares the same spirit. Let us recall one of these decompositions; let $I$ and $E=\prod_{i\in I}E_i$ be finite sets, for $a\subseteq I$, the $a-$factor subspaces (or $a-$factor space) $\F(a)$ is the space of functions that factor through the projection $p_a:E\to \prod_{i\in a} E_i$. There is a collection of vector subspaces, called the interaction subspaces, $(S_a,a\in \A)$ such that for any $a\in \A$,

\begin{equation}\label{equation-orthogonal-decomposition}
\F(a)= \bigoplus_{b\leq a} S_b
\end{equation}

where the $(S_a,a\in \A)$ are two by two orthogonal, with respect to the canonical scalar product on $\R^E$. There are many other decompositions if one does not ask the $S_a,a\in \A$ to be orthogonal. In fact in \cite{GS1}\cite{GS2} we explain when and how to build such decompositions for functors from a poset to the category of vector spaces.

One way we choose to distinguish the different decompositions is to specify the collection of projectors $\pi_a: \R^E\to \R^E$ on the $a-$factor spaces. In the example we consider those are the orthogonal projectors and in Lauritzen's proof of the decomposition into interaction subspaces (Appendix B of his reference book Graphical Models \cite{Lauritzen}) it is stated that if $(s_a,a\in \A)$ is such that for any $a\in \A$,

\begin{equation}\label{chapitre-3-introduction-decomposition}
\pi_a= \sum_{b\leq a} s_b
\end{equation}

then the $(s_a,a\in \A)$ are the orthogonal projectors on the $(S_a,a\in \A)$. In fact there can be only one collection that satisfies Equation \ref{chapitre-3-introduction-decomposition}.

More generally we will call any collection of projectors $(\pi_a,a\in \A)$, of a vector space $V$, that is such that the collection $(s_a,a\in \A)$ defined by Equation \ref{chapitre-3-introduction-decomposition} is such that,

\begin{equation}\label{chapitre-3-introduction-decomposition-1}
s_as_b= 1[a=b]s_a
\end{equation}

decomposable and the collection $(s_a,a\in \A)$ will be its decomposition, that we will also call its interaction decomposition in honor to the decomposition into interaction subspaces for factor spaces. Not all collections of projectors are decomposable; in this article we give an intrinsic necessary and sufficient condition on the collection of projectors for them to be decomposable which is that, when $\A$ is a meet semi-lattice, for any $a,b\in \A$ such that $b\leq a$, 

\begin{equation}\tag{I}
\pi_a\pi_b= \pi_{a\cap b}
\end{equation}

There exists several examples of interaction decompositions for factor spaces or decompositions into interaction subspaces in the literature. We show that if one considers collection of projectors given by conditional expectations with respect to the $a-$factor spaces and to a given probability measure, we can characterize all of them. More precisely for any probability measure $\mathbb{P}\in \mathbb{P}(E)$, one can define a semidefinite symmetric bilinear form on $\R^E$; conditional expectations with respect to the $a-$factor spaces are analogous to orthogonal projections on these subspaces. We show that these projectors are decomposable if and only if the probability measure is a product measure.

\begin{nota}
For any poset, let $\U(\A)$ denote the set of lower-sets of $\A$, i.e subsets of $\A$ such that for any $a\in \A$,$b\in\B$ such that $a\leq b$ one has that $a\in \B$. We shall also call $\U(\A)$ the poset topology of $\A$. For $a\in \A$, let $\hat{a}=\{b\in \A : b\leq a\}$, $\hat{a}\in \U(\A)$.
\end{nota}

\subsection{Main results}

\subsubsection{Decomposable presheaf}

The collection of projectors we have in mind when dealing with probability with a categorical flavour can be encoded by a presheaf, $F$, from a poset $\A$ to the category of vector spaces $(k)-\Vect$ with respect to functor $G:\A\to \Vect$, with $k$ a commutative field, such that $F$ is a retraction of $G$. In most generality we consider couples of functor/ presheaf, $(G,F)$, from a poset $\A$ to the category of $R-$modules, where $R$ is a commutative ring.

\begin{defn*}[Category $\Split$]
Let $\cat{C}$ be any category, let $\cat{Split}(\cat{C})$ be the subcategory of $\cat{C}\times \cat{C}^{op}$ that has as objects $(M,M)$ with $M$ an object of $\cat{C}$ and for any two objects of $\cat{C}$,$M,M_1$, the morphism of $\Split$ are given by

\begin{equation}
 \cat{Split}(\cat{C})\left((M,M),(M_1,M_1)\right)=\{(s,r)\in \Mod(M,M_1)\times \Mod(M_1,M) : rs=\id\}
\end{equation}
\end{defn*}

A functor from a poset $\A$ to $\Split(\cat{C})$ can be encoded by a couple of functor/presheaf, $(G,F)$, from $\A$ to $\cat{C}$, such that for any $a,b\in \A$ such that $b\leq a$,

\begin{equation}
F^a_bG^b_a= \id
\end{equation}

We will consider two categories, the category of modules $\Mod$ and its subcategory for which all morphisms are isomorphisms. For a collection of functors $(G_a:\A\to\Mod,a\in \A)$ from $\A$ to $\Mod$ and a collection of presheaves $(F_a:\A \to \Mod,a\in \A)$ we will denote $\prod_a G_a 1[a\leq .]$ and respectively $\prod_a F_a1[a\leq .]$ the functors and presheaves defined as follows, for any $a\in \A$,

\begin{eqnarray}
\prod_b G_b 1[b\leq .](a)= \prod_{b\leq a} G_b(a)\\
\prod_b F_b 1[b\leq .](a)=\prod_{b\leq a} F_b(a)
\end{eqnarray}

and the morphisms are respectively the associated inclusions and projections, in other words for $b,d \in \A$ such that $b\leq a$, for $v\in \bigoplus_{c\leq b} G_c(b)$ and $w\in \bigoplus_{c\leq a} G_c(a)$ then,

\begin{eqnarray}
\prod_c G_c 1[c\leq .]^b_a(v)(d)={G_c}^b_a(v_c)1[d\leq b] \\
\prod_c F_c 1[c\leq .]^a_b(w)(d)={F_c}^a_b(w_c) 1[d\leq b]
\end{eqnarray}

\begin{defn}[Core of a category]
The core of a category $\cat{C}$, $\core\cat{C}$, is the subcategory of $\cat{C}$ that has the same objects than $\cat{C}$ but whose morphisms are the isomorphisms of $\cat{C}$.
\end{defn}

We will note $\Split(\Mod)$ simply as $\Split$.

\begin{defn*}[Decomposable functors to $\Split$]
Let $H$ be a functor from $\A$ to $\Split$. $H$ is decomposable when there is a collection $((G_a,F^a),a\in \A)$ of functors from $\A$ to $\Split(\core \Mod)$ such that,

\begin{equation}
H\cong (\prod_c G_c 1[c\leq .], \prod_c F_c 1[c\leq .])
\end{equation}

When $H$ is decomposable we shall call $(\prod_{a\in \A}G_a1[a\leq .],\prod_{a\in \A}F^a1[a\leq .])$ its decomposition and note it as $(\prod_{a\in \A}S_a,\prod_{a\in \A}S^a)$.
\end{defn*}

\subsection{Intersection property for functors from $\A$ to $\Split$}

In order to state the intersection property we must consider posets that are such that $\hat{a}$ is finite for any $a\in \A$; we will note the class of these posets as $\Pf$.\\

\begin{defn*}[Intersection property]
Let $(G,F)$ be a functor from $\A\in \Pf$ to $\Split$. For any $\alpha,a\in \A$ such that $a\leq \alpha$, let $\pi^{\alpha\alpha}_{\alpha a}=G^a_\alpha F^\alpha_a$, $\pi^{\alpha\alpha}_{\alpha a}$ is a projector. For a given $\alpha$, we shall denote this collection as $\pi^{\alpha}$. Let for any $\alpha \in \A$, $(s^{\alpha}_a,a\leq \alpha)$ be characterized by,

\begin{equation}
\pi^{\alpha}_a= \sum_{b\leq a} s^\alpha_a 
\end{equation}

$(G,F)$ is said to satisfy the intersection property for any $\alpha \in \A$ and any $a,b\leq \alpha$,

\begin{equation}\tag{I}
\pi^{\alpha}_a\pi^{\alpha}_b= \sum_{\substack{c: c\leq a\\ c\leq b}} s_c
\end{equation}

\end{defn*}

When the poset $\A$ is meet semi-lattice for any two elements $a,b\in \A$ there is a meet $a\wedge b$; we will note this meet as $a\cap b$. In this case, $(I)$ becomes, for any $\alpha \in \A$ and any $a,b\leq \alpha$,

\begin{equation}
\pi^{\alpha}_a\pi^{\alpha}_b=\pi^{\alpha}_{a\cap b}
\end{equation}

\subsubsection{Main theorem}

\begin{repthm}{decomposable-intersection-presheaf}
Let $(G,F)$ be a functor from $\A\in \Pf$ to $\Split$, $(G,F)$ satisfies the intersection property if and only if $(G,F)$ is decomposable. 

\end{repthm}

A corollary of this equivalence theorem is that one can extend know results on the decomposition into interaction spaces (see \cite{Kellerer1964,Lauritzen,Speed,Yeung,Matus}) for factor spaces by characterizing the probability distributions for which one can build a decomposition.

\begin{cor*}[Interaction Decomposition for factor spaces]
Let $I$ be a finite set, $(E_i,i\in I)$ a collection of finite sets, and $\mathbb{P}$ a probability measure on $E$, $(E_a[.| \mathcal{F}_a],a\in \mathcal{P}(I))$ is decomposable if and only if $\mathbb{P}$ is a product measure, i.e there is $(p_i\in \mathbb{P}(E_i),i\in I)$ such that $\mathbb{P}= \underset{i\in I}{\otimes}p_i$.\\

\end{cor*}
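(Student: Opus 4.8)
The plan is to realize the collection $(E_a[\cdot\,|\,\mathcal{F}_a],a\in\mathcal{P}(I))$ as the projectors $\pi^{I}$ attached to a functor $(G,F)$ from the poset $\mathcal{P}(I)$ to $\Split$, and then to invoke Theorem \ref{decomposable-intersection-presheaf} so as to reduce the statement to a purely probabilistic equivalence. Concretely, I would set $G(a)=F(a)=\F(a)$ to be the $a$-factor space inside $L^2(\mathbb{P})$, take $G^b_a$ to be the inclusion $\F(b)\hookrightarrow\F(a)$ for $b\subseteq a$, and take $F^a_b$ to be the conditional expectation $E_b[\cdot\,|\,\mathcal{F}_b]$ restricted to $\F(a)$. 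That $F$ is a presheaf is exactly the tower property of conditional expectations (since $\mathcal{F}_b\subseteq\mathcal{F}_a$ whenever $b\subseteq a$), and $F^a_bG^b_a=\id$ holds because an $\mathcal{F}_b$-measurable function is fixed by $E_b[\cdot\,|\,\mathcal{F}_b]$; hence $(G,F)$ indeed lands in $\Split$. By construction $\pi^{I}_a=G^a_IF^I_a=E_a[\cdot\,|\,\mathcal{F}_a]$ acting on $\F(I)=L^2(\mathbb{P})$. When $\mathbb{P}$ does not have full support one works throughout in $L^2(\mathbb{P})$, so that all the identities below are understood $\mathbb{P}$-almost surely.

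Next I would reduce the intersection property, which Theorem \ref{decomposable-intersection-presheaf} requires at every $\alpha$, to the single top element $\alpha=I$. Since $b\subseteq\alpha$ gives $\pi^{I}_b(\F(\alpha))\subseteq\F(b)\subseteq\F(\alpha)$, one checks that $\pi^{\alpha}_a$ is simply the restriction of $\pi^{I}_a$ to $\F(\alpha)$; consequently the identity $\pi^{\alpha}_a\pi^{\alpha}_b=\pi^{\alpha}_{a\cap b}$ for all $a,b\leq\alpha$ is the restriction to $\F(\alpha)$ of the corresponding identity at $\alpha=I$, so the case $\alpha=I$ already implies all the others. Using that $\mathcal{P}(I)$ is a meet semi-lattice with meet $\cap$, Theorem \ref{decomposable-intersection-presheaf} then states that $(E_a[\cdot\,|\,\mathcal{F}_a],a\in\mathcal{P}(I))$ is decomposable if and only if
\begin{equation*}
E_a[\cdot\,|\,\mathcal{F}_a]\,E_b[\cdot\,|\,\mathcal{F}_b]=E_{a\cap b}[\cdot\,|\,\mathcal{F}_{a\cap b}]\quad\text{for all }a,b\subseteq I.\tag{$\ast$}
\end{equation*}

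It remains to prove that $(\ast)$ holds if and only if $\mathbb{P}$ is a product measure. For the easy direction, if $\mathbb{P}=\bigotimes_{i\in I}p_i$ then $E_a[\cdot\,|\,\mathcal{F}_a]$ is integration of the coordinates outside $a$ against $\bigotimes_{i\notin a}p_i$; composing $E_a[\cdot\,|\,\mathcal{F}_a]$ after $E_b[\cdot\,|\,\mathcal{F}_b]$ integrates out exactly the coordinates in $a^{c}\cup b^{c}=(a\cap b)^{c}$, and since the measure is a product, Fubini lets one carry this out in one step, yielding $E_{a\cap b}[\cdot\,|\,\mathcal{F}_{a\cap b}]$, which is $(\ast)$. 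For the converse I would use only the instances of $(\ast)$ with nested complementary pairs: writing $I=\{1,\dots,n\}$ and taking $a=\{1,\dots,k-1\}$, $b=\{k\}$, so that $a\cap b=\emptyset$, equation $(\ast)$ reads $E_{\{1,\dots,k-1\}}[E_k[\cdot\,|\,\mathcal{F}_k]\,|\,\mathcal{F}_{\{1,\dots,k-1\}}]=E[\cdot]$. Evaluating both sides on a function of the single coordinate $X_k$ gives $E[v(X_k)|X_1,\dots,X_{k-1}]=E[v(X_k)]$ for every $v$, which is precisely the independence of $X_k$ from $(X_1,\dots,X_{k-1})$; running $k$ from $2$ to $n$ and applying the chain rule of conditional probability factorizes $\mathbb{P}=\bigotimes_{i\in I}p_i$ with $p_i$ the $i$-th marginal.

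I expect the delicate points to be bookkeeping rather than substance: verifying that $(G,F)$ is a genuine functor into $\Split$, that the intersection property localizes to $\alpha=I$, and that phrasing everything in $L^2(\mathbb{P})$ handles the non-full-support case cleanly. The apparent mathematical crux — that the entire family $(\ast)$ forces a product structure — is in fact light, since the nested pairs $(\{1,\dots,k-1\},\{k\})$ already encode, through the chain rule, full joint independence.
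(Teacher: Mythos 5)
Your proof is correct in substance and shares the paper's core strategy --- reduce decomposability to the single identity $\pi_a\pi_b=\pi_{a\cap b}$ via the equivalence theorem, then show that this identity is equivalent to $\mathbb{P}$ being a product measure --- but you route it through heavier machinery than necessary. The corollary is stated and proved in Section \ref{chapitre-3-section-2} for the collection of projectors $(E_a[.|\mathcal{F}_a],a\in\mathcal{P}(I))$ on $V=\mathcal{M}_b(E)$, so the paper invokes Theorem \ref{decomposable-projector} (the projector-level equivalence) directly; your detour through the presheaf-level Theorem \ref{decomposable-intersection-presheaf}, with the construction of $(G,F)$ and the localization of the intersection property to $\alpha=I$, is valid but proves decomposability of a functor to $\Split$ rather than, literally, decomposability of the given collection of endomorphisms in the sense of Definition \ref{decomposable}. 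Related to this, your retreat to $L^2(\mathbb{P})$ changes the ambient space: the statement concerns projectors on $\mathcal{M}_b(E)$ with the paper's convention that conditional expectations vanish off the support, and while your $L^2$ formulation is the natural way to make $F^a_bG^b_a=\id$ hold when the support is not full, it leaves a small gap between what you prove and what is claimed (the paper avoids this by never needing the $\Split$ condition). Finally, for the direction ``decomposable $\Rightarrow$ product'', the paper uses the single family of instances $\pi_{\{i\}}\pi_{\{i\}^c}=\pi_\emptyset$ (independence of each coordinate from all the others), whereas you use the nested chain $\pi_{\{1,\dots,k-1\}}\pi_{\{k\}}=\pi_\emptyset$ plus the chain rule; both are correct and of comparable length, yours being slightly more explicit about how the factorization is assembled.
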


\subsection{Structure of this document}

This document is composed of mainly two parts. We first motivate the framework for decomposability in the context of  collections of projectors and prove the equivalence theorem between intersection property and decomposability in this context (Section \ref{chapitre-3-section-1} \ref{chapitre-3-section-2}). This allows us to characterize the probability distribution that induces, by conditioning, a decomposable collection of projectors. \\

In the second part of this document we extend these results to functors $(G,F)$ from $\A\in \Pf$ to $\Split$ (Section \ref{chapitre-3-section-3}). In order to state the intersection property we show that, by nesting in each element $\alpha\in \A$ the poset $\hat{\alpha}$ and by bringing back $F|_{\hat{\alpha}}$ in $F(a)$, we can use the definitions and results for collection of projectors in $F(\alpha)$ to extend them to functors to $\Split$. We then show that from the  decomposition that can be build for each $\alpha$ we can extract a decomposition for $(G,F)$ proving therefore the main theorem of this document.

\section{Decomposability for collections of projectors}\label{chapitre-3-section-1}
\subsection{Projections on factor spaces}
As the decomposition into interaction subspaces is stated for collection of random variables, let us first give some reasons for why solving this question can be interesting in probability; let $I$ be a finite set, it indexes random variables, let for all $i\in I$, $E_i$ be a finite set with the discrete $\sigma$-algebra, in which the $i$-th random variable takes its values, $E=\prod_{i\in I}E_i$ is the configuration space; for $\omega\in E$, one has that $p_i(\omega)=\omega(i)$, and for $a\subseteq I$ non empty, we will note $\omega_{|a}$ as $x_a$. We will call $E_a=\prod_{i\in a} E_i$ and,

$$\begin{array}{ccccc}
p_a& : &E & \to & E_a\\
& & x & \mapsto &x_a\\
\end{array}$$

Let $\ast$ be a given singleton. Then there is only one application of domain $E$ to $\ast$ that we call $\pi_\emptyset$; we pose $\omega_\emptyset=p_\emptyset (\omega)$. The $\sigma$-algebra one considers implicitly on $\prod_{i\in I}E_i$ will be the Borel algebra with respect to the product topology, i.e. the smallest algebra that makes the projections, for any $a\subseteq I$, $\pi_a$ measurable, here as the $E_i$ are finite and $I$ is finite it coincides with the discrete $\sigma$-algebra on E. Let us denote $\mathcal{F}_a$ the smallest $\sigma$-algebra that makes $p_a$ measurable, i.e generated by the cylindrical events $\{\omega_a\}:=\{\omega_1 \in E \ : {\omega_1}_a=\omega_a\}$.\\

\begin{nota}
For any measurable space $(E,\mathcal{F})$ let us denote $\mathcal{M}(E,\mathcal{F})$ the set of measurable function and $\mathcal{M}_b(E,\mathcal{F})$ the set of bounded measurable functions, i.e. $\mathcal{L}^{\infty}(E,\mathcal{F})$. Let $\mathbb{P}(E)$ be the set of probability measures of $E$.

\end{nota}

\begin{nota}
We shall note the image measure ${p_a}_{*}\mathbb{P}$, i.e. the marginalisation of $\mathbb{P}$ over $E_{\overline{a}}$, as $\mathbb{P}_a$.
\end{nota}

\begin{defn}\label{Factor-subspaces}

For any $a\in \Pa(I)$, let $V(a)$ be the vector subspace of $V$ constituted of functions $f$ that can be factorised by $p_a$, in other words there is $\tilde{f}$ such that $f=\tilde{f}\circ p_a$. $V(a)$ is called the $a$-factor space, $V(a)=\mathcal{M}_b(E,\mathcal{F}_a)$.

\end{defn}

As $E$ is finite one can define for a probability $\mathbb{P}$ its support,

$$\Supp \mathbb{P}=\{\omega\in E\quad : \mathbb{P}(\omega)\neq 0\}$$

Let $\mathbb{P}$ be the probability measure on $E$ associated to the collection of random variables $I$, for any $a\subseteq I$, let $\pi_a:V\to V(a)$ be  such that $\pi_a= E_a[\quad| \mathcal{F}_a]$ the conditional expectation with respect to the $a$ factor space; we take as convention that for any cylinder events if $\mathbb{P}(\{\omega_a\})=0$, $E_a[\quad | \mathcal{F}_a](\omega)=0$; therefore for any $a\in \A$ and $f\in \mathcal{M}_b(E)$, 

\begin{equation}
E[f|\mathcal{F}_a]=1[\in \Supp \mathbb{P}_a]\sum_{\omega^{'}:\omega^{'}_a=\omega_a}\frac{\mathbb{P}(\omega^{'})}{\mathbb{P}_a(\omega_a)}f(\omega^{'})
\end{equation}

\subsection{Collection of projectors and functoriality}

\begin{nota}
 For a module $M$ we will denote $\vect M$ the poset of sub-modules of a module $M$. For two submodules $M_1,M_2\in \vect M$ such that $M_1\subseteq M_2$, we will denote $i^{M_1}_{M_2}$ the inclusion of $M_1$ into $M_2$. $\hom(\A,\vect M)$ will be the set of increasing functions between a poset $\A$ and $\vect M$; when $M$ is a vector space, $\vect M$ is also called the Grassmannian of $M$. A poset is a particular category with at most one morphism between two object; any increasing function $\F\in \hom(\A,\vect M)$ is in particular a functor.
 \end{nota}
 
\begin{nota}
We shall note the set of endomorphisms of $V$ as $\Li(V)$. For $\pi=(\pi_i\in \Li(V),i\in I)$ the collection $(\im \pi_i,i\in I)$ will be called the image of $\pi$ and denoted as $\im \pi$.
\end{nota}

\begin{prop}\label{image-functor-condition}
Let $\pi$ be a collection of projectors of $V$ over $\A$. $\im \pi\in \hom(\A,\vect{V})$ if and only if for any $a,b\in \A$ such that $b\leq a$, $\pi_a\pi_b=\pi_b$: 
\end{prop}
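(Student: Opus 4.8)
The plan is to reduce both implications to a single observation: for any projector $\pi_a$ its image coincides with its set of fixed points, i.e. $\im \pi_a = \{v\in V : \pi_a v = v\}$. This holds because $v=\pi_a w$ forces $\pi_a v = \pi_a^2 w = \pi_a w = v$, while every fixed point lies trivially in the image. I expect this elementary identity to carry the entire argument.

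For the forward direction, I would assume $\im\pi\in\hom(\A,\vect V)$, fix $b\leq a$, and take an arbitrary $v\in V$. Then $\pi_b v\in\im\pi_b$, and the increasing property yields $\im\pi_b\subseteq\im\pi_a$, so $\pi_b v\in\im\pi_a$; the fixed-point description then gives $\pi_a(\pi_b v)=\pi_b v$. Since $v$ is arbitrary this is exactly $\pi_a\pi_b=\pi_b$.

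For the converse, I would assume $\pi_a\pi_b=\pi_b$ whenever $b\leq a$ and take $v\in\im\pi_b$, say $v=\pi_b w$. Then $\pi_a v=\pi_a\pi_b w=\pi_b w=v$, so $v$ is fixed by $\pi_a$ and hence $v\in\im\pi_a$. This proves $\im\pi_b\subseteq\im\pi_a$ for all $b\leq a$, which is precisely the statement that $\im\pi$ is a morphism of posets $\A\to\vect V$.

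I anticipate no serious obstacle here: the only real content is the fixed-point characterisation of the image of a projector, after which each direction is a two-line verification. The one point to keep in mind is that the phrase \emph{collection of projectors} must be read in the idempotent sense $\pi_a^2=\pi_a$, since this is exactly what makes the fixed-point identity---and therefore the whole equivalence---work.
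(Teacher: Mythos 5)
Your proof is correct and takes essentially the same route as the paper: the paper's proof is a one-line assertion of both implications, and your fixed-point characterisation $\im\pi_a=\{v:\pi_a v=v\}$ is exactly the standard argument it leaves implicit (the paper spells out the same equivalence later, in the remark rewriting $\pi_1\pi_2=\pi_2$ as $(\id-\pi_1)\pi_2=0$). Nothing is missing.
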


\begin{proof}

Assume that $\im \pi_b\subseteq \im \pi_a$ for $b\leq a$ then $\pi_a\pi_b=\pi_b$; suppose $\pi_a\pi_b=\pi_b$ then $\im \pi_b \subseteq \im \pi_a$.
\end{proof}

\begin{rem}
When $\im \pi \in \hom(\A,\vect{V})$, let for $b\leq a$, $G(a)= \im \pi_a$ and $G^b_a=i^{\im \pi_b}_{\im \pi_a}$; $G$ is a functor from $\A$ to $\vect{V}$, where $\vect{V}$ has as morphisms inclusions; we shall call $G(\pi)$ the canonical functor associated to $\pi$.
\end{rem}

\begin{nota}
For $V$ a vector space, we shall note $\cat{sub}(V)$ the subcategory of subvector spaces of $V$; morphisms are any linear transformations from one subspace to another.

\end{nota}

\begin{defn}\label{inverse-system-homomorph}
Let $\A$ be a poset, $\pi=(\pi_a,a\in \A)$ a collection of endomorphisms of $V$. If for any $a,b\in\A$ such that $b\leq a$ there is $f^a_b\in \Li(V)$ such that, 

\begin{equation}\label{eq-inverse-system-homomorph}
\pi_b=f^a_b\circ \pi_a
\end{equation}

we will say that $\pi$ is presheafable in $\cat{sub}(V)$. 
\end{defn}

\begin{defn}
For any poset $\A$, we shall call the nerve of $\A$, denoted as $N(\A)$, the set of strictly increasing sequence of $\A$; in particular we shall call $N(\A)_n$ the set of strictly increasing sequence of $n$-elements.

\end{defn}

\begin{ex}
$N(\A)_2=\{(a,b)\in \A^2  : a<b\}$.

\end{ex}

\begin{nota}
Let $X,A,Y,B$ be four sets such that $A\subseteq X$ and $B\subseteq Y$. Let $f:X\to Y$ be a function such that $f(A)\subseteq B$, we shall note $f|^B_A$ is restriction on $A$ and $B$.
\end{nota}

\begin{prop}\label{inversable-1}
Let $\pi$ be presheafable in $\cat{sub}(V)$ and $(f^b_a, (a,b)\in N(\A)_1)$ a collection that satisfies Equation (\ref{eq-inverse-system-homomorph}); let $F(\pi)^a_a=id$ and $F(\pi)^a_b={f^a_b}|^{\im \pi_a}_{\im \pi_b}$ when $b\leq a$, $F(\pi)$ is a presheaf from $\A$ to $\cat{sub}(V)$ that we shall call the canonical presheaf associated to $\pi$. 

\end{prop}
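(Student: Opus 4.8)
The plan is to check directly that $F(\pi)$ satisfies the axioms of a contravariant functor from $\A$ to $\cat{sub}(V)$, with objects $F(\pi)(a)=\im\pi_a$: namely that each $F(\pi)^a_b$ is a well-defined morphism of $\cat{sub}(V)$, that identities are preserved, and that composition is respected. The single computational fact behind all three points is that, on $\im\pi_a$, any lift $f^a_b$ is forced to act as $\pi_b$ read through $\pi_a$.

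First I would prove well-definedness. Fix $b\leq a$ and $v\in\im\pi_a$, and write $v=\pi_a(u)$ for some $u\in V$. The defining relation $\pi_b=f^a_b\circ\pi_a$ gives $f^a_b(v)=f^a_b(\pi_a(u))=\pi_b(u)\in\im\pi_b$, so $f^a_b(\im\pi_a)\subseteq\im\pi_b$ and the restriction $F(\pi)^a_b$ is a genuine linear map $\im\pi_a\to\im\pi_b$, i.e. a morphism of $\cat{sub}(V)$ (whose morphisms are arbitrary linear transformations). The same computation shows this restriction is independent of the chosen lift: if $g^a_b$ also satisfies $\pi_b=g^a_b\circ\pi_a$, then $g^a_b(v)=\pi_b(u)=f^a_b(v)$ for every $v=\pi_a(u)\in\im\pi_a$, so $F(\pi)$ is canonically attached to $\pi$. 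For the identity axiom, $F(\pi)^a_a=\id$ holds by stipulation; for consistency one notes that any lift $f^a_a$ with $\pi_a=f^a_a\circ\pi_a$ restricts to the identity on $\im\pi_a$, since $f^a_a(\pi_a(u))=\pi_a(u)$.

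The substantive step is compatibility with composition: for $c\leq b\leq a$ I must show $F(\pi)^b_c\circ F(\pi)^a_b=F(\pi)^a_c$. Fixing $v=\pi_a(u)\in\im\pi_a$ and using the three relations $\pi_b=f^a_b\circ\pi_a$, $\pi_c=f^b_c\circ\pi_b$ and $\pi_c=f^a_c\circ\pi_a$ (all available since $\pi$ is presheafable and each of the pairs $c\leq b$, $b\leq a$, $c\leq a$ is comparable), I compute $F(\pi)^a_b(v)=\pi_b(u)$, then $F(\pi)^b_c(\pi_b(u))=f^b_c(\pi_b(u))=\pi_c(u)$, whereas $F(\pi)^a_c(v)=f^a_c(\pi_a(u))=\pi_c(u)$; both composites equal $\pi_c(u)$, so they agree. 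Together with the identity axiom this establishes functoriality, hence that $F(\pi)$ is a presheaf from $\A$ to $\cat{sub}(V)$.

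I do not expect a genuine obstacle here, as the statement is a structural verification rather than a theorem with hidden content. The only point worth flagging is that the individual lifts need not compose on all of $V$---in general one does \emph{not} have $f^b_c\circ f^a_b=f^a_c$ as endomorphisms of $V$---and it is precisely the factorizations through $\pi_a$ that force agreement once everything is restricted to $\im\pi_a$. This is why passing to the restrictions, rather than working with the lifts themselves, is essential to the construction.
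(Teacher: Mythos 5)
Your proof is correct and follows essentially the same route as the paper's: well-definedness via $f^a_b(\pi_a(u))=\pi_b(u)\in\im\pi_b$, composition checked by evaluating both sides on elements $\pi_a(u)$ (i.e.\ using surjectivity of $\pi_a$ onto its image), and canonicity from the fact that any two lifts agree on $\im\pi_a$. No meaningful difference in approach.
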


\begin{proof}
For any $(a,b)\in N(\A)_1$, $\im f^a_b(\im \pi_a)\subseteq \im \pi_b$ therefore $F^b_a$ is well defined. Furthermore for $a\geq b\geq c$ and $v\in V$, $F^b_cF^a_b\pi_a(v)=f^b_c\pi_b(v)=\pi_c(v)=F^a_c\pi_a(v)$; as $\pi_a$ is surjective on its image, $F^b_cF^a_b=F^a_c$.\\

Let $({f_1}^b_a,(a,b)\in N(\A)_1)$ that satisfies Equation (\ref{eq-inverse-system-homomorph}), for $b\leq a$, $f^a_b\pi_a=\pi_b={f_1}^a_b\pi_b$ therefore $F^a_b={F_1}^a_b$ which justifies why we can call $F$ canonical.

\end{proof}


\begin{prop}\label{presheaf-projector}
Let $\A$ be a poset, $\pi=(\pi_a,a\in \A)$ a collection of projectors of $V$. $\pi$ is presheafable in $\cat{sub}(V)$ if an only if for any for any $a,b\in N(\A)_1$, $\pi_a\pi_b=\pi_a$. 

\end{prop}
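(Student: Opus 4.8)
The plan is to read the equivalence off the defining relation of Definition~\ref{inverse-system-homomorph} together with the idempotency $\pi_c^2=\pi_c$; no further structure of $\cat{sub}(V)$ is needed, and in fact each projector will serve as its own factorization witness, which is why the statement is so clean.

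I would treat the two implications separately, starting from the product identity. Assume $\pi_a\pi_b=\pi_a$ for every comparable pair $a<b$, and fix $b\leq a$. If $b=a$ I take the factorization map to be $\id$. If $b<a$, the hypothesis applied to this very pair reads $\pi_b\pi_a=\pi_b$, so $\pi_b$ itself witnesses the relation: setting $f^a_b=\pi_b$ gives $f^a_b\pi_a=\pi_b\pi_a=\pi_b$, which is exactly Equation~(\ref{eq-inverse-system-homomorph}). Hence $\pi$ is presheafable in $\cat{sub}(V)$.

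Conversely, assume $\pi$ is presheafable and fix $a<b$. The defining relation, being contravariant, expresses the smaller index through the larger one, so it supplies some $f\in\Li(V)$ with $\pi_a=f\pi_b$. Composing on the right with $\pi_b$ and using $\pi_b^2=\pi_b$ gives
\[
\pi_a\pi_b=f\pi_b\pi_b=f\pi_b=\pi_a,
\]
the asserted identity. This closes the equivalence.

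The only point that requires genuine care is the order convention, rather than any substantive difficulty: presheafability amounts to the order-reversing kernel condition $\ker\pi_b\subseteq\ker\pi_a$ for $a<b$, which is dual to the image-increasing condition $\pi_a\pi_b=\pi_b$ of Proposition~\ref{image-functor-condition}. One must therefore keep the roles of the two indices straight and check that it is the product $\pi_a\pi_b$ (and not $\pi_b\pi_a$) that equals $\pi_a$. Once the orientation is fixed, both directions collapse to the one-line computations above, and restricting the witnesses $f^a_b=\pi_b$ to the images $\im\pi_a$ recovers precisely the canonical presheaf $F(\pi)$ of Proposition~\ref{inversable-1}.
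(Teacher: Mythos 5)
Your proof is correct and follows essentially the same route as the paper's: the forward direction composes the factorization $\pi_a=f\pi_b$ with $\pi_b$ and uses idempotency, and the converse takes the projector with the smaller index itself as the witness $f$. The closing remarks on the kernel condition and on recovering $F(\pi)$ are consistent with the paper's surrounding discussion but are not needed for the equivalence.
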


\begin{proof}
Let us assume that $\pi$ is presheafable, and let $(f^b_a, (a,b)\in N(\A)_1)$ be such that Equation (\ref{eq-inverse-system-homomorph}) holds for any $(a,b)\in N(\A)_1$. Let $(a,b)\in N(\A)_1$ and $v\in \im \pi_b$ then, $\pi_a\pi_b=f^b_a\pi_b \pi_b=f^b_a\pi_b=\pi_a$; let $\pi$ be such that for $(a,b)\in N(\A)_1$, $\pi_a\pi_b=\pi_a$ then by definition $\pi$ is presheafable ($f^b_a=\pi_a$).
\end{proof}


\begin{rem}
A presheafable collection of projectors over $\A$, doesn't generally satisfy $\im \pi \in \hom(\A,\vect{V})$ and a collection of projectors such that $\im \pi \in \hom(\A,\vect{V})$ is not necessarily presheafable: let $\pi_1,\pi_2$ two projectors, $\pi_1\pi_2=\pi_2$ is equivalent to having $\im \pi_2\subseteq \im \pi_1$, as one can rewrite $\pi_1\pi_2=\pi_2$ as $(id-\pi_1)\pi_2=0$;  $\pi_1\pi_2=\pi_1$ is equivalent to  $\ker \pi_2\subseteq \ker \pi_1$ as one can rewrite $\pi_1\pi_2=\pi_1$ as $\pi_1(id-\pi_2)=0$ which says that $\im(id-\pi_2)\subseteq \ker \pi_1$ and $\im(id-\pi_2)=\ker \pi_2$.
\end{rem}

\begin{prop}
Let $I$ be a finite set, $E=\prod_{i\in I}E_i$, $\mathbb{P}$ a probability measure on $E$; $\left(\pi_a=E[ \quad |\mathcal{F}_a],a\in \mathcal{P}(I)\right)$ is such that $\im\pi \in \hom(\A,\vect{V})$ and is presheafable.
\end{prop}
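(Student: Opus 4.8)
The plan is to reduce the statement to two operator identities and read each off from the nesting of the conditioning $\sigma$-algebras. Recall that $\mathcal{P}(I)$ is ordered by inclusion, so $b\leq a$ means $b\subseteq a$; since $p_b$ then factors through $p_a$, we have $\mathcal{F}_b\subseteq\mathcal{F}_a$ and, by Definition~\ref{Factor-subspaces}, $V(b)\subseteq V(a)$. I would first note that each $\pi_a=E[\,\cdot\mid\mathcal{F}_a]$ is idempotent: $\pi_a f$ depends on $\omega$ only through $\omega_a$ and is itself fixed by $\pi_a$, so $\pi_a^2=\pi_a$ and $\pi_a$ is a genuine projector, which is what lets me apply the earlier characterizations.

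For the image condition, by Proposition~\ref{image-functor-condition} it suffices to prove $\pi_a\pi_b=\pi_b$ whenever $b\leq a$. Here $\pi_b f$ is $\mathcal{F}_b$-measurable, hence $\mathcal{F}_a$-measurable because $\mathcal{F}_b\subseteq\mathcal{F}_a$, so conditioning it on the finer algebra $\mathcal{F}_a$ returns it unchanged, i.e. $\pi_a\pi_b f=\pi_b f$. For presheafability, by Proposition~\ref{presheaf-projector} it suffices to prove $\pi_a\pi_b=\pi_a$ for $(a,b)\in N(\A)_1$, i.e. for $a\subsetneq b$; now $\mathcal{F}_a\subseteq\mathcal{F}_b$ and the tower property $E[E[\,\cdot\mid\mathcal{F}_b]\mid\mathcal{F}_a]=E[\,\cdot\mid\mathcal{F}_a]$ gives exactly $\pi_a\pi_b=\pi_a$. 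Both required identities are thus instances of the nesting of conditional expectations.

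The step I expect to be the real obstacle is not the tower property itself but reconciling it with the support convention $E[\,\cdot\mid\mathcal{F}_a](\omega)=0$ on $\{\,\omega:\mathbb{P}_a(\omega_a)=0\,\}$, since the usual identities are only almost-sure statements and the extra indicator factor in the explicit formula can a priori spoil an equality off the support. The resolution is that the convention is vacuous on $\Supp\mathbb{P}$: if $\mathbb{P}(\omega)>0$ then $\mathbb{P}_a(\omega_a)\geq\mathbb{P}(\omega)>0$ for every $a$, so on $\Supp\mathbb{P}$ every marginal $\omega_a$ lies in $\Supp\mathbb{P}_a$ and all the indicators equal $1$. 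Reading the operators on functions over $\Supp\mathbb{P}$ (equivalently, $\mathbb{P}$-almost everywhere) and using that $\pi_b f$ is constant along the fibres of $p_a$ when $b\subseteq a$, the kernel formula for $\pi_a\pi_b$ collapses precisely to the two identities above. The careful bookkeeping of these support indicators is the only delicate part of the argument.
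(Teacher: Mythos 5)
Your proof follows the same route as the paper's: both halves are reduced, via Propositions \ref{image-functor-condition} and \ref{presheaf-projector}, to the composition identities $\pi_a\pi_b=\pi_b$ and $\pi_b\pi_a=\pi_b$ for $b\leq a$, and both are then read off from the nesting of the $\sigma$-algebras and the tower property. To that extent the argument is correct and matches the paper's.

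Where you add something is the discussion of the support convention, and your diagnosis that this is the delicate point is right; but the two identities behave differently there, and your uniform ``work on $\Supp\mathbb{P}$'' resolution is only needed for one of them and does not fully close it. The presheafability identity $E[E[f\mid\mathcal{F}_a]\mid\mathcal{F}_b]=E[f\mid\mathcal{F}_b]$ for $b\leq a$ holds \emph{exactly}, on all of $E$: in the outer sum the configurations $\omega'$ with $\mathbb{P}(\omega')=0$ carry zero weight, and on those with $\mathbb{P}(\omega')>0$ the inner indicator $1[\omega'_a\in\Supp\mathbb{P}_a]$ equals $1$, so no almost-everywhere identification is required. The image half is the genuinely problematic one: for $g$ an $\mathcal{F}_a$-measurable function the convention gives $E[g\mid\mathcal{F}_a]=g\cdot 1[p_a(\cdot)\in\Supp\mathbb{P}_a]$, hence $\pi_a\pi_b f=(\pi_b f)\cdot 1[p_a(\cdot)\in\Supp\mathbb{P}_a]$, which differs from $\pi_b f$ at points $\omega$ with $\omega_b\in\Supp\mathbb{P}_b$ but $\omega_a\notin\Supp\mathbb{P}_a$. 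Concretely, for $\mathbb{P}$ uniform on the diagonal of $\{0,1\}^2$, $b=\{1\}$, $a=I$ and $f\equiv 1$, one gets $\pi_b f\equiv 1$ while every element of $\im\pi_a$ vanishes off the diagonal, so $\im\pi_b\not\subseteq\im\pi_a$ as subspaces of $\mathcal{M}_b(E,\mathcal{F})$. Passing to functions modulo $\mathbb{P}$-null sets repairs this, as you say, but it replaces the ambient space $V$ fixed in the statement, where the containment is meant literally. The paper's own proof has the same soft spot (its computation $E[f\mid\mathcal{F}_a]=f\,1[\cdot\in\Supp\mathbb{P}_b]$ should have the indicator of $\Supp\mathbb{P}_a$, which is where the discrepancy hides), so your argument is on par with it; but neither establishes the image condition for the literal $V$ unless $\mathbb{P}$ has full support or one quotients by null functions.
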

\begin{proof}
 For any $f\in \M_b(E)$ and $a,b\in \A$ such that $b\leq a$, $E[E[f|\mathcal{F}_a]|\mathcal{F}_b]=E[f|\mathcal{F}_b]$ therefore by Proposition \ref{presheaf-projector} $\pi$ is presheafable; for $f\in \mathcal{F}_b$, $E[f|\mathcal{F}_a]=f1[.\in \Supp\mathbb{P}_b]$, $E[E[f|\mathcal{F}_b]|\mathcal{F}_a]= E[f|\mathcal{F}_b]1[.\in \Supp \mathbb{P}_b]=  E[f|\mathcal{F}_b]$, therefore by Proposition \ref{image-functor-condition}.
\end{proof}

\subsection{Decomposable collections of projectors implies decomposable collection of vector spaces}
\begin{nota}
Let $\A$ be a poset, let $\A^{\Plus}=\A\oplus 1$ be the poset sum of $\A$ and the one element poset $1$, i.e. any $a\in \A^{\Plus}$ is in $\A$ or is equal to $1$ and  $1$ is a final element. 
\end{nota}

\begin{defn}\label{decomposable}
Let $V$ be a vector space, $\A$ be a finite poset and $(\pi_a,a\in \A)$ a collection of endomorphisms of $V$. Let $\F(\pi)$ be the application from $\A^{\Plus}$ to $\vect{V}$ such that for any $a\in \A$, $\F(a)=\im \pi(a)$, $\F(1)=V$. We say that $(\pi_a,a\in \A)$ is decomposable if and only if there is a collection of vector subspaces of $V$, $(S_a,a\in \A^{\Plus})$ such that

\begin{enumerate}

\item $\bigoplus_{a\in\A^{\Plus}}i^{S_a}_V: \bigoplus_{a\in\A^{\Plus}}S_a\to V$ is an isomorphism; let us recall that for any $w=(w_a,a\in \A^{\Plus})\in \bigoplus_{a\in\A^{\Plus}}S_a$, $\bigoplus_{a\in\A^{\Plus}}i^{S_a}_V(w)= \sum_{a\in\A^{\Plus}}w_a$.

\item for any $a\in \A$, $\pi_a(\sum_{b\in \A^{\Plus}} w_b)=\sum_{b\leq a} w_b$. 
\end{enumerate}

We shall note $\bigoplus_{a\in\A^{\Plus}}i^{S_a}_V$ as $\phi$ and the projections on the $S_a$ as $s_a$, i.e.  for any $a\in \A^{\Plus}$, $s_a:V\to V$ is such that $s_a(v)=w_a$; we shall say that $(s_a,a\in \A)$ is the decomposition of $\pi$.

\end{defn}

Let us recall what a decomposable collection of vector spaces is. 

\begin{defn}\label{def-decompositio-1}
$(S_a, a\in \A)$ is a decomposition of $\W \in \hom(\A,\vect(V))$ if and only if, 

\begin{enumerate}
\item for all $a\in \A$, $S_a \in \vect(\F(a))$.

\item for all $a\in \A$, $\bigoplus_{b\leq a} i^{S_b}_{U(a)}:\bigoplus_{b\in \hat{a}} S_b \to \F(a)$ are isomorphisms and $\bigoplus_{a\in \A}i^{S_a}_V$ is an isomorphism on its image.

\end{enumerate}

We shall note $i^{S_b}_{U(a)}$ as $\phi_a$; we shall say that $\F$ is decomposable.
\end{defn}

\begin{prop}
Let $\pi=(\pi_a,a\in \A)$ be decomposable and $(S_a,a\in \A^{\Plus})$ the decomposition of $\pi$, let for any $a\in \A^{\Plus}$, $\phi_a: \bigoplus_{b\leq a}S_a\to \F(\pi)(a)$ be such that $\phi_a(\bigoplus_{b\leq a}w_b)=\sum_{b\leq a}w_b$. Then $(\phi_a,a\in \A)$ is a natural transformation from $\bigoplus S^a$ to $G(\pi)$ and a natural transformation from $\bigoplus S_a$ to $F(\pi)$, it is also an isomorphism, $\F(\pi)$ is decomposable and $(\pi_a,a\in \A)$ is presheafable.

\end{prop}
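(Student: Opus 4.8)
The plan is to read everything off the single defining identity of the decomposition, namely item (2) of Definition~\ref{decomposable}: writing a generic $v\in V$ as $v=\sum_{c\in\A^{\Plus}}w_c$ with $w_c=s_c(v)\in S_c$, one has $\pi_a(v)=\sum_{b\leq a}w_b$ for every $a\in\A$. First I would record the two composition rules this forces on the projectors. For $b\leq a$ one gets $\pi_a\pi_b=\pi_b$ and $\pi_b\pi_a=\pi_b$, since pre- or post-composing the projection onto $\bigoplus_{c\leq a}S_c$ with the projection onto $\bigoplus_{c\leq b}S_c$ simply retains the smaller set of components $\{c:c\leq b\}$. The identity $\pi_a\pi_b=\pi_b$ for $b\leq a$ gives, via Proposition~\ref{image-functor-condition}, that $\im\pi\in\hom(\A,\vect V)$, so the canonical functor $G(\pi)$ is defined; the identity $\pi_b\pi_a=\pi_b$ (equivalently $\pi_a\pi_b=\pi_a$ for $a\leq b$) gives, via Proposition~\ref{presheaf-projector}, that $\pi$ is presheafable, with transition maps $f^a_b=\pi_b$, so the canonical presheaf $F(\pi)$ of Proposition~\ref{inversable-1} is defined. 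This already settles the presheafability assertion and legitimizes the two target (co)presheaves.

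Next I would identify the image spaces: since the $w_c$ range independently over the $S_c$, one has $\im\pi_a=\{\sum_{b\leq a}w_b : w_b\in S_b\}=\bigoplus_{b\leq a}S_b$ inside $V$, and this identification is exactly the map $\phi_a$. Because $\phi=\bigoplus_{c\in\A^{\Plus}}i^{S_c}_V$ is an isomorphism by item (1) of Definition~\ref{decomposable}, its restriction $\phi_a$ to the summands indexed by $\hat a$ is injective with image $\sum_{b\leq a}S_b=\im\pi_a=\F(\pi)(a)$; hence each $\phi_a$ is an isomorphism, and for the top element $\phi_1=\phi$ is an isomorphism onto $\F(\pi)(1)=V$.

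Then comes the naturality check, the only place where care is needed, since the two sides carry opposite variance. On the functor side, for $b\leq a$ one compares, on $\bigoplus_{c\leq b}S_c$, the two composites in the square whose horizontal arrows are the inclusion $\bigoplus_{c\leq b}S_c\hookrightarrow\bigoplus_{c\leq a}S_c$ and $G(\pi)^b_a=i^{\im\pi_b}_{\im\pi_a}$; both composites send $(w_c)_{c\leq b}$ to $\sum_{c\leq b}w_c$, so the square commutes. On the presheaf side, for $b\leq a$ one compares, on $\bigoplus_{c\leq a}S_c$, the composites through the projection $\bigoplus_{c\leq a}S_c\twoheadrightarrow\bigoplus_{c\leq b}S_c$ and through $F(\pi)^a_b$, which is the restriction of $f^a_b=\pi_b$; both send $(w_c)_{c\leq a}$ to $\sum_{c\leq b}w_c$, using $\pi_b(\sum_{c\leq a}w_c)=\sum_{c\leq b}w_c$. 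Hence $(\phi_a)$ is natural in each variance, and being objectwise an isomorphism it is a natural isomorphism.

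Finally, decomposability of $\F(\pi)$ in the sense of Definition~\ref{def-decompositio-1} is read off from what precedes: for each $a$ one has $S_a\subseteq\bigoplus_{b\leq a}S_b=\F(\pi)(a)$, so $S_a\in\vect(\F(\pi)(a))$; the maps $\bigoplus_{b\leq a}i^{S_b}_{\F(\pi)(a)}$ are exactly the isomorphisms $\phi_a$ just produced; and $\bigoplus_{a}i^{S_a}_V=\phi$ is an isomorphism, in particular injective on its image. The main obstacle, such as it is, is organizational rather than computational: one must establish presheafability and image-functoriality \emph{before} invoking $F(\pi)$ and $G(\pi)$, and must track the opposite variances so that the correct transition is used in each naturality square ($f^a_b=\pi_b$ on the presheaf side, the inclusion on the functor side).
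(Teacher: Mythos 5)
Your proof is correct and follows essentially the same route as the paper: both rest on the observation that, under the isomorphism $\phi$, each $\pi_a$ becomes the canonical projection onto $\bigoplus_{b\leq a}S_b$ followed by the inclusion, from which the identification $\im\pi_a=\bigoplus_{b\leq a}S_b$, the naturality of $(\phi_a)$, the decomposability of $\F(\pi)$, and presheafability all follow. You merely make explicit what the paper compresses into ``from this remark'' --- in particular the composition identities $\pi_a\pi_b=\pi_b$ and $\pi_b\pi_a=\pi_b$ for $b\leq a$ and the appeal to Propositions \ref{image-functor-condition} and \ref{presheaf-projector} --- which is a welcome expansion rather than a different argument.
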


\begin{proof}

For any $v\in \underset{b\in \A^{\Plus}}{\bigoplus}S_b$, and $a\in \A$, $\phi^{-1}\pi_a(\phi(v))=i^{\underset{b\leq a}{\bigoplus}S_b}_{\underset{b\in \A^{\Plus}}{\bigoplus}S_b}pr^{\underset{b\in \A^{ \Plus }}{\bigoplus}S_b}_{\underset{b\leq a}{\bigoplus}S_b}(v)$ therefore $\phi(i^{\underset{b\leq a}{\bigoplus}S_b}_{\underset{b\in \A^{\Plus}}{\bigoplus}S_b}\underset{b\leq a}{\bigoplus}S_b)=\F(a)$ and $\phi_a=\phi i^{\underset{b\leq a}{\bigoplus}S_b}_{\underset{b\in \A^{\Plus}}{\bigoplus}S_b}|^{\F(a)}$ is well defined and is an isomorphism; from this remark one can conclude that $(\phi_a,a\in \A)$ is a natural transformation from $\bigoplus S^a$ to $G(\pi)$ and a natural transformation from $\bigoplus S_a$ to $F(\pi)$, it is also an isomorphism, $\F(\pi)$ is decomposable. $(\pi_a,a\in \A)$ is presheafable because $\pr^{\underset{b\in \A^{\Plus}}{\bigoplus}S_b}_{\underset{b\leq a}{\bigoplus}S_b}$ is presheafable.\\

\end{proof}


\begin{defn}[Meet semi-lattice]
Let $\A$ be a poset, $a,b\in\A$. We shall say that $\A$ has a meet for $(a,b)$ when there is $d$ such that,

$$\forall c\in \A,\quad  c\leq a \quad \& \quad c\leq b \implies c\leq d$$

$d$ is unique and we shall note it $a\cap b$. We will call meet semi-lattice any poset that has all meets for any couple.

\end{defn}

\begin{proof}
Let $d$ and $d_1$ be two intersections for $a,b$, then $d\leq a$, $d\leq b$ and $d\leq d_1$ and by exchanging $d$ and $d_1$ one gets $d_1\leq d$ and therefore $d=d_1$.
\end{proof}

\begin{ex}
Let $I$ be any set then $\Pa(I)$ is a meet semi-lattice.

\end{ex}

\subsection{Necessary condition}

\begin{prop}\label{decomposable-to-intersection}
Let $(\pi_b,b\in \A)$ be decomposable and $(s_b,b\in \A)$ its decomposition; let $v\in V$, $w\in \underset{b\in \A^{\Plus}}{\bigoplus}S_b$ such that $v=\underset{b\in \A^{\Plus}}{\sum}w_b$. For any $a,b\in\A$, $s_as_b=\delta_a(b) s_a$ and,

$$\pi_a\pi_b=\underset{\substack{c\leq a \\ c\leq b}}{\sum} s_c$$

If $\A$ is a meet semi-lattice then,

\begin{equation}\tag{I}
\forall a,b\in\A\quad  \pi_a\pi_b=\pi_{a\cap b}
\end{equation}
\end{prop}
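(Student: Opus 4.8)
The plan is to derive all three identities directly from Definition \ref{decomposable}, using only that $\phi=\bigoplus_{a\in\A^{\Plus}}i^{S_a}_V$ is an isomorphism together with the defining relation (2). Every claim is an identity of endomorphisms of $V$, so it suffices to evaluate each side on an arbitrary $v\in V$, written through the unique decomposition $v=\sum_{b\in\A^{\Plus}}w_b$ with $w_b\in S_b$ furnished by $\phi^{-1}$. Under this decomposition the $s_a$ are exactly the coordinate projections $s_a(v)=w_a$, so the first identity is the standard fact that the projections attached to a direct sum are mutually orthogonal idempotents, and the remaining two are purely combinatorial consequences.

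First I would establish $s_as_b=\delta_a(b)s_a$. Since $w_b=s_b(v)$ lies in the single summand $S_b$, its own $\phi$-decomposition has all components zero except the $b$-th; hence $s_a(w_b)=\delta_a(b)\,w_b$, and so $s_as_b(v)=\delta_a(b)\,w_b=\delta_a(b)s_a(v)$. As $v$ is arbitrary this gives $s_as_b=\delta_a(b)s_a$.

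Next I would rewrite condition (2) of Definition \ref{decomposable} as the operator identity $\pi_a=\sum_{c\leq a}s_c$; here the sum is over $c\in\A^{\Plus}$ with $c\leq a$, but since $a\in\A$ and $1$ is a final element of $\A^{\Plus}$ we have $\{c\in\A^{\Plus}:c\leq a\}=\hat a\subseteq\A$, so the sum runs over $c\in\A$ with $c\leq a$. Expanding the product and invoking the orthogonality from the previous step,
$$\pi_a\pi_b=\Big(\sum_{c\leq a}s_c\Big)\Big(\sum_{d\leq b}s_d\Big)=\sum_{c\leq a}\sum_{d\leq b}\delta_c(d)s_c=\sum_{\substack{c\leq a\\ c\leq b}}s_c,$$
which is the second identity; the only point requiring attention is that the double sum collapses onto its diagonal $c=d$, whose surviving indices are precisely those $c$ with $c\leq a$ and $c\leq b$.

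Finally, when $\A$ is a meet semi-lattice, the universal property of $a\cap b$ gives the set equality $\{c\in\A:c\leq a\text{ and }c\leq b\}=\{c\in\A:c\leq a\cap b\}$: the inclusion $\supseteq$ follows from $a\cap b\leq a$, $a\cap b\leq b$ and transitivity, while $\subseteq$ is the defining property of the meet. Substituting this into the second identity yields $\pi_a\pi_b=\sum_{c\leq a\cap b}s_c=\pi_{a\cap b}$. I do not expect a genuine obstacle here; the only care needed is the index bookkeeping, namely confirming that the summation set in condition (2) restricts to $\hat a$ and that the orthogonality relation is applied correctly to collapse the double sum.
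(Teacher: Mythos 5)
Your proposal is correct and follows essentially the same route as the paper's proof: the orthogonality $s_as_b=\delta_a(b)s_a$ is read off from the direct-sum decomposition, the double sum $\bigl(\sum_{c\leq a}s_c\bigr)\bigl(\sum_{d\leq b}s_d\bigr)$ is collapsed onto its diagonal, and the meet case follows from the set equality $\{c: c\leq a,\ c\leq b\}=\{c: c\leq a\cap b\}$. The only difference is that you spell out the justification of the orthogonality relation and the $\A^{\Plus}$ versus $\A$ index bookkeeping, which the paper leaves implicit.
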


\begin{proof}

By definition $s_as_b=\delta_a(b)s_b$,

$$\pi_a\pi_b=\underset{d\leq a}{\sum}s_d(\underset{c\leq b}{\sum} s_c)=\underset{d\leq a}{\sum}\underset{c\leq b}{\sum} \delta_d(c) s_c=\underset{\substack{c\leq a \\ c\leq b}}{\sum} s_c$$

When $\A$is a meet semi-lattice $\pi_{a\cap b}=\underset{c\leq a\cap b}{\sum} s_c=\underset{\substack{c\leq a \\ c\leq b}}{\sum} s_c$.

\end{proof}


We have defined what decomposability for a collection of endomorphisms is and shown that decomposable collections satisfy a meet or intersection property (I) when the poset is a meet semi-lattice. In the next section we will show that collections that satisfy the intersection property are decomposable and will characterize the probability measures, $\mathbb{P}$, for which $(E_{\mathbb{P}}[.|\mathcal{F}_a],a\in \mathcal{P}(I))$ admits an iteraction decomposition, i.e is decomposable.

\section{Necessary and sufficient condition for the interaction decomposition of projectors from a finite poset to $\cat{Vect}$}\label{chapitre-3-section-2}

\subsection{Zeta function and Mobius inversion with coefficient in modules}

\begin{defn}
A poset $\A$ is locally finite if for any $b\leq a$, $[b,a]=\{c\in\A :\quad b\leq c\leq a\}$ is finite.

\end{defn}

\begin{defn}[Zeta function]
Let $\A$ be a locally finite poset, let $\zeta:\underset{a\in \A}{\bigoplus}\Z\to \underset{a\in \A}{\bigoplus}\Z$ be such that for any $\lambda \in \underset{a\in \A}{\bigoplus}\Z$ and $a\in \A$,

$$\zeta(\lambda)(a)=\underset{b\leq a}{\sum} \lambda_b$$

$\zeta$ is the zeta function of $\A$.
\end{defn}

\begin{prop}[Mobius inversion]\label{Mobius-inversion}
The zeta function of a locally finite poset $\A$ is invertible, we shall note $\mu$ its inverse and there is $f:\A\times \A\to \mathbb{Z}$ such that for any $\lambda\in \bigoplus_{a\in \A}\Z$ and $a\in\A$, 

$$\mu(\lambda)(a)=\underset{b\leq a}{\sum}f(a,b)\lambda_b$$

We shall note $f$ as $\mu_\A$. 

\end{prop}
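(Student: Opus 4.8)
The plan is to realize $\zeta$ as an element of the incidence algebra of $\A$ and to invert it there, using local finiteness to reduce everything to finite, triangular sums.

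First I would introduce the incidence algebra $\mathcal{I}(\A)$: the $\Z$-valued functions $g$ on pairs $(a,b)$ with $b\leq a$, with convolution $(g\ast h)(a,c)=\sum_{c\leq b\leq a}g(a,b)h(b,c)$ and unit $\delta(a,b)=1[a=b]$. Local finiteness is exactly what makes each interval $[c,a]$ finite, so every convolution sum is finite and $\mathcal{I}(\A)$ is a unital ring. The zeta function is the operator induced by the element $z(a,b)=1[b\leq a]$ under the assignment $g\mapsto T_g$, $T_g(\lambda)(a)=\sum_{b\leq a}g(a,b)\lambda_b$ (each sum finite since $\lambda$ has finite support). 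A direct computation gives $T_gT_h=T_{g\ast h}$ and $T_\delta=\id$, so $g\mapsto T_g$ is a ring homomorphism and $\zeta=T_z$. It therefore suffices to invert $z$ inside $\mathcal{I}(\A)$.

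Next I would split $z=\delta+\eta$ with $\eta(a,b)=1[b<a]$ and show $\eta$ is locally nilpotent: an induction identifies $\eta^{k}(a,b)$ with the number of strictly increasing chains of length $k$ joining $b$ to $a$, and since $[b,a]$ is finite no such chain exists once $k$ exceeds its maximal chain length. Hence for every fixed pair $(a,b)$ the series $\mu_\A:=\sum_{k\geq 0}(-1)^{k}\eta^{k}$ has only finitely many nonzero terms, defining an integer-valued element of $\mathcal{I}(\A)$; the telescoping identities $(\delta+\eta)\ast\mu_\A=\delta=\mu_\A\ast(\delta+\eta)$ show that $\mu_\A$ is a two-sided convolution inverse of $z$. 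Applying the homomorphism $T$ to $z\ast\mu_\A=\mu_\A\ast z=\delta$ gives $\zeta\circ T_{\mu_\A}=T_{\mu_\A}\circ\zeta=\id$, so $\zeta$ is invertible with inverse $\mu:=T_{\mu_\A}$, and by construction $\mu(\lambda)(a)=\sum_{b\leq a}\mu_\A(a,b)\lambda_b$, which is the asserted form with $f=\mu_\A$.

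The one genuinely delicate point, and the only place local finiteness is essential, is the well-definedness and two-sidedness of the inverse: one must know that each convolution and each value of the series $\sum_{k\geq 0}(-1)^k\eta^{k}$ is a finite sum, and that the left inverse is automatically a right inverse. The incidence-algebra formulation settles both at once via local nilpotence of $\eta$. If an elementary argument is preferred, the same kernel is pinned down by the downward recursion $f(a,a)=1$ and $f(a,b)=-\sum_{b<c\leq a}f(a,c)$ for $b<a$ (well-founded because $[b,a]$ is finite); expanding $\mu\circ\zeta$ then reduces the claim to the identity $\sum_{b\leq c\leq a}f(a,c)=1[a=b]$, which is precisely this recursion, after which triangularity promotes the resulting left inverse to a genuine two-sided inverse.
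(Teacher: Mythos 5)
Your argument is correct, and it is worth saying up front that the paper does not actually prove this proposition: its entire proof reads ``By applying Proposition 2 \cite{Rota}.'' What you have written is, in substance, the standard argument behind that citation, so the two routes coincide once the reference is unfolded --- Rota constructs $\mu$ by exactly the mechanism you describe, namely inverting $z=\delta+\eta$ in the incidence algebra (equivalently, via the downward recursion $f(a,a)=1$, $f(a,b)=-\sum_{b<c\leq a}f(a,c)$, which you also give as the elementary variant). Your two key verifications are sound and correctly locate where local finiteness enters: the identification $T_gT_h=T_{g\ast h}$ and the telescoping identity $(\delta+\eta)\ast\sum_{k\geq 0}(-1)^k\eta^k=\delta$ both involve only sums over a finite interval $[b,a]$, and local nilpotence of $\eta$ follows from the bound on chain lengths in such an interval. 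The one caveat, which affects the paper's statement as much as your proof, is that for an infinite locally finite poset the operator $\zeta$ need not map $\bigoplus_{a\in\A}\Z$ into itself (take $\A=\N$ with its usual order and $\lambda$ supported at $0$: then $\zeta(\lambda)$ is the all-ones function), so ``invertible'' should be read either on $\prod_{a\in\A}\Z$ or, as in all of the paper's applications, for finite posets; the kernel identities $z\ast\mu_\A=\mu_\A\ast z=\delta$ that you establish are the correct invariant content in either reading.
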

\begin{proof}
By applying Proposition 2 \cite{Rota}.

\end{proof}

\begin{defn}
Let $\A$ be a locally finite poset and $M$ a ($R$-)module; let $M_\A=\underset{a\in \A}{\bigoplus}M$. The $zeta$ function of $\A$ with values in $M$, $\zeta_{\A}(M): M_\A\to M_\A$, is such that for any $m\in M_\A$,

\begin{equation}
\zeta_{\A}(M)(m)(a)= \underset{b\leq a}{\sum}m_a
\end{equation}

We shall note $\zeta_\A(M)$ as $\zeta_\A$ making the reference to $M$ implicit.

\end{defn}

\begin{prop}\label{mobius-inversion}
Let $\A$ be a locally finite poset, $M$ a module, $\zeta_\A(M)$ is invertible, we shall call its inverse the Mobius function with values in $M$ and note it as $\mu_{\A}(M): M_{\A}\to M_{\A}$. Furthermore,for any $m\in M_\A$ and $a\in \A$,

\begin{equation}
\mu_\A(m)(a)=\underset{b\leq a}{\sum}\mu_\A(a,b)m_a
\end{equation}

\end{prop}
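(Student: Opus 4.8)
The plan is to avoid redoing any combinatorics and instead to transfer the integer-valued Möbius inversion of Proposition \ref{Mobius-inversion} to the module $M$ through its underlying $\Z$-module structure. Concretely, I would use the integer coefficients $\mu_\A(a,b)\in\Z$ supplied by Proposition \ref{Mobius-inversion} to define a candidate inverse $\nu:M_\A\to M_\A$ by $\nu(m)(a)=\sum_{b\leq a}\mu_\A(a,b)\,m_b$. This is well defined because $m\in M_\A=\bigoplus_{a\in\A}M$ has finite support, so each value $\nu(m)(a)$ is a finite $\Z$-linear combination of elements of $M$; that $\nu(m)$ again lies in $M_\A$ follows exactly as for the integer map $\mu$ in Proposition \ref{Mobius-inversion}. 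The map $\nu$ is the ``module-coefficient'' analogue of $\mu$, and it is precisely the map whose formula appears in the statement.

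Next I would extract from Proposition \ref{Mobius-inversion} the two scalar identities that encode $\mu\zeta=\id$ and $\zeta\mu=\id$ on $\bigoplus_{a\in\A}\Z$. Evaluating these equalities on the indicator sequences, and using local finiteness so that every interval $[b,a]$ is finite, gives for all comparable $b\leq a$
\begin{equation}
\sum_{b\leq c\leq a}\mu_\A(c,b)=\delta_a(b),\qquad \sum_{c\leq b\leq a}\mu_\A(a,b)=\delta_a(c).
\end{equation}
These are identities between integers. I would then verify the two composites directly. For one direction, $\zeta_\A(M)(\nu(m))(a)=\sum_{c\leq a}\sum_{b\leq c}\mu_\A(c,b)\,m_b$; reindexing the finite double sum over the intervals $[b,a]$ and pulling the integer coefficients out of the scalar action gives $\sum_{b\leq a}\big(\sum_{b\leq c\leq a}\mu_\A(c,b)\big)m_b=\sum_{b\leq a}\delta_a(b)\,m_b=m_a$. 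The symmetric computation, using the second identity above, yields $\nu(\zeta_\A(M)(m))=m$. Hence $\zeta_\A(M)$ is invertible with inverse $\nu$, which is the asserted formula for $\mu_\A(M)$.

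The single point requiring care, and the conceptual heart of the argument, is that the integer inversion is ``universal'': the crucial step is exactly the distributivity $\sum_i n_i\,m=(\sum_i n_i)\,m$ for $n_i\in\Z$ and $m\in M$, which is what lets the purely numerical identities drive the module computation. The clean way to phrase this is that $M_\A\cong\big(\bigoplus_{a\in\A}\Z\big)\otimes_\Z M$, under which $\zeta_\A(M)$ and $\nu$ become $\zeta\otimes\id_M$ and $\mu\otimes\id_M$; invertibility is then immediate from functoriality of $-\otimes_\Z M$ applied to the isomorphism $\zeta$ of Proposition \ref{Mobius-inversion}. I do not expect a genuine obstacle here: beyond this transfer principle, everything reduces to finite reindexing justified by the finite support of elements of the direct sum and the local finiteness of $\A$.
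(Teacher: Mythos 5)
Your proposal is correct and follows essentially the same route as the paper: define the candidate inverse by the formula $\nu(m)(a)=\sum_{b\leq a}\mu_\A(a,b)\,m_b$ with the integer Möbius coefficients from Proposition \ref{Mobius-inversion}, then verify both composites by exchanging the order of the finite double sum and invoking the defining identity $\sum_{b\leq c\leq a}\mu_\A(c,b)=\delta_a(b)$. Your closing remark identifying the map as $\mu\otimes\id_M$ under $M_\A\cong(\bigoplus_{a\in\A}\Z)\otimes_\Z M$ is a clean way to package the same transfer principle, but it is not a different argument.
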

\begin{proof}

For any $m\in M$ and $b\in \A$, $\zeta_\A\mu_\A(m)(b)=\underset{c\leq b}{\sum}\underset{a\leq c}{\sum}\mu_\A(c,a) m_a$ and $\underset{a}{\sum}\underset{c: a\leq c\leq b}{\sum}\mu_\A(c,a) m_a= \underset{a}{\sum}\delta_b(a) m_a=m_b$ and similarly  $\mu_\A\zeta_\A=id$.

\end{proof}

\begin{defn}

Let $\A$ be a finite poset, let $(\pi_a,a\in\A)$ be a collection of endomorphism of $V$. Let $\Pi((\pi_a,a\in\A)):V\to V_{\A}$ be such that for any $v\in V$, $\Pi(v)(a)=\pi_a(v)$. For any $a\in \A$ and $v\in V$ let $s_a(v)=[\mu_\A\circ \Pi(v)](a)$, as $\zeta_{\A}\mu_\A\Pi=\Pi$,

\begin{equation}
\pi_a(v)=\underset{b\leq a}{\sum} s_b(v)
\end{equation}

\end{defn}

\begin{rem}
For $\A$ locally finite, $(s_a(v),a\in \A)$ is in general not in $V_\A$ but in $\prod_{a\in \A} V$, therefore we decide to restrict our attention to $\A$ finite for the moment.

\end{rem}

\subsection{Zeta function and functoriality }

\begin{prop}\label{commutation-zeta}
Let $\A$ be a locally finite poset, if $\B\in \U(\A)$, then one has the following commutation relations,
\begin{equation}
\zeta_{\B}\pr^{\A}_{\B}=\pr^{\A}_{\B}\zeta_{\A}
\end{equation}

\begin{equation}
\mu_{\B}\pr^{\A}_{\B}=\pr^{\A}_{\B}\mu_{\A}
\end{equation}

\end{prop}
\begin{proof}
Let $\B \in \U(\A)$, $b\in \B$ and $v\in V_\A$, 

$$\zeta_\B(pr^\A_\B(v))(b)=\underset{\substack{c\in \B\\ c\leq b }}{\sum}v_c=\underset{\substack{c\in \A \\ c\leq b }}{\sum}v_c= \zeta_\A(v)(b)$$

Therefore $\zeta_\B \pr^{\A}_{\B}=\pr^{\A}_{\B}\zeta_\A$, therefore $\pr^{\A}_{\B}=\mu_{\B}\pr^{\A}_{\B}\zeta_\A$ and $\pr^{\A}_{\B}\mu_\A=\mu_\B \pr^{\A}_{\B}$.

\end{proof}

\begin{defn}
Let $\A$ be a any poset and $\B\in \U(\A)$,

\begin{equation}
W_\A(\B)=\{v\in V_{\A} : \forall a, c \in \A,\quad \hat{a}\cap \B= \hat{c}\cap \B \implies v_a=v_c\}
\end{equation}

\end{defn}

\begin{ex}
Let $\A=\{0,1,1^{'}\}$ where $0\leq 1$ and $0\leq 1^{'}$; $W_{\hat{0}}=\{(v,v,v): \quad v\in V\}$.\\

\end{ex}

\begin{prop}\label{inclusion-eta}
Let $\A$ be a locally finite poset, for any $\B\in\U(\A)$, $\zeta_{\A}(V_\B)\subseteq W_{\A}(\B)$. If there is $b\in \A$ such that $\B=\hat{b}$ and $\A$ its a meet semi-lattice $\zeta_{\A}(V_\B)= W_{\B}$
\end{prop}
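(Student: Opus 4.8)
The plan is to treat the two assertions separately. The inclusion $\zeta_\A(V_\B)\subseteq W_\A(\B)$ is the easy half and holds for an arbitrary lower set $\B$; the reverse inclusion is where the hypotheses $\B=\hat b$ and ``$\A$ a meet semi-lattice'' are genuinely used.

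For the inclusion, I would take $v\in V_\B$, viewed inside $V_\A$ by extension by zero (so $v_c=0$ for $c\notin\B$). Then for every $a\in\A$,
$$
\zeta_\A(v)(a)=\sum_{c\leq a}v_c=\sum_{c\in\hat a\cap\B}v_c,
$$
which depends on $a$ only through the subset $\hat a\cap\B$. Hence $\hat a\cap\B=\hat c\cap\B$ forces $\zeta_\A(v)(a)=\zeta_\A(v)(c)$, which is exactly the defining condition of $W_\A(\B)$. This gives $\zeta_\A(v)\in W_\A(\B)$ for all such $v$, hence the inclusion, with no hypothesis on $\B$ beyond being a lower set.

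For the reverse inclusion I would first exploit the meet semi-lattice structure to record the identity $\hat a\cap\hat b=\widehat{a\cap b}$, which is immediate from the definition of the meet. Comparing $a$ with $a\cap b$, and using that $a\cap b\leq b$ (so that $\widehat{(a\cap b)}\cap\hat b=\widehat{a\cap b}=\hat a\cap\hat b$), the membership condition of $W_\A(\hat b)$ collapses to the clean identity
$$
w_a=w_{a\cap b}\qquad\text{for all }a\in\A.
$$
I would then construct a preimage explicitly: set $v'=\mu_{\hat b}\bigl(\pr^\A_{\hat b}(w)\bigr)\in V_{\hat b}$ and let $v$ be its extension by zero to $\A$. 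The verification $\zeta_\A(v)=w$ is pointwise,
$$
\zeta_\A(v)(a)=\sum_{c\in\hat a\cap\hat b}v'_c=\sum_{c\in\widehat{a\cap b}}v'_c=\zeta_{\hat b}(v')(a\cap b)=\bigl(\pr^\A_{\hat b}w\bigr)(a\cap b)=w_{a\cap b}=w_a,
$$
where the first three equalities use $\hat a\cap\hat b=\widehat{a\cap b}$ and $a\cap b\in\hat b$, the fourth uses $\zeta_{\hat b}\mu_{\hat b}=\id$ from Proposition \ref{mobius-inversion}, and the last uses the displayed identity. This yields $W_\A(\hat b)\subseteq\zeta_\A(V_{\hat b})$ and hence equality.

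The step I expect to carry the real weight is the collapse of the $W$ condition to $w_a=w_{a\cap b}$: this is the only place the meet semi-lattice hypothesis enters, and it is precisely what lets the Möbius inverse computed inside the lower set $\hat b$ reassemble, under $\zeta_\A$, into $w$ over all of $\A$. Restricting to $\B=\hat b$ rather than a general lower set is essential here, since one needs the meet $a\cap b$ to land back in $\B$ so that $\zeta_{\hat b}(v')$ can be evaluated at it; for a lower set without a top element this reduction breaks down. The remaining bookkeeping — the extension-by-zero identifications, and the option of replacing the pointwise check on the components indexed by $\hat b$ with the commutation relation $\zeta_{\hat b}\pr^\A_{\hat b}=\pr^\A_{\hat b}\zeta_\A$ of Proposition \ref{commutation-zeta} — is routine.
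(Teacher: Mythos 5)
Your proof is correct and follows essentially the same route as the paper's: both halves rest on the computation $\zeta_\A(v)(a)=\sum_{c\in\hat a\cap\B}v_c$ and, for the reverse inclusion, on the identity $\hat a\cap\hat b=\widehat{a\cap b}$, which forces $w_a=w_{a\cap b}$ for $w\in W_\A(\hat b)$. The only cosmetic difference is that you construct the preimage explicitly as $\mu_{\hat b}\bigl(\pr^\A_{\hat b}(w)\bigr)$ extended by zero and verify $\zeta_\A(v)=w$ directly, whereas the paper takes the global preimage $v=\mu_\A(w)$ and uses injectivity of $\zeta_\A$ to conclude that $v$ is supported on $\hat b$; by Proposition \ref{commutation-zeta} these are the same element.
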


\begin{proof}

Let $v\in V_\B$, for any $a\in \A$, $\zeta_{\A}(v)(a)=\underset{c\in \hat{a}\cap \B}{\sum} v_a$. Therefore if $\hat{a}\cap \B= \hat{b}\cap \B$, $\zeta_{\A}(v)(a)=\zeta_{\A}(v)(b)$.\\

Let $u\in W_{\A}(\hat{b})$ and $v\in V_{\A}$ such that $u=\zeta_{\A}(v)$, for any $a\in \A$, $\zeta_{\A}(v1[.\in \hat{b}])(a)=\underset{c\in \hat{a}\cap \hat{b}}{\sum}v_c=u_{a\cap b}=u_a=\zeta_{\A}(v)(a)$; therefore $v=v1[.\in \hat{b}]$.\\

\end{proof}

\begin{rem}\label{counter-example-inclusion-zeta}
Let us remark that $\zeta_\A(V_\B)$ is not equal to $W_{\A}(\B)$ in general; let us consider $\A=\{a,b,c,d\}$, with, $a\leq c$, $a\leq d$, $b\leq c$, $b\leq d$. Let $\B=\{a,b\}$, asking for $v\in W_{\A}(\B)$ is the same than asking for $v_c=v_d$, which can always be fulfilled: let $u=(s_a,s_b,s_c,s_d)$ elements in $V$ such that $s_c=s_d\neq 0$, then $\zeta(u)\in W_\A(\B)$ but $u\notin V_\B$.
\end{rem}

\subsection{Intersection property implies decomposability for collection of projectors over a finite posets}

\subsubsection{For finite semi-lattices}
\begin{defn}

Let $\A$ be a non necessarily finite meet semi-lattice, let $V$ be a vector space, a collection $(\pi_a\in \Li(V),a\in \A)$ is said to verify the intersection property if,

\begin{equation}\tag{I}
\forall a,b\in \A,\quad \pi_a\pi_b=\pi_{a\cap b}
\end{equation}

\end{defn}

\begin{ex}\label{example-decomposition-intersection}
Let $\A=\{0,1,1^{'}\}$, with $0\leq 1$, $0\leq 1^{'}$; let $(V,<,>)$ be a Hilbert space, such that there are three closed subspaces $S_0,S_1,S_{1^{'}}$ such that $V=S_0\oplus_\perp S_1\oplus_\perp S_{1^{'}}$; let  $V_0=S_0$, $V_1=S_0\oplus S_1$, $V_{1^{'}}= S_0\oplus S_{1^{'}}$. Let $\pi_0,\pi_1,\pi_{1^{'}}$ be the orthogonal projection on respectively $V_0, V_1, V_{1^{'}}$, then $\pi_1\pi_{1^{'}}= \pi_0=\pi_{1\cap 1^{'}}$. 
\end{ex}

\begin{prop}\label{prop}

Let $\A$ be a finite meet semi-lattice, and let $(\pi_a\in \Li(V),a\in\A)$ that satisfies the intersection property, then for any $a,b\in \A$,

$$s_b\pi_a=1[b\leq a] s_b$$

\end{prop}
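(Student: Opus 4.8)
The plan is to compute $s_b\pi_a$ directly from the Möbius formula for the decomposition. By the definition of $s_a$ via $s_a=\mu_\A\circ\Pi$, we have, as endomorphisms of $V$, $s_b=\sum_{c\leq b}\mu_\A(b,c)\pi_c$, while conversely $\pi_a=\sum_{d\leq a}s_d$. First I would substitute the former expression into $s_b\pi_a$ and push the product past the sum, so that the whole computation reduces to understanding the operators $\pi_c\pi_a$ for $c\leq b$.

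This is exactly where the intersection property enters: since $\A$ is a finite meet semi-lattice, hypothesis (I) gives $\pi_c\pi_a=\pi_{c\cap a}$, so that $s_b\pi_a=\sum_{c\leq b}\mu_\A(b,c)\pi_{c\cap a}$. I would then re-expand each $\pi_{c\cap a}$ back into the decomposition, $\pi_{c\cap a}=\sum_{d\leq c\cap a}s_d$, using that $d\leq c\cap a$ is equivalent to the conjunction $d\leq c$ and $d\leq a$. Swapping the order of summation (legitimate because $\A$ is finite) then yields $s_b\pi_a=\sum_{d\leq a}\Big(\sum_{c:\,d\leq c\leq b}\mu_\A(b,c)\Big)s_d$, isolating an inner Möbius sum in which the index $d$ ranges independently of $c$.

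The final step is the Möbius orthogonality relation $\sum_{c:\,d\leq c\leq b}\mu_\A(b,c)=1[d=b]$, which is precisely the content of the identity $\mu_\A\zeta_\A=\id$ already recorded in Proposition \ref{mobius-inversion}. Feeding this in collapses the outer sum to its single term $d=b$, subject to the surviving constraint $d=b\leq a$, giving $s_b\pi_a=1[b\leq a]\,s_b$ as claimed.

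I expect the only real subtlety to be bookkeeping: being careful that the orthogonality relation invoked here is the one coming from $\mu_\A\zeta_\A=\id$ (fixing the top index $b$ and summing over the lower index $c$), rather than its transpose used in the proof of Proposition \ref{mobius-inversion}. As a sanity check one can treat the two cases separately: when $b\leq a$, every $c\leq b$ satisfies $c\cap a=c$, so $s_b\pi_a=\sum_{c\leq b}\mu_\A(b,c)\pi_c=s_b$ immediately; when $b\not\leq a$, no surviving term has $d=b\leq a$, so $s_b\pi_a=0$.
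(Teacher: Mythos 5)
Your proof is correct, and it reaches the conclusion by a more elementary route than the paper's. The paper splits the statement in two: for $b\leq a$ it uses the commutation $\mu_{\hat a}\pr^{\A}_{\hat a}=\pr^{\A}_{\hat a}\mu_{\A}$ (Proposition \ref{commutation-zeta}) together with $\pi_c\pi_a=\pi_c$ for $c\leq a$ to obtain $s_b\pi_a=s_b$, and for the vanishing when $b\not\leq a$ it shows $\im \Pi_\A\pi_a\subseteq W_{\hat a}$ and invokes the identification $\zeta_{\A}(V_{\hat a})=W_{\hat a}$ of Proposition \ref{inclusion-eta}, which is where the meet semi-lattice hypothesis enters there. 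You instead run a single closed computation: substitute $s_b=\sum_{c\leq b}\mu_\A(b,c)\pi_c$, apply (I) to each $\pi_c\pi_a$, re-expand, swap the (finite) sums, and finish with the orthogonality $\sum_{c:\,d\leq c\leq b}\mu_\A(b,c)=1[d=b]$ --- the same identity the paper itself uses later in the proof of Proposition \ref{thm}. Your version needs no auxiliary structural lemmas about $W_{\A}(\B)$ or the commutation of $\mu$ with restriction, and it handles both cases at once. What the paper's formulation buys is that its two ingredients ($\mu_\A(\im \Pi\circ\pi_a)\subseteq V_{\hat a}$, and $s_b\pi_a=s_b$ for $b\leq a$) are precisely what gets abstracted into the intersection property over an arbitrary finite poset (Definition \ref{definition-intersection-projectors} and the remark following Proposition \ref{thm}), whereas your computation leans on the specific identity $\pi_c\pi_a=\pi_{c\cap a}$ and is therefore tied to the semi-lattice case. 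Within the stated hypotheses, both arguments are complete.
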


\begin{proof}
Let us remark that for any $b,c\in \A$ such that $c\leq b$, $\Pi_{\A}\circ \pi_b(c)= \Pi_{\A}(c)$; therefore for any $b\leq a$, 

$$s_b\pi_a=\pr^{\hat{a}}_{b}\pr^{\A}_{\hat{a}}\mu_{\A}\Pi_{\A}\pi_a=\pr^{\hat{a}}_{b}\mu_{\hat{a}}\pr^{\A}_{\hat{a}}\Pi_{\A}\pi_a$$

and as noted just before $\pr^{\A}_{\hat{a}}\Pi_{\A}\pi_a=\pr^{\A}_{\hat{a}}\Pi_{\A}$ so $s_b\pi_a=s_b$. Furthermore for any $b\in \A$, $\pi_b\pi_a=\pi_{a\cap b}=\pi_{a\cap b}\pi_a$, therefore $\im \Pi_\A\pi_a\subseteq W_{\hat{a}}$ and so $s_b\pi_a=1[b\leq a] s_b\pi_a$ and therefore $s_a\pi_b=1[a\leq b] s_a$.

\end{proof}


\begin{prop}\label{thm}
Let $\A$ be a finite meet semi-lattice, $V$ a vector space and $(\pi_a\in \Li(V),a\in \A)$ a collection that verifies the intersection property (I). Then for any $a,b\in \A$,

\begin{equation}
s_as_b=\delta_a(b)s_a
\end{equation}

\end{prop}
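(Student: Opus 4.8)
The goal is to show that the family $(s_a, a \in \A)$ attached to a collection of projectors satisfying the intersection property consists of idempotents that are mutually orthogonal, i.e. $s_a s_b = \delta_a(b) s_a$. The key input is the immediately preceding Proposition \ref{prop}, which asserts $s_b \pi_a = 1[b \leq a]\, s_b$ for all $a, b \in \A$. The plan is to combine this identity with the defining relation $\pi_a = \sum_{c \leq a} s_c$ from the Möbius-inversion construction of the $s_a$, and then to exploit Möbius inversion one more time to pass from statements about the $\pi_a$ to statements about the $s_a$.

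**Main computation.** First I would compute $s_b \pi_a$ in two ways. On one hand, Proposition \ref{prop} gives $s_b \pi_a = 1[b \leq a]\, s_b$. On the other hand, substituting the definition $\pi_a = \sum_{c \leq a} s_c$ yields $s_b \pi_a = \sum_{c \leq a} s_b s_c$. Equating the two gives, for every $a, b \in \A$,
\begin{equation}
\sum_{c \leq a} s_b s_c = 1[b \leq a]\, s_b.
\end{equation}
The right-hand side, as a function of $a$ with $b$ fixed, is exactly $s_b \cdot 1[b \leq a]$, which is the zeta-transform $\zeta_\A$ applied to the family $(\delta_b(c)\, s_b)_{c}$ supported at $c = b$. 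The left-hand side is $\zeta_\A$ applied to the family $(s_b s_c)_c$. Since $\zeta_\A$ is invertible by Proposition \ref{mobius-inversion} (Möbius inversion with values in $\Li(V)$, or applied componentwise to vectors), the two families must agree term by term: for every $c \in \A$,
\begin{equation}
s_b s_c = \delta_b(c)\, s_b.
\end{equation}
Relabelling recovers $s_a s_b = \delta_a(b)\, s_a$, which is the claim.

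**Where the subtlety lies.** The one point requiring care is the application of Möbius inversion. The identity $\sum_{c \leq a} s_b s_c = \sum_{c \leq a}\delta_b(c) s_b$ holds for all $a \in \A$ simultaneously, so both sides are the image under the zeta function $\zeta_\A$ of the respective $\A$-indexed families $(s_b s_c)_{c\in\A}$ and $(\delta_b(c)s_b)_{c\in\A}$, viewed as elements of the module $\Li(V)_\A = \bigoplus_{c\in\A}\Li(V)$. Here finiteness of $\A$ (or at least local finiteness together with finite support) guarantees these lie in the domain where $\zeta_\A$ is invertible, per the remark following the zeta-function construction; injectivity of $\zeta_\A$ then forces the two families to coincide. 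I do not expect a genuine obstacle here — this is the standard Möbius-inversion argument — but the bookkeeping of which index plays the role of the running variable ($c$) versus the fixed one ($b$) is the only place an error could slip in.
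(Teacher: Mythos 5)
Your proof is correct and follows essentially the same route as the paper: both arguments rest on Proposition \ref{prop} combined with M\"obius/zeta inversion. The only difference is cosmetic --- the paper expands $s_b=\sum_{c\leq b}\mu(b,c)\pi_c$ first and then uses $s_a\pi_c=1[a\leq c]s_a$ together with the identity $\sum_{a\leq c\leq b}\mu(b,c)=\delta_a(b)$, whereas you expand $\pi_a=\sum_{c\leq a}s_c$ and invert $\zeta_\A$ at the end; both are valid instances of the same computation.
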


\begin{proof}

For any $a,b\in \A$,

$$s_as_b=s_a\underset{c\leq b}{\sum} \mu(b,c) \pi_c(v)=\underset{c\leq b}{\sum} \mu(b,c) s_a\pi_c(v)$$

$$s_as_b=\underset{c\leq b}{\sum}\mu(b,c) 1[a\leq c]s_a =s_a \underset{a\leq c\leq b}{\sum}\mu(b,c)$$

and $\underset{a\leq c\leq b}{\sum}\mu(b,c)=\delta_a(b)$, by definition of $\mu$.\\

\end{proof}

\begin{rem}Proposition \ref{prop} simply relies on two remarks: firstly that for any $\A$ finite and $(\pi_a\in \Li(V),a\in \A)$, if for any $b\in \A$, $\mu_\A(\im \Pi \circ \pi_b)\subseteq V_{\hat{b}}$ then for any $a,b\in \A$ such that $b\leq a$, $s_b\pi_a=1[b\leq a]s_b\pi_a$; secondly  that if $(\pi_a,a\in \A)$ is a collection of projectors, for any $b\leq a$, $s_b\pi_a=s_b$, as shown in the proof of \ref{prop}. For this reason it seems natural to define as extension of the intersection property for a collection of projectors of $V$ over any finite poset to be that $\mu_\A(\im \Pi \circ \pi_b)\subseteq V_{\hat{b}}$; this would still allow us to get Theorem \ref{thm}. Showing (I) when the poset is a join semi-lattice is however much easier than when it isn't. 
\end{rem}

\subsubsection{For finite posets}
\begin{defn}[Intersection property]\label{definition-intersection-projectors}
Let $\A$ be a finite poset, we shall say that a collection of projectors of $V$, $(\pi_a,a\in \A)$, satisfies the intersection property if,

\begin{equation}\tag{I}
\forall b\in \A,\quad \mu_\A(\im [\Pi \circ \pi_b])\subseteq V_{\hat{b}}
\end{equation}

\end{defn}

\begin{lem}\label{rem}
Condition (I) is equivalent to asking that for any $v\in V$ and any $a,b\in \A$, $\pi_a\pi_b=\underset{c\in \hat{a}\cap \hat{b}}{\sum}s_c$, and for $\A$ a finite meet semi-lattice $\pi_{a\cap b}=\sum_{c\in \hat{a}\cap \hat{b}}s_c$.
\end{lem}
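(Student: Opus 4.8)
The plan is to read Lemma \ref{rem} as a Möbius-transform bookkeeping statement and to prove the two implications separately, after first unwinding the meaning of condition (I). Since $V_{\hat b}=\bigoplus_{a\leq b}V$ sits inside $V_\A$ as the components supported on $\hat b$, and since by construction $\mu_\A(\Pi(\pi_b(v)))(a)=s_a(\pi_b(v))=s_a\pi_b(v)$, condition (I) from Definition \ref{definition-intersection-projectors} is literally the pointwise assertion that $s_a\pi_b=0$ whenever $a\not\leq b$. So the whole lemma amounts to converting between this componentwise vanishing and the operator identity $\pi_a\pi_b=\sum_{c\in\hat a\cap\hat b}s_c$.

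For the implication from the formula to (I), I would fix $b$ and feed $\pi_a\pi_b(v)=\sum_{c\leq a,\,c\leq b}s_c(v)$ back into the Möbius machinery. Setting $\lambda_c=1[c\leq b]\,s_c(v)$, the family $a\mapsto \pi_a\pi_b(v)$ is exactly $\zeta_\A(\lambda)$, so Proposition \ref{mobius-inversion} gives $s_a\pi_b(v)=\mu_\A(\Pi(\pi_b(v)))(a)=\lambda_a=1[a\leq b]\,s_a(v)$. This is supported on $\hat b$, which is (I); as a bonus it records the companion identity $s_c\pi_b=s_c$ for $c\leq b$.

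For the converse I would expand $\pi_a\pi_b=\big(\sum_{c\leq a}s_c\big)\pi_b=\sum_{c\leq a}s_c\pi_b$, discard the terms with $c\not\leq b$ using (I), and then replace $s_c\pi_b$ by $s_c$ for the surviving indices $c\leq b$, leaving $\sum_{c\in\hat a\cap\hat b}s_c$. The main obstacle is this last replacement: the identity $s_c\pi_b=s_c$ for $c\leq b$ is not a consequence of (I) alone but of the projector structure, and I would import it exactly as in the proof of Proposition \ref{prop} (it rests on $\pi_c\pi_b=\pi_c$ for $c\leq b$, i.e. on the $\pr^\A_{\hat b}\Pi\pi_b=\pr^\A_{\hat b}\Pi$ reduction already used there). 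It is worth keeping track of this hypothesis, since for a general collection of projectors merely satisfying (I) the reverse relation can fail.

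Finally, when $\A$ is a meet semi-lattice the set $\hat a\cap\hat b$ coincides with $\widehat{a\cap b}$, so $\sum_{c\in\hat a\cap\hat b}s_c=\sum_{c\leq a\cap b}s_c=\pi_{a\cap b}$, which yields the stated specialization; alternatively one can phrase (I) through Proposition \ref{inclusion-eta} as $\im(\Pi\circ\pi_b)\subseteq W_\A(\hat b)$ and reach the same conclusion.
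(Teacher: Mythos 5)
The paper states Lemma \ref{rem} without proof, so the only basis for comparison is the mechanism used in Propositions \ref{prop} and \ref{chapitre3:thm1}; your Möbius-inversion bookkeeping is exactly that mechanism, and your backward direction is complete and correct: if $\pi_a\pi_b=\sum_{c\in\hat{a}\cap\hat{b}}s_c$ for all $a$, then $\Pi(\pi_b(v))=\zeta_\A(\lambda)$ with $\lambda_c=1[c\leq b]s_c(v)$ supported on $\hat{b}$, so $\mu_\A(\Pi(\pi_b(v)))=\lambda\in V_{\hat{b}}$, which is (I), together with the bonus identity $s_c\pi_b=s_c$ for $c\leq b$.

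For the forward direction you have correctly isolated the crux, and it is a genuine gap in the statement rather than in your argument: the identity $s_c\pi_b=s_c$ for $c\leq b$ reduces to $\pi_c\pi_b=\pi_c$ for $c\leq b$ (presheafability, Proposition \ref{presheaf-projector}), and this does \emph{not} follow from Definition \ref{definition-intersection-projectors}. Concretely, take $\A=\{0,1\}$ with $0<1$, $V=\R^2$, $\pi_1(x,y)=(x,0)$ and $\pi_0(x,y)=(x-y,0)$: both are projectors, $\pi_1\pi_0=\pi_0$, hence $s_1\pi_0=\pi_1\pi_0-\pi_0^2=0$ and (I) holds, yet $\pi_0\pi_1=\pi_1\neq\pi_0=\sum_{c\in\hat{0}\cap\hat{1}}s_c$, so the stated equivalence fails. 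You cannot simply ``import'' the missing identity from Proposition \ref{prop}, because there it is derived from the stronger hypothesis $\pi_a\pi_b=\pi_{a\cap b}$ (which gives $\pi_c\pi_b=\pi_{c\cap b}=\pi_c$ for $c\leq b$); under Definition \ref{definition-intersection-projectors} alone it is unavailable, and the paper's proof of Proposition \ref{chapitre3:thm1} silently assumes it in the line asserting $\Pi_{\A}\circ\pi_b(c)=\Pi_{\A}(c)$ for $c\leq b$ (the same example breaks that proposition: $s_0s_1=s_1\neq 0$). The fix is to add presheafability as a standing hypothesis (it holds in all of the paper's applications, e.g. conditional expectations and the projectors $G^a_\alpha F^\alpha_a$ of Section \ref{chapitre-3-section-3}); with that hypothesis your argument closes the forward direction exactly as you outline, and the meet semi-lattice specialization via $\hat{a}\cap\hat{b}=\widehat{a\cap b}$ is immediate.
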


\begin{prop}\label{chapitre3:thm1}
Let $\A$ be a finite poset, $(\pi_a,a\in \A)$ a collection of projectors of $V$ that verifies the intersection property; for any $a,b\in \A$,

\begin{equation}
s_as_b=\delta_a(b)s_a
\end{equation}

\end{prop}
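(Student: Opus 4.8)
The plan is to reduce the statement to the single intermediate identity
\[
 s_b\pi_a = 1[b\leq a]\,s_b \qquad\text{for all } a,b\in\A,
\]
after which the concluding M\"obius computation is word for word the one in the proof of Proposition \ref{thm}. So the real work is to re-establish this identity for an \emph{arbitrary} finite poset, using the intrinsic form of (I) from Definition \ref{definition-intersection-projectors} in place of the meet relation $\pi_a\pi_b=\pi_{a\cap b}$ that was available for semi-lattices; this is the analog of Proposition \ref{prop}, which cannot be cited directly since it assumes a meet semi-lattice.

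First I would record the componentwise reading of condition (I). Unwinding the definitions, $\mu_\A\circ(\Pi\circ\pi_a)(v)$ has $c$-component exactly $s_c\pi_a(v)$, since $s_c(w)=[\mu_\A\Pi(w)](c)$ taken at $w=\pi_a(v)$. Hence $\mu_\A(\im[\Pi\circ\pi_a])\subseteq V_{\hat a}$ says precisely that $s_c\pi_a=0$ whenever $c\not\leq a$. This immediately gives the off-diagonal half $s_b\pi_a=1[b\leq a]\,s_b\pi_a$ directly from (I); this is the main payoff of phrasing (I) intrinsically, since in the general poset setting there is no meet to exploit.

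For the diagonal half, i.e.\ $s_b\pi_a=s_b$ when $b\leq a$, I would localize to the principal down-set $\hat a\in\U(\A)$. Writing $s_b=\pr^{\hat a}_b\,\pr^{\A}_{\hat a}\,\mu_\A\Pi_\A$ for $b\in\hat a$ and commuting the M\"obius map past the restriction via Proposition \ref{commutation-zeta} (valid because $\hat a$ is a lower-set), I get $s_b\pi_a=\pr^{\hat a}_b\,\mu_{\hat a}\,\pr^{\A}_{\hat a}\Pi_\A\pi_a$. It then suffices that $\pr^{\A}_{\hat a}\Pi_\A\pi_a=\pr^{\A}_{\hat a}\Pi_\A$, i.e.\ that $\pi_c\pi_a=\pi_c$ for every $c\leq a$; this is supplied by Lemma \ref{rem}, since $\pi_c\pi_a=\sum_{d\in\hat c\cap\hat a}s_d=\sum_{d\leq c}s_d=\pi_c$ when $c\leq a$. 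Commuting back yields $s_b\pi_a=s_b$, and together with the off-diagonal half this proves the intermediate identity.

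Finally, with $s_a\pi_c=1[a\leq c]\,s_a$ in hand, I would expand $s_as_b=\sum_{c\leq b}\mu_\A(b,c)\,s_a\pi_c=\sum_{c\leq b}\mu_\A(b,c)1[a\leq c]\,s_a=s_a\sum_{a\leq c\leq b}\mu_\A(b,c)$ and invoke the M\"obius identity $\sum_{a\leq c\leq b}\mu_\A(b,c)=\delta_a(b)$ to conclude $s_as_b=\delta_a(b)s_a$. The main obstacle is the diagonal half of the intermediate identity: it is exactly there that one must pass from the global poset $\A$ to the principal down-set $\hat a$ and check that M\"obius inversion is compatible with this restriction, which is the role of Proposition \ref{commutation-zeta}; everything else is bookkeeping with the definition of the $s_a$ together with a standard M\"obius inversion.
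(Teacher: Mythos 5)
Your proposal retraces the paper's own proof almost step for step: the off-diagonal vanishing $s_b\pi_a=1[b\leq a]\,s_b\pi_a$ read directly off Definition \ref{definition-intersection-projectors}, the localization $s_b\pi_a=\pr^{\hat{a}}_{b}\mu_{\hat{a}}\pr^{\A}_{\hat{a}}\Pi_{\A}\pi_a$ via Proposition \ref{commutation-zeta}, and the closing M\"obius computation borrowed from Proposition \ref{thm}. The one place you diverge is in how you ground the diagonal identity $s_b\pi_a=s_b$ for $b\leq a$, and that is precisely where a problem sits. You reduce it to $\pi_c\pi_a=\pi_c$ for $c\leq a$ and say this is ``supplied by Lemma \ref{rem}''; but the implication of Lemma \ref{rem} you are invoking (condition (I) gives $\pi_a\pi_b=\sum_{c\in\hat{a}\cap\hat{b}}s_c$) is, after M\"obius inversion over $a\in\hat{b}$, \emph{equivalent} to the conjunction of the off-diagonal vanishing with the identity $s_c\pi_b=s_c$ for $c\leq b$ --- the very thing you are trying to establish. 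The appeal is therefore circular. (The paper's proof does not cite Lemma \ref{rem}; it simply asserts $\Pi_{\A}\circ\pi_a(c)=\Pi_{\A}(c)$ for $c\leq a$, importing a step that in Proposition \ref{prop} was justified by the semi-lattice relation $\pi_c\pi_a=\pi_{c\cap a}=\pi_c$, which is unavailable here.)

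The gap is genuine and not merely cosmetic: Definition \ref{definition-intersection-projectors} only constrains $s_c\pi_a$ for $c\not\leq a$ and says nothing about $\pi_c\pi_a$ for $c\leq a$. On the chain $0<1$, take $\pi_0,\pi_1$ two projectors of $V$ with the same image but different kernels; then $\pi_1\pi_0=\pi_0$, so (I) holds, yet $\pi_0\pi_1=\pi_1\neq\pi_0$ and $s_0s_1=\pi_0(\pi_1-\pi_0)=\pi_1-\pi_0\neq 0$. So the diagonal half requires an input beyond (I) as literally written, for instance presheafability in the sense of Proposition \ref{presheaf-projector} ($\pi_c\pi_a=\pi_c$ whenever $c\leq a$), which does hold in the situations the result is applied to (conditional expectations, and the collections $\pi^{\alpha}$ of Section \ref{chapitre-3-section-3}). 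Your argument would be repaired by adding that hypothesis explicitly and deriving $\pi_c\pi_a=\pi_c$ from it, rather than citing Lemma \ref{rem} as a given; as written, it reproduces --- only slightly more visibly --- the same unjustified step as the paper.
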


\begin{proof}
As $\mu_\A(\im \Pi \circ \pi_b)\subseteq V_{\hat{b}}$, for any $a\in \A$ and $v\in V$, $(s_b(\pi_a(v))\in V_\B$, therefore for any $b\in \B$, $s_b\pi_a=1[b\leq a]s_b\pi_a$. For any $b,c\in \A$ such that $c\leq b$, $\Pi_{\A}\circ \pi_b(c)= \Pi_{\A}(c)$; therefore for any $b\leq a$, 

$$s_b\pi_a=\pr^{\hat{a}}_{b}\pr^{\A}_{\hat{a}}\mu_{\A}\Pi_{\A}\pi_a=\pr^{\hat{a}}_{b}\mu_{\hat{a}}\pr^{\A}_{\hat{a}}\Pi_{\A}\pi_a= \pr^{\A}_{\hat{a}}\Pi_{\A}$$

Therefore $s_b\pi_a=s_b$; the end of the proof is exactly the same than the one of Theorem \ref{thm}.

\end{proof}

\begin{rem}\label{generalisation-to-modules}
Let us call projectors of $(R-)$modules endomorphims $\pi$ of an module $M$ such that $\pi^2=\pi$; it is possible to replace the vector space $V$ by an $(R-)$modules $M$ in the definition of the intersection property (Definition \ref{definition-intersection-projectors}) and the proof of Proposition \ref{chapitre3:thm1} still holds for an $(R)-$module $M$ instead of a $(k-)$vector space $V$. We choose to start by a presentation of the equivalence theorem in the cases of vectors spaces as one usually encounters projectors as endomorphisms of vector spaces; if $\pi$ is a projector of a module $M$ then it splits, i.e. it admits a section, as its image and kernel are in direct sum, this does not happen in general for modules. 
\end{rem}

\begin{cor}\label{isomorphism-decomposable-intersection}

Let $\A$ be a finite poset, $V$ a vector space and $(\pi_a,a\in \A)$ a collection of projectors of $V$ that verifies the intersection property (I). Let for $a\in \A$, $S_a=\im s_a$, let $S(\A)= \bigoplus_{a\in \A} S_a$ and $S(\hat{a})= \bigoplus_{b\leq a} S_b$ then $\zeta_\A|^{S(\A)}_{\im \pi}$ and $\zeta_{\hat{a}}|^{S(\hat{a})}_{\im \pi_a}$ are isomorphisms.

\end{cor}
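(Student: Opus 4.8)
The plan is to verify the two isomorphism claims by combining the structural identity $s_a s_b = \delta_a(b) s_a$ (Proposition \ref{chapitre3:thm1}) with the Möbius inversion machinery already established. The key observation is that the maps $s_a$ are idempotents whose images $S_a = \im s_a$ are what we want to assemble into direct sums, and that $\zeta_\A$ is precisely the map that reconstructs the $\pi_a$ from the $s_a$. So the whole content reduces to showing that $\zeta$, restricted appropriately, is a bijection with the correct domain and codomain.

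**First I would** fix $\alpha \in \A$ (or work with $\alpha = \hat a$) and unwind the definitions. By construction $\pi_a(v) = \sum_{b \le a} s_b(v)$, which is exactly the statement that $\Pi(v) = \zeta_\A(s_\bullet(v))$ where $s_\bullet(v) = (s_b(v))_{b \in \A}$. The relation $s_a s_b = \delta_a(b) s_a$ says the $s_a$ are mutually orthogonal idempotents, so for any $v$ the components $s_a(v) \in S_a$ and the element $(s_a(v))_{a \le \alpha}$ lives in $S(\hat\alpha) = \bigoplus_{b \le \alpha} S_b$ as an honest direct sum (orthogonality of the idempotents guarantees the sum is direct). **Then I would** argue surjectivity and injectivity of $\zeta_{\hat a}|^{S(\hat a)}_{\im \pi_a}$: surjectivity is immediate because any $w \in \im \pi_a$ satisfies $w = \pi_a(w) = \sum_{b \le a} s_b(w) = \zeta_{\hat a}((s_b(w))_{b\le a})$, exhibiting $w$ as $\zeta$ of an element of $S(\hat a)$; injectivity follows because $\mu_{\hat a}$ is a genuine two-sided inverse of $\zeta_{\hat a}$ by Proposition \ref{mobius-inversion}, so $\zeta$ cannot collapse distinct tuples. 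The same argument run over all of $\A$ rather than $\hat a$ gives the global statement for $\zeta_\A|^{S(\A)}_{\im \pi}$, where $\im \pi$ denotes $\bigoplus_a \im \pi_a$ or the appropriate total space.

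**The one point that needs care**, and which I expect to be the main obstacle, is checking that $\zeta_{\hat a}$ actually maps $S(\hat a)$ \emph{into} $\im \pi_a$ rather than somewhere larger, i.e.\ that the codomain is correctly identified. This is where Proposition \ref{prop} (the relation $s_b \pi_a = 1[b \le a] s_b$) and the containment $\mu_\A(\im[\Pi \circ \pi_b]) \subseteq V_{\hat b}$ from the intersection property enter: one must confirm that $\zeta_{\hat a}(s_\bullet)$ lands in the image of $\pi_a$ and not merely in $V_{\hat a}$, using that $\pi_a = \sum_{b \le a} s_b$ is itself the relevant projector. **Finally I would** note that because $\zeta_{\hat a}$ is injective on $S(\hat a)$ with image exactly $\im \pi_a$, it restricts to the claimed isomorphism, and the compatibility of these isomorphisms across different $a$ (via the commutation relations $\zeta_\B \pr^\A_\B = \pr^\A_\B \zeta_\A$ of Proposition \ref{commutation-zeta}) makes the family coherent, which is what feeds into the decomposability statement. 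The bulk of the work is bookkeeping with the orthogonal idempotents; the genuinely delicate step is the codomain identification.
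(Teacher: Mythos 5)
Your proposal is correct and follows essentially the same route as the paper's proof: surjectivity comes from $w=\pi_a(w)=\sum_{b\le a}s_b(w)$ for $w\in\im\pi_a$, injectivity from the invertibility of $\zeta_{\hat a}$ (equivalently the orthogonality $s_as_b=\delta_a(b)s_a$ of Proposition \ref{chapitre3:thm1}), and the ``delicate'' codomain identification from the containment $S_b\subseteq\im\pi_a$ for $b\le a$. The only slip is that this last step uses $\pi_a s_b=\sum_{c\le a}s_c s_b=s_b$ (post-composition with $\pi_a$, again from Proposition \ref{chapitre3:thm1}) rather than the relation $s_b\pi_a=1[b\le a]s_b$ of Proposition \ref{prop} that you cite; with that one-line correction your argument is the paper's.
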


\begin{proof}

For any collection $(\pi_a\in \Li(V),a\in \A)$ one has that $\im \zeta_\A\subseteq \underset{b\in \A}{\bigoplus} S_b$ and for $a\in \A$, $\im \zeta_{\hat{a}}\subseteq \underset{b\leq a}{\bigoplus} S_b$; when $(\pi_a,a\in \A)$ is a collection of projectors that verifies (I), from Proposition \ref{chapitre3:thm1}, for any $b\leq a$,$\pi_as_b=\underset{c\leq a}{\sum}s_cs_b=s_b$ and $S_b\subseteq \im \pi_a$, so $\im \zeta_\A= \underset{b\in \A}{\bigoplus} S_b$ and $\im \zeta_{\hat{a}}=\underset{b\leq a}{\bigoplus} S_b$. As $\zeta_\A$, $\zeta_{\hat{a}}$ are injective (Proposition \ref{Mobius-inversion}), $\zeta_\A|^{\underset{b\in \A}{\bigoplus} S_b}_{\im \pi}$ and $\zeta_{\hat{a}}|^{\underset{b\leq a}{\bigoplus} S_b}_{\im \pi_a}$ are isomorphisms.

\end{proof}

\begin{thm}[Intersection property and decomposability]\label{decomposable-projector}
Let $\A$ be a finite poset, $V$ a vector space; a collection of projectors of $V$, $(\pi_a,a\in \A)$, satisfies the intersection property if and only if it is decomposable.
\end{thm}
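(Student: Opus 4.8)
The statement is an equivalence, and most of its substance is already contained in the preceding results; my plan is to assemble them and carefully handle the bookkeeping around the adjoined top element $1\in\A^{\Plus}$. I would treat the two implications separately, doing the easy direction first.

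For \emph{decomposable $\Rightarrow$ intersection}, suppose $(\pi_a,a\in\A)$ is decomposable with decomposition $(S_a,a\in\A^{\Plus})$ and coordinate projections $(s_a,a\in\A^{\Plus})$. Condition $(2)$ of Definition \ref{decomposable} reads $\pi_a=\sum_{b\leq a}s_b$ for each $a\in\A$, where the index $1$ never contributes since $1$ is the top of $\A^{\Plus}$ and so $1\not\leq a$. By the uniqueness of Möbius inversion (Proposition \ref{mobius-inversion}), the restricted family $(s_a,a\in\A)$ must coincide with the family obtained from $\pi$ by Möbius inversion in Definition \ref{definition-intersection-projectors}, so I may identify the two. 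Proposition \ref{decomposable-to-intersection} then yields $\pi_a\pi_b=\sum_{c\in\hat{a}\cap\hat{b}}s_c$ for all $a,b\in\A$, which by Lemma \ref{rem} is exactly the intersection property.

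For \emph{intersection $\Rightarrow$ decomposable}, the main content, I would assume $(I)$ and invoke Proposition \ref{chapitre3:thm1}, which gives that the Möbius family $(s_a,a\in\A)$ is a system of orthogonal idempotents, $s_as_b=\delta_a(b)s_a$. I then set $S_a=\im s_a$ for $a\in\A$ and $S_1=\ker P$, where $P=\sum_{a\in\A}s_a$. Orthogonality makes $P$ idempotent with $\im P=\bigoplus_{a\in\A}S_a$ (an internal direct sum) and complementary kernel $S_1$, so that $V=\bigoplus_{a\in\A}S_a\oplus S_1=\bigoplus_{a\in\A^{\Plus}}S_a$, which is condition $(1)$ of Definition \ref{decomposable}; the direct-sum structure on the images is the one recorded in Corollary \ref{isomorphism-decomposable-intersection}. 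Next I verify that the coordinate projection of this decomposition onto $S_c$ is again $s_c$ for $c\in\A$: the identity $s_cP=s_c$ shows $s_c$ vanishes on $S_1=\ker P$, while $s_c(w_b)=\delta_c(b)w_b$ on each $S_b=\im s_b$, so $s_c(\sum_{b\in\A^{\Plus}}w_b)=w_c$. Condition $(2)$ is then immediate: $\pi_a(v)=\sum_{b\leq a}s_b(v)=\sum_{b\leq a}w_b$. This exhibits $(S_a,a\in\A^{\Plus})$ as a decomposition of $\pi$.

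The \textbf{genuinely nontrivial step} — passing from the intersection property to orthogonal idempotents — has already been carried out in Proposition \ref{chapitre3:thm1}, via the containment $\mu_\A(\im[\Pi\circ\pi_b])\subseteq V_{\hat{b}}$ and the zeta/Möbius commutation of Proposition \ref{commutation-zeta}. Consequently the only care I expect to need here is the bookkeeping of the adjoined element $1$: checking that $1\not\leq a$ keeps it out of every sum defining $\pi_a$, and that $S_1=\ker P$ is precisely the complement that upgrades the internal direct sum $\bigoplus_{a\in\A}S_a$ over $\im P$ into an isomorphism onto all of $V$. Beyond this synthesis I anticipate no serious obstacle.
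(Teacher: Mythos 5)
Your proposal is correct and follows essentially the same route as the paper: the forward direction is Proposition \ref{decomposable-to-intersection} combined with Lemma \ref{rem}, and the backward direction adjoins the complementary idempotent $s_1=\id-\sum_{a\in\A}s_a$ (your $S_1=\ker P$ coincides with the paper's $S_1=\im s_1$ since $P$ is idempotent) and applies Proposition \ref{chapitre3:thm1}. Your explicit identification, via uniqueness of Möbius inversion, of the coordinate projections of the decomposition with the Möbius-inverted family is a small bookkeeping point the paper leaves implicit, but it is the same argument.
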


\begin{proof}
If $(\pi_a,a\in \A)$ is decomposable Proposition \ref{decomposable-to-intersection} and Remark \ref{rem} imply that it satisfies (I).\\

For any $v\in V$, let $s_1(v)=v-\underset{a\in \A}{\sum}s_a(v)$, for any $a\in \A$, $s_as_1(v)=s_a(v)-s_a(v)=0=s_1s_a(v)$. Let $S_1=\im s_1$ then by Proposition \ref{chapitre3:thm1}, $(S_b,b\in \A^{\Plus})$ is a decomposition of $\F$ and for any $v\in V$, and $a\in \A^{\Plus} $, $\pi_a(v)=\underset{b\leq a}{\sum}s_a(v)$, where $\pi_1=id$.\\

\end{proof}

\subsection{Characterizing interaction decompositions for factor spaces}

\begin{cor}[Interaction Decomposition for factor spaces]\label{interaction-decomposition}
Let $I$ be a finite set, $(E_i,i\in I)$ a collection of finite sets, and $\mathbb{P}$ a probability measure on $E$, $(E_a[.| \mathcal{F}_a],a\in \mathcal{P}(I))$ is decomposable if and only if $\mathbb{P}$ is a product measure, i.e there is $(p_i\in \mathbb{P}(E_i),i\in I)$ such that $\mathbb{P}= \underset{i\in I}{\otimes}p_i$.\\

\end{cor}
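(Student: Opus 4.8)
The plan is to reduce everything to the main equivalence theorem and then to supply the probabilistic content. First I would observe that $\mathcal{P}(I)$ is a finite poset which is moreover a meet semi-lattice, with meet given by set intersection (this is the example recorded just after the meet semi-lattice definition), so by Theorem \ref{decomposable-projector} the collection $(\pi_a = E[\,\cdot\,|\mathcal{F}_a], a\in \mathcal{P}(I))$ is decomposable if and only if it satisfies the intersection property, which by Lemma \ref{rem} in the semi-lattice case reads
\begin{equation}
\pi_a\pi_b = \pi_{a\cap b}\qquad \text{for all } a,b\subseteq I.
\end{equation}
Thus the corollary is equivalent to the statement: the nested conditional expectations satisfy this commutation-to-the-meet identity if and only if $\mathbb{P}=\bigotimes_{i\in I}p_i$.

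The probabilistic heart of the argument is the classical identification of this operator identity with conditional independence: $\pi_a\pi_b=\pi_{a\cap b}$ holds if and only if $\mathcal{F}_a$ and $\mathcal{F}_b$ are conditionally independent given $\mathcal{F}_{a\cap b}$. I would prove the direction I need by the tower property, noting that for $\mathcal{F}_a$-measurable $g=\pi_a f$ one has $E[g|\mathcal{F}_{a\cap b}]=E[f|\mathcal{F}_{a\cap b}]$ since $a\cap b\subseteq a$, so conditional independence collapses $E[\pi_a f|\mathcal{F}_b]$ to $\pi_{a\cap b}f$. For the ($\Leftarrow$) direction of the corollary, if $\mathbb{P}=\bigotimes_i p_i$ then $\mathbb{P}_a(x_a)=\prod_{i\in a}p_i(x_i)$ and the conditional density factorizes, so $\pi_a$ is exactly integration against $\prod_{i\notin a}p_i$ over the coordinates outside $a$; composing $\pi_a$ after $\pi_b$ then integrates out precisely the coordinates in $(a\cap b)^c$, which is $\pi_{a\cap b}$. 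This is a direct computation with the explicit kernel, with the one subtlety being to track the indicator $1[\,\cdot\in\Supp\mathbb{P}_a]$ and check that supports factorize as $\Supp\mathbb{P}_a=\prod_{i\in a}\Supp p_i$.

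For the ($\Rightarrow$) direction I would extract the product structure inductively. Applying the identity to disjoint pairs gives $\pi_a\pi_b=\pi_\emptyset=E[\,\cdot\,]$ whenever $a\cap b=\emptyset$, which is exactly unconditional independence of the blocks $X_a$ and $X_b$. Enumerating $I=\{1,\dots,n\}$ and taking $a=\{1,\dots,k\}$, $b=\{k+1\}$ at each step, I would promote pairwise block independence to mutual independence of all coordinates by induction on $k$: independence of $X_{\{1,\dots,k\}}$ from $X_{k+1}$ together with the inductive factorization $\mathbb{P}_{\{1,\dots,k\}}=\bigotimes_{i\le k}p_i$ yields $\mathbb{P}_{\{1,\dots,k+1\}}=\bigotimes_{i\le k+1}p_i$. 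At the end $\mathbb{P}=\bigotimes_{i\in I}p_i$ with $p_i=\mathbb{P}_{\{i\}}$.

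The main obstacle I anticipate is bookkeeping around the zero-on-the-complement-of-the-support convention for conditional expectations: the equivalence between the operator identity and conditional independence, and the factorization of supports, must be established carefully so that the argument does not tacitly assume $\Supp\mathbb{P}=E$. The second point requiring care is that the intersection property must be used for enough pairs to force \emph{mutual} independence rather than mere pairwise independence; the inductive choice of $a=\{1,\dots,k\}$, $b=\{k+1\}$ is what guarantees this, and I would make sure each inductive step genuinely combines block independence with the previously obtained factorization.
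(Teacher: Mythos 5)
Your proposal is correct and follows essentially the same route as the paper: reduce decomposability to the identity $\pi_a\pi_b=\pi_{a\cap b}$ via the equivalence theorem, verify it directly for product measures, and for the converse apply it to disjoint pairs to extract independence (the paper uses $a=\{i\}$, $b=\{i\}^c$ where you use $a=\{1,\dots,k\}$, $b=\{k+1\}$, a cosmetic difference). Your explicit induction and your attention to the support convention are in fact more careful than the paper's one-line argument.
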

\begin{proof}
$\mathcal{P}(I)$ possesses all its intersections; if $(\pi_a=E_a[.| \mathcal{F}_a],a\in \mathcal{P}(I))$ is decomposable, then by Corollary \ref{decomposable-projector}, for any $a,b\in \A$, $\pi_a\pi_b=\pi_{a\cap b}$. Therefore for any $i\in I$ and $f\in \mathcal{M}_b(E,\mathcal{F})$, $\pi_{\{i\}}\pi_{\{i\}^c}(f)=\pi_\emptyset(f)= E[f]$, and so $\mathbb{P}$ is a product measure.\\

Let $\mathbb{P}$ be a product measure, then for any $a,b\in \A$, $\pi_a\pi_b=\pi_{a\cap b}$ and by Theorem \ref{thm} $(E_a[.| \mathcal{F}_a],a\in \mathcal{P}(I))$ is decomposable.
\end{proof}

\section{Interaction Decomposition for presheaves in $\Mod$}\label{chapitre-3-section-3}

\subsection{The idea behind the extension and the idea of the proof}

We saw that collections of projectors can be encoded by a presheaf in Proposition \ref{inversable-1}; in this section we will extend the previous results to presheaves. Let us consider a presheaf $F:\A\to \Vect$ from a finite poset $\A$ to the category of vector spaces. Decomposable presheaves are the ones that are isomorphic to the presheaf of projections of some direct sum $\bigoplus_{a\in \A} S_a$. In order to use the previous results (Proposition \ref{chapitre3:thm1}, Theorem \ref{decomposable-projector}) we must have a way to bring back the spaces $F(b)$ into $F(a)$ for $a,b\in \A$ such that $b\leq a$ in such a way that $F^a_b$ seen inside $F(a)$ is a projector. If one considers a morphism $\pi :V\to V_1$ between two vector spaces then for any section, $s$, of $\pi$, $s\pi$ is a projector. It turns out that when considering the additional data of a functor $G:\A\to \Vect$ that is a section of $F$, in other words such that for any $b\leq a$, $F^a_bG^b_a= \id$, one can characterize intrinsically presheaves that are decomposable and there is a canonical way to define this isomorphism for such presheaves. Furthermore the presheaf of projections $F^a_b: \bigoplus_{c\leq a} S_c\to \bigoplus_{c\leq b} S_c$ has a canonical functor which morphisms are sections of the ones of $F$, so there is a functor of sections for any decomposable presheaf; this motivated the studies of such couples of functor/presheaf $(G,F)$ in what follows. In this context there is no reason to restrict our attention to vector spaces, as every $F^a_b$ is a split epimorphism by construction, i.e it admits a section, and we consider instead presheaves in $R-$ modules where $R$ is a commutative ring.

\subsection{The category of splittings}

\begin{defn}[Category $\Split$]
Let $\cat{C}$ be any category, $\Split(\cat{C})$ is the subcategory of $\cat{C}\times \cat{C}^{op}$ that has as objects $(M,M)$ with $M$ an object of $\cat{C}$ and for any $M,M_1$ two objects of $\cat{C}$, $\cat{Split}\left((M,M),(M_1,M_1)\right)$ are couples of morphisms, $(s,r)$, with $s:M\to M_1$, $r: M_1\to M$ such that,

\begin{equation}
rs= \id
\end{equation}

\end{defn}

\begin{proof}
Let $(s,r)\in \cat{Split}\left((M,M),(M_1,M_1)\right)$, $(s_1,r_1)\in \cat{Split}\left((M_1,M_1),(M_2,M_2)\right)$, $rr_1s_1s=\id$.\\

\end{proof}

\begin{nota}
We shall note $\Split(\Mod)$ simply as $\Split$.
\end{nota}

\begin{rem}
Let $\pi_1,\pi_2$ be the two projections from respectively $\cat{Split}\to \Mod$, $\cat{Split}\to \Mod^{op}$ defined as $\pi_1(V,V)=\pi_2(V,V)=V$ for $V$ and object of $\Split$ and for a morphism of $\Split$, $\pi_1(s,r)=s$, $\pi_2(s,r)=r$. Any functor $H$ from a poset $\A$ to $\Mod\times \Mod^{op}$ defines a couple of functor/presheaf $(\pi_1H,\pi_2H)$ and for any couple of functor/presheaf $(G,F)$ there is a unique a functor from $\A$ to $\Mod\times \Mod^{op}$, $H$, such that $\pi_1H=G$, $\pi_2H=F$; similarly any functor $H$ from a poset $\A$ to $\Split$ defines a couple of functor/presheaf $(\pi_1H,\pi_2H)$, any couple $(G,F)$ of functor/presheaf from $\A$ to $\Mod$ defines a functor from $\A$ to $\Split$ if and only if for any $a,b\in \A$ such that $b\leq a$, $F^a_bG^b_a=\id$. From now on when we refer to a functor from $\A\to \Split$ we shall refer to its couple $(\pi_1H,\pi_2H)$.

\end{rem}

\begin{nota}
Let $\cat{C},\cat{C}_1$ be two categories we will denote the category of functors from $\cat{C}$ to $\cat{C}_1$ as $\cat{C}_1^{\cat{C}}$.
\end{nota}

\begin{nota}
Let $\cat{C}$, $\cat{C}_1$, $\cat{C}_2$ be three categories, let $F,G:\cat{C}\to \cat{C}_1$ be two functor, $H:\cat{C}_1\to \cat{C}_2$ a functor and $\phi: F\to G$ a natural transformation. We shall note $H\star\phi$ their whiskering (Appendix A Definition A.3.5 \cite{coend}).
\end{nota}

\begin{rem}
Let $\cat{C}$ be any category, let $(G,F)$, $(G_1,F_1)$ be two functors from $\A$ to $\Split(\cat{C})$ and let $\rho=(\phi,\psi)\in \Split(\cat{C})^\A\left[(G,F), (G_1,F_1)\right]$ be a natural transformation of these functors. For any $a\in \A$, $\psi_a\phi_a= \id$ and $\phi\in \cat{C}^\A(G,G_1)$, $\psi\in \cat{C}^\A(F_1,F)$, in other words $\pi_1\star\rho$ is a natural transformation from $G$ to $G_1$ and $\pi_2\star\rho$ is a natural transformation from $F_1$ to $F$. Now if $\phi$ or $\psi$ was an isomorphism then $(\phi,\phi^{-1})$ would be a morphisms from $(G,F)$ to $(G_1,F_1)$ and respectively $(\psi^{-1}, \psi)$; furthermore if $\phi$ is an isomorphism it also a natural transformation from $F$ to $F_1$ and respectively if $\psi$ is an isomorphism it is a natural transformation from $G$ to $G_1$; when one of the two options is true the reference to its inverse will be implicit, for example we will just say that $\phi$ is a natural transformation from $(G,F)$ to $(G_1,F_1)$. 
\end{rem}

\begin{prop}
Let $(G,F),(G_1,F_1)$ be two functors from $\A$ to $\Split$ and $\phi:(G,F)\to (G_1,F_1)$ be a natural transformation. Let $\im \phi=(\im \pi_1\star \phi, \im \pi_2\star \phi)$, $\im\phi$ is a functor from $\A$ to $\Split$.

\end{prop}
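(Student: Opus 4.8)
The plan is to unwind the definitions and reduce everything to a check on components, the heart of which is the retraction identity that forces $\im\phi$ to land in $\Split$. Writing $\phi_a=(s_a,r_a)$ for the components of the natural transformation, so that $s_a\colon G(a)\to G_1(a)$, $r_a\colon F_1(a)\to F(a)$ and $r_as_a=\id$, we have $\pi_1\star\phi=(s_a)_{a\in\A}$ and $\pi_2\star\phi=(r_a)_{a\in\A}$. Recall that, since the objects of $\Split$ are of the form $(M,M)$, one has $G(a)=F(a)$ and $G_1(a)=F_1(a)$. Naturality of $\phi$ in $\Split$ unpacks into the two relations
\begin{equation}
s_aG^b_a={G_1}^b_a s_b,\qquad F^a_b r_a=r_b{F_1}^a_b
\end{equation}
for $b\leq a$, which I will call (N1) and (N2). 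First I would record that the retraction $r_as_a=\id$ forces $e_a:=s_ar_a$ to be an idempotent of $G_1(a)$ with $\im e_a=\im s_a$ and $\Ker e_a=\Ker r_a$, hence $G_1(a)=\im s_a\oplus\Ker r_a$; this splitting is the device that ties the two images together.

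Next I would build $\im(\pi_1\star\phi)$. Setting $P_a:=\im s_a\subseteq G_1(a)$, relation (N1) gives ${G_1}^b_a(P_b)=\im({G_1}^b_a s_b)=\im(s_aG^b_a)\subseteq P_a$, so the restrictions $\sigma^b_a:={G_1}^b_a|^{P_a}_{P_b}$ turn $(P_a)$ into a subfunctor of $G_1$; this is exactly $\im(\pi_1\star\phi)$, a functor $\A\to\Mod$. For $\im(\pi_2\star\phi)$ the image is taken in $\Mod^{op}$, i.e.\ as the quotient $G_1(a)/\Ker r_a$, which the splitting above identifies canonically with $P_a$. Thus at every object the two images carry one and the same module $P_a$, so $\im\phi(a)=(P_a,P_a)$ is a genuine object of $\Split$, and the presheaf structure transported onto $P_a$ is $\rho^a_b:=(e_b\,{F_1}^a_b)|^{P_b}_{P_a}$.

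It then remains to check that $(\sigma^b_a,\rho^a_b)$ is a morphism of $\Split$ and that these data are functorial. Functoriality of $\sigma$ is inherited from $G_1$; for $\rho$, combining (N2) with $r_bs_b=\id$ yields $e_c{F_1}^b_c e_b=s_cF^b_c r_b=e_c{F_1}^b_c$ for $c\leq b\leq a$, whence $\rho^b_c\rho^a_b=\rho^a_c$, and $\rho^a_a=e_a|_{P_a}=\id$. The decisive point is the split relation: for $y=s_bx\in P_b$ one computes $\sigma^b_a(y)=s_aG^b_a x$ by (N1), and then
\begin{equation}
\rho^a_b\sigma^b_a(y)=e_b{F_1}^a_b s_aG^b_a x=s_b\,F^a_b r_a s_a\,G^b_a x=s_b\,F^a_b G^b_a\,x=s_bx=y,
\end{equation}
using (N2), then $r_as_a=\id$, and finally the retraction $F^a_bG^b_a=\id$ of the source. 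Hence $\rho^a_b\sigma^b_a=\id$, so $(\sigma^b_a,\rho^a_b)$ indeed lies in $\Split$, and $\im\phi=(\im(\pi_1\star\phi),\im(\pi_2\star\phi))$ is a functor from $\A$ to $\Split$.

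The step I expect to be the main obstacle is the bookkeeping in the two middle paragraphs rather than any single deep idea: a priori $\im(\pi_1\star\phi)$ is a submodule of $G_1(a)$ while $\im(\pi_2\star\phi)$ is a quotient of $F_1(a)=G_1(a)$, and one must see that the splitting idempotent $e_a=s_ar_a$ identifies them, so that both slots of $\im\phi(a)$ are the same object of $\Mod$. Once this identification is in place, the split relation follows mechanically from (N1), (N2), $r_as_a=\id$ and $F^a_bG^b_a=\id$.
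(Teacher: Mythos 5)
Your proof is correct and follows the same basic strategy as the paper's (restrict the structure maps of $(G_1,F_1)$ to the componentwise images and verify the retraction identity there), but it is considerably more careful on the one point the paper elides: the paper's one-line proof treats $f^a_b$ as the plain restriction of ${F_1}^a_b$ to $\im s_a$, which is not obviously well-defined since ${F_1}^a_b$ need not carry $\im s_a$ into $\im s_b$; your identification of the $\Mod^{op}$-image (the coimage $G_1(a)/\ker r_a$) with $P_a=\im s_a$ via the idempotent $e_a=s_ar_a$, and the resulting corrected structure maps $\rho^a_b=(e_b{F_1}^a_b)|^{P_b}_{P_a}$, supply exactly the missing justification. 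The only cosmetic difference in the final computation is that the paper closes with the target's retraction ${F_1}^a_b{G_1}^b_a=\id$ (so that $\rho^a_b\sigma^b_a(v)=e_b(v)=v$ on $P_b$), whereas you route through the source's retraction $F^a_bG^b_a=\id$ together with $r_as_a=\id$; both are valid.
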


\begin{proof}
Let $g=\im \pi_1\star\phi$, $f=\im\pi_2\star\phi$, for any $a,b\in \A$ such that $a\geq b$, and $v\in g(a)$, $f^a_bg^b_a(v)= {F_1}^a_b{G_1}^b_a(v)= v$.\\

\end{proof}

\begin{prop}
Let $I$ be any set and $\left((G_i,F_i), i\in I\right)$ a collection of functors from $\A$ to $\Split$; $(\underset{i\in I}{\bigoplus}G_i,\underset{i\in I}{\bigoplus}F_i)$ and $(\underset{i\in I}{\prod}G_i,\underset{i\in I}{\prod}F_i)$ are functors from $\A$ to $\Split$.

\end{prop}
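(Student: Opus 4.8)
The plan is to reduce everything to the characterization recalled in the remark preceding the statement: a couple of functor/presheaf $(G,F)$ from $\A$ to $\Mod$ underlies a functor from $\A$ to $\Split$ if and only if $F^a_bG^b_a=\id$ for every $a,b\in\A$ with $b\leq a$. Since each $(G_i,F_i)$ is by hypothesis a functor from $\A$ to $\Split$, we already know ${F_i}^a_b{G_i}^b_a=\id$ for all $i\in I$ and all $b\leq a$. The whole content is therefore to verify that these pointwise splitting identities assemble into a single splitting identity for the direct sum, respectively the direct product, couple.

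First I would record that the direct sum and direct product of a family of $\Mod$-valued functors (resp. presheaves) are again functors (resp. presheaves). This is immediate from the fact that $\underset{i\in I}{\bigoplus}$ and $\underset{i\in I}{\prod}$ are functors $\Mod^I\to\Mod$, so that precomposing the componentwise family with them yields honest functors $\A\to\Mod$; in particular the transition map of $\underset{i\in I}{\bigoplus}G_i$ along $b\leq a$ is $\underset{i\in I}{\bigoplus}{G_i}^b_a$, and the transition map of $\underset{i\in I}{\bigoplus}F_i$ is $\underset{i\in I}{\bigoplus}{F_i}^a_b$. The analogous statement holds with $\prod$ in place of $\bigoplus$.

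Second, using that direct sum is a bifunctor, so that composition is computed componentwise, $(\underset{i}{\bigoplus}g_i)(\underset{i}{\bigoplus}f_i)=\underset{i}{\bigoplus}(g_if_i)$, I would compute, for each $b\leq a$,
\begin{equation}
\Big(\underset{i\in I}{\bigoplus}F_i\Big)^a_b\Big(\underset{i\in I}{\bigoplus}G_i\Big)^b_a=\underset{i\in I}{\bigoplus}\big({F_i}^a_b{G_i}^b_a\big)=\underset{i\in I}{\bigoplus}\id=\id.
\end{equation}
The identical computation with $\prod$ in place of $\bigoplus$ gives the splitting identity for the direct product couple. By the characterization recalled above, both $(\underset{i\in I}{\bigoplus}G_i,\underset{i\in I}{\bigoplus}F_i)$ and $(\underset{i\in I}{\prod}G_i,\underset{i\in I}{\prod}F_i)$ are functors from $\A$ to $\Split$.

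There is no serious obstacle here; the only point requiring care is the bookkeeping for an arbitrary index set $I$, where the coproduct and the product in $\Mod$ genuinely differ. One must make sure that the transition morphisms are the componentwise direct sum (resp. product) of the ${G_i}^b_a$ and ${F_i}^a_b$, and that composition of such morphisms is again computed componentwise; both facts are precisely the functoriality and bifunctoriality of $\bigoplus$ and $\prod$ on $\Mod$, and once these are invoked the splitting identity follows instantly from the pointwise identities.
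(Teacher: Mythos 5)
Your proof is correct and follows essentially the same route as the paper: both verify the splitting identity $F^a_bG^b_a=\id$ componentwise, the paper by an elementwise computation on the product and you by invoking (bi)functoriality of $\bigoplus$ and $\prod$ on $\Mod^I$. Your write-up is if anything slightly more complete, since the paper only spells out the product case explicitly.
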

\begin{proof}
Let $g=\underset{i\in I}{\prod}G_i$, $f=\underset{i\in I}{\prod}F_i$, let $a,b\in \A$ such that $b\leq a$, let $v\in  \underset{i\in I}{\prod}G_i(a)$, for any $j\in I$, $f^a_bg^b_a(v)(j)= {F_j}^a_b{G_j}^b_a(v_j)=v_j$.
\end{proof}

\begin{prop}\label{rel-split}
Let $(G,F)$ be a functor from a poset $\A$ to $\Split$, for any $a\geq b\geq c$,

$$F^a_bG^c_a=G^c_b$$

\end{prop}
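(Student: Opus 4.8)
The plan is to reduce the claim to the two defining properties of a functor $(G,F)$ from $\A$ to $\Split$: the covariant functoriality of $G$, and the splitting relation $F^a_b G^b_a = \id$ that holds for every comparable pair $b \leq a$ by definition of $\Split$.

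First I would use the functoriality of $G$ on the chain $c \leq b \leq a$. Since $G$ is a genuine covariant functor on the poset $\A$, it carries the composite of the arrows $c \to b$ and $b \to a$ to the composite of their images, so that $G^c_a = G^b_a G^c_b$. This is the single place where the covariance of $G$ enters, and it is the only nontrivial ingredient.

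Next I would substitute this factorisation into the left-hand side and regroup. Using associativity of composition,
$$F^a_b G^c_a = F^a_b \left( G^b_a G^c_b \right) = \left( F^a_b G^b_a \right) G^c_b,$$
and then invoke the splitting relation $F^a_b G^b_a = \id$ (valid because $b \leq a$) to collapse the bracketed factor, leaving exactly $G^c_b$, which is the desired identity.

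There is no genuine obstacle here beyond bookkeeping the variance conventions; the only point requiring care is checking that every composite is well typed. Concretely I would note that $G^c_a \colon G(c) \to G(a)$, that $F^a_b \colon F(a) = G(a) \to G(b) = F(b)$ since $F$ is a presheaf and the underlying objects of $G$ and $F$ coincide, and hence that both $F^a_b G^c_a$ and $G^c_b$ are morphisms $G(c) \to G(b)$. Once the sources and targets are seen to match, the equality follows immediately from the two structural identities above.
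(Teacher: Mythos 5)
Your argument is correct and coincides with the paper's one-line proof: factor $G^c_a = G^b_a G^c_b$ by functoriality of $G$, then cancel $F^a_b G^b_a = \id$ using the splitting relation. The extra care about variance and typing is fine but does not change the substance.
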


\begin{proof}
$F^a_b G^c_a= F^a_b G^b_a G^c_b= G^c_b$.

\end{proof}

\begin{prop}\label{epi-to-split}
Let $(G,F)$ be a functor from $\A$ to $\Split$ and $(G_1,F_1)$ a functor from $\A$ to $\Mod\times \Mod^{op}$; if there is an epimorphism $\phi$ from $(G,F)$ to $(G_1,F_1)$ then $(G_1,F_1)$ is a functor from $\A$ to $\Split$.
\end{prop}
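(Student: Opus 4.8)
The claim is that if $(G,F)$ is a functor $\A \to \Split$, $(G_1,F_1)$ a functor $\A \to \Mod \times \Mod^{op}$, and $\phi$ an epimorphism from $(G,F)$ to $(G_1,F_1)$, then $(G_1,F_1)$ already lands in $\Split$. Let me think about what "epimorphism" means here and what I'd need to verify.

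An object of $\Split$ is a pair $(M,M)$ together with the data that morphisms $(s,r)$ satisfy $rs = \id$. A functor $(G_1,F_1)$ lands in $\Split$ iff for all $b \le a$, $F_1{}^a_b G_1{}^b_a = \id$. So the whole content is: I need to deduce $F_1{}^a_b G_1{}^b_a = \id$ from the corresponding relation for $(G,F)$ plus the existence of the epimorphism $\phi$.

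Let me write $\phi = (\phi, \psi)$ in components via the two projections — actually since $\phi$ is a natural transformation $(G,F) \to (G_1,F_1)$ in $\Mod \times \Mod^{op}$, by the remark preceding it $\phi_1 := \pi_1 \star \phi$ is natural $G \to G_1$ and $\phi_2 := \pi_2 \star \phi$ is natural $F_1 \to F$ (note the direction reversal in the second component). For each $a$, $\phi$ being an epimorphism should mean componentwise epimorphism: $\phi_1{}_a : G(a) \to G_1(a)$ is epi and $\phi_2{}_a : F_1(a) \to F(a)$ is epi in $\Mod^{op}$, i.e. mono in $\Mod$. Actually in $\Mod \times \Mod^{op}$ an epi is an epi in each factor, and an epi in $\Mod^{op}$ is a mono in $\Mod$. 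So $\phi_1{}_a$ is a surjection and $\phi_2{}_a$ is an injection.

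Now the strategy. For $b \le a$ I want $F_1{}^a_b G_1{}^b_a = \id_{G_1(b)}$. I have naturality squares. From $\phi_1$ natural for $G \to G_1$: $G_1{}^b_a \circ \phi_1{}_b = \phi_1{}_a \circ G^b_a$. From $\phi_2$ natural for $F_1 \to F$ (a morphism in $\Mod$, so a presheaf map): $\phi_2{}_b \circ F_1{}^a_b = F^a_b \circ \phi_2{}_a$. I also know $F^a_b G^b_a = \id$ for the source functor. The natural move is to compute $\phi_2{}_b F_1{}^a_b G_1{}^b_a$ and try to push it, via the two naturality squares and the splitting relation on $(G,F)$, to $\phi_2{}_b$, then cancel $\phi_2{}_b$ on the left using that it's a monomorphism. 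But wait — $\phi_1$ and $\phi_2$ connect $G_1$ to $G$ and $F_1$ to $F$ in possibly mismatched directions, so I have to be careful whether the two components of the single natural transformation $\phi$ are genuinely tied together.

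**Key steps and the obstacle.** First, unwind the meaning of an epimorphism $\phi$ in the functor category to get that for each object $a$, the component is an epimorphism in $\Mod \times \Mod^{op}$, hence $\phi_1{}_a$ surjective and $\phi_2{}_a$ injective. Second, write out the two naturality squares above and the relation $F^a_b G^b_a = \id$. Third, the crucial point: since $\phi = (\phi_1,\psi)$ is a single morphism in $\Split(\cat C)^{\A}$ — though here the target is only assumed in $\Mod \times \Mod^{op}$ — I expect the hypothesis forces $\psi_a = \phi_1{}_a^{-1}$-type compatibility only on the source. The honest approach is to exhibit, for each $a$, that the component of $\phi$ is a split epi whose existence transports the splitting relation. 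Concretely I would compute
\[
\phi_2{}_b \, F_1{}^a_b\, G_1{}^b_a \,\phi_1{}_b
= F^a_b\,\phi_2{}_a\, G_1{}^b_a\,\phi_1{}_b
= F^a_b\,\phi_2{}_a\,\phi_1{}_a\, G^b_a,
\]
using the two naturality squares, and then identify $\phi_2{}_a \phi_1{}_a$ with $\id_{F(a)}$ or with the relevant map coming from the single-morphism structure, which reduces this to $F^a_b G^b_a\,(\cdots) = \id$-style cancellation.

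The main obstacle I anticipate is precisely this middle identification: I must pin down exactly how the two components $\phi_1$ and $\phi_2$ of the one epimorphism $\phi$ interact (whether $\phi_2{}_a \phi_1{}_a = \id$ holds, which would follow if $\phi$ were a morphism in $\Split$, but the target is only in $\Mod \times \Mod^{op}$). Resolving this will likely require reading "epimorphism" in $\Split$ rather than componentwise, and then using surjectivity of $\phi_1{}_b$ on the right and injectivity of $\phi_2{}_b$ on the left to cancel both outer factors, concluding $F_1{}^a_b G_1{}^b_a = \id_{G_1(b)}$ and hence that $(G_1,F_1)$ is a functor to $\Split$.
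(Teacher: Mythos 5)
Your computation is the right one, but the proof stops exactly at the point that carries all the content, and the obstacle you flag is not a technicality: it is essential. Under your literal reading of a morphism in $(\Mod\times\Mod^{op})^{\A}$ --- a pair $\phi_1:G\to G_1$ with surjective components and $\phi_2:F_1\to F$ with injective components, and no relation between them --- the statement is simply false. Take $\A=\{b<a\}$, $G(a)=G(b)=k$ with $G^b_a=F^a_b=\id$, and $G_1(a)=G_1(b)=k$ with ${G_1}^b_a$ multiplication by $2$ and ${F_1}^a_b=\id$ (with $2$ invertible and $\neq 1$ in $k$); then ${\phi_1}_b=1$, ${\phi_1}_a=2$, ${\phi_2}_a={\phi_2}_b=1$ are natural and surjective, resp.\ injective, yet ${F_1}^a_b{G_1}^b_a=2\neq\id$. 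So no cancellation argument can succeed without an extra hypothesis tying $\phi_1$ to $\phi_2$, and your proposal never supplies one.

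The paper resolves this by the convention set up in the remark preceding the proposition: a morphism $\phi$ from $(G,F)$ to $(G_1,F_1)$ is taken to be a single family $(\phi_a:G(a)\to G_1(a))$ natural for both the covariant and the contravariant structure, i.e.\ ${G_1}^b_a\phi_b=\phi_a G^b_a$ and ${F_1}^a_b\phi_a=\phi_b F^a_b$, with each $\phi_a$ an epimorphism. With that reading the proof is one line: ${F_1}^a_b{G_1}^b_a\phi_b={F_1}^a_b\phi_a G^b_a=\phi_b F^a_b G^b_a=\phi_b$, and one cancels the single epimorphism $\phi_b$ on the right. (Equivalently, if you insist on two components, the missing ingredient is precisely ${\phi_2}_a{\phi_1}_a=\id$, i.e.\ that each component of $\phi$ is itself a morphism of $\Split$; granting that, your cancellation of ${\phi_2}_b$ on the left and ${\phi_1}_b$ on the right does finish the argument.) As written, your proposal correctly identifies the gap but does not close it, so it is not a complete proof.
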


\begin{proof}
Let $a,b\in \A$ such that $b\leq a$, one has that ${F_{1}}^a_b {G_1}^b_a\phi_b= \phi_b F^a_b G^b_a= \phi_b$ and as $\phi_b$ is an epimorphism, ${F_{1}}^a_b {G_1}^b_a=\id$.
\end{proof}

\subsection{Decomposable functors to $\Split$}

\begin{defn}
Let $\A$ be any poset and $G:\A\to \Mod$ be a functor. For any $a,b,c\in \A$, let $G1[a\leq .](b)=G(b)$ if $b\geq a$ and otherwise $G1[a\leq .](b)=0$; for $a\leq c\leq b$, $G1[a\leq .]^c_b= G^c_b$ if $a\not\leq c$ and $c\leq b$, $G1[a\leq . ]^c_b=0$ (as $0$ is initial in $\Mod$).  Similarly let $F$ be a presheaf from $\A$ to $\Mod$, let for any $a,b\in \A$ and $a\leq b$, $F1[a\leq .](b)=F(b)$ and otherwise $F1[a\leq .](b)=0$, for $a\leq c\leq b$, $F1[a\leq .]^b_c= F^b_c$.

\end{defn}

\begin{prop}
Let $G:\A\to \Mod$ be a functor and $F:\A^{op}\to \Mod$ be a presheaf, then $G1[a\leq .]$ is a functor and $F1[a\leq .]$ is a presheaf. 
\end{prop}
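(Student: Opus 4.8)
The plan is to verify directly the two functoriality axioms — preservation of identities and of composition — for $G1[a\leq .]$, and then to transpose the argument verbatim to the contravariant setting for $F1[a\leq .]$. The entire verification reduces to a case analysis governed by whether the indices lie in the principal filter $U_a=\{b\in\A : a\leq b\}$, together with the fact that $0$ is a zero object of $\Mod$: for every module $M$ there is a unique morphism $0\to M$ and a unique morphism $M\to 0$ (the zero morphisms), and these compose to the zero morphism with anything.

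First I would check identities. For $b\geq a$ one has $G1[a\leq .](b)=G(b)$ and $G1[a\leq .]^b_b=G^b_b=\id_{G(b)}$ by functoriality of $G$; for $b\not\geq a$ one has $G1[a\leq .](b)=0$, whose only endomorphism is $\id_0=0$, so the identity axiom holds on the nose in both branches.

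Next, for composition I would fix a chain $c\leq d\leq b$ in $\A$ and show $G1[a\leq .]^d_b\circ G1[a\leq .]^c_d=G1[a\leq .]^c_b$. The key point is that $U_a$ is upward closed, so there are essentially two cases. If $a\leq c$, then also $a\leq d$ and $a\leq b$, all three objects coincide with the corresponding values of $G$, the three transition maps are $G^c_d$, $G^d_b$, $G^c_b$, and the required identity is exactly the functoriality of $G$. If $a\not\leq c$, then $G1[a\leq .](c)=0$, so both $G1[a\leq .]^c_d$ and $G1[a\leq .]^c_b$ are the (unique) zero morphism out of $0$; composing $G1[a\leq .]^d_b$ with the former again yields the zero morphism out of $0$, which equals the latter. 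Hence composition is preserved and $G1[a\leq .]$ is a functor.

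Finally, the argument for $F1[a\leq .]$ is the mirror image: the support is again the filter $U_a$, the nonzero transition maps are the presheaf maps $F^b_c$ for $a\leq c\leq b$, and every remaining transition map is forced to be a zero morphism (now typically of the form $M\to 0$). Reversing the direction of composition and repeating the same two-case analysis — where now $a\not\leq c$ makes the \emph{target} the zero object — shows that $F1[a\leq .]$ respects identities and contravariant composition, i.e.\ is a presheaf. I do not expect a genuine obstacle here; the statement is a routine zero-object bookkeeping (one may view $G1[a\leq .]$ and $F1[a\leq .]$ as the extensions by zero of the restrictions of $G$ and $F$ to $U_a$). The only point requiring care is keeping track of which of the source or target is the zero object in each branch, and invoking the zero-object structure of $\Mod$ to pin down the degenerate maps uniquely.
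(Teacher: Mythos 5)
Your argument is correct and is essentially the paper's own proof: a two-case analysis on whether the smallest element of the chain lies above $a$, using functoriality of $G$ (resp.\ $F$) in the first case and the initial/terminal nature of the zero object in the second. You are slightly more explicit than the paper in checking identities and in tracking which of source or target degenerates to $0$, but the route is the same.
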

\begin{proof}

Let $a,b,c,d\in \A$ such that $d\leq c\leq b$, if $a\leq d$ then $G1[a\leq .]^c_bG1[a \leq .]^d_c= G^c_bG^d_c=G^d_b= G1[a\leq .]^d_b$ and $F1[a\leq .]^c_dF1[a\leq .]^b_c= F^c_dF^b_c=F1[a\leq .]^b_d$; if $a\not\leq d$ then $G1[a\leq .]^c_bG1[a \leq .]^d_c=G1[a\leq .]^c_b0= G1[a\leq .]^d_b$, $F1[a\leq .]^c_dF1[a\leq .]^b_c= 0F1[a\leq  .]^b_c=F1[a\leq .]^b_d$.\\

\end{proof}

\begin{rem}
Let $G$ be functor from a poset $\A$ to a $\core(\Mod)$, let $(G^a:G(a)\to \colim G, a\in \A)$ be the colimit cone over $G$; $(G^a, a\in \A)$ is a monomorphism from $G$ to the constant functor $\colim G$. Therefore $G$ is isomorphic to the image of $(G^a, a\in \A)$, that we denote as $G_1$. Let us call the connected components of $\A$ the equivalence classes for the equivalence relation generated by the order on $\A$; for any connected component of $\A$, $C$, there is a module $M_C$ such that $G_1|_{C}$ is the constant functor $M_C$.
\end{rem}

\begin{defn*}[Decomposable functors to $\Split$]\label{Decomposable-presheaf}
Let $H$ be a functor from $\A$ to $\Split$. $H$ is decomposable when there is a collection $((G_a,F^a),a\in \A)$ of functors from $\A$ to $\Split(\core \Mod)$ such that,

\begin{equation}
H\cong (\prod_a G_a 1[a\leq .], \prod_a F_a 1[a\leq .])
\end{equation}

When $H$ is decomposable we shall call $(\prod_{a\in \A}G_a1[a\leq .],\prod_{a\in \A}F^a1[a\leq .])$ its decomposition and note it as $(\prod_{a\in \A}S_a,\prod_{a\in \A}S^a)$.
\end{defn*}

\begin{rem}
A presheaf from $\A$ to $\Mod$ is decomposable when there is a collection of presheaves, $W^a$, from $\A$ to $\Split(\core \Mod)$ such that $F$ is isomorphic to $\prod_{a\in \A} W^a1[a\leq .]$; in other words a presheaf is decomposable when there is a section $G$ of $F$ such that $(G,F):\A\to \Split$ is decomposable. In the example of the interaction decomposition for factor spaces, $G$ is the poset of factor subspaces indexed by the parts of a finite set and the presheaves that are considered are the projectors on the subspaces for a given scalar product; decomposability of $G$ is the subject of \cite{GS2}, decomposable couples $(G,F)$ are seen as specific projectors on $G$. This is an other motivation to the fact that we focus more on the definition of decomposable functors from a poset to $\Split$ than on the definition of decomposable presheaves.
\end{rem}

\begin{cor}\label{decomposable-components}
Let $H$ be a decomposable functor from $\A$ to $\Split$ and $(\prod_{a\in \A}S_a,\prod_{a\in \A}S^a)$ its decomposition, $(\prod_{a\in \A}S_a,\prod_{a\in \A}S^a)$ is a functor from $\A$ to $\Split$ and for any $a\in \A$, $(S_a,S^a)$ is too.

\end{cor}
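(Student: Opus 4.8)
The plan is to derive both assertions from two earlier structural facts: that the truncation $(-)1[a\leq .]$ sends a functor to a functor and a presheaf to a presheaf, and that a product of functors from $\A$ to $\Split$ is again a functor from $\A$ to $\Split$. I would prove the second (``pointwise'') assertion first, since the first then follows at once.

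By the definition of decomposability there is a collection $((G_a,F^a),a\in\A)$ of functors from $\A$ to $\Split(\core\Mod)$ with $(S_a,S^a)=(G_a1[a\leq .],F^a1[a\leq .])$. In particular each $(G_a,F^a)$ is a functor from $\A$ to $\Split$, so $(F^a)^\alpha_\beta(G_a)^\beta_\alpha=\id$ for all $\beta\leq\alpha$. The truncation proposition already gives that $G_a1[a\leq .]$ is a functor and $F^a1[a\leq .]$ is a presheaf from $\A$ to $\Mod$, so by the characterization of functors valued in $\Split$ it remains only to check the splitting relation
\[
(F^a1[a\leq .])^\alpha_\beta\,(G_a1[a\leq .])^\beta_\alpha=\id
\]
for every $\beta\leq\alpha$. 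I would verify this in the two cases dictated by the definition of $(-)1[a\leq .]$. If $a\leq\beta$ then $G_a1[a\leq .](\beta)=G_a(\beta)$ and the two truncated morphisms coincide with $(G_a)^\beta_\alpha$ and $(F^a)^\alpha_\beta$, so the composite equals $(F^a)^\alpha_\beta(G_a)^\beta_\alpha=\id$ by the splitting property of $(G_a,F^a)$. If $a\not\leq\beta$ then $G_a1[a\leq .](\beta)=0$, and the only endomorphism of the zero object is its identity, so the relation holds vacuously. This shows $(S_a,S^a)$ is a functor from $\A$ to $\Split$ (though not one valued in $\Split(\core\Mod)$, because of the zero values).

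For the first assertion I would then apply the proposition that a product of functors from $\A$ to $\Split$ is a functor from $\A$ to $\Split$ to the family $((S_a,S^a),a\in\A)$, yielding that $(\prod_{a\in\A}S_a,\prod_{a\in\A}S^a)$ is a functor from $\A$ to $\Split$; alternatively one could invoke Proposition \ref{epi-to-split}, taking the isomorphism $H\cong(\prod_aS_a,\prod_aS^a)$ as the required epimorphism. I do not anticipate a genuine obstacle: the whole argument is bookkeeping around the truncation construction, and the only point demanding care is the case $a\not\leq\beta$, where the splitting relation must be recognized as holding trivially at the zero object rather than being inherited from $(G_a,F^a)$.
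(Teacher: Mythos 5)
Your proof is correct, but it inverts the logical order of the paper's argument and verifies the componentwise claim by a different route. The paper first applies Proposition \ref{epi-to-split} to the isomorphism $H\cong(\prod_a S_a,\prod_a S^a)$ to get that the \emph{product} is a functor to $\Split$, and only then deduces the statement for each $(S_a,S^a)$ by evaluating the product identity $\prod_d {S^d}^b_c\,\prod_d {S_d}^c_b=\id$ at the $a$-th component, where it reads ${S^a}^b_c{S_a}^c_b(v_a)=v_a$. You instead prove the componentwise claim first, directly from the definition of the truncation $(-)1[a\leq .]$, splitting into the case $a\leq\beta$ (where the splitting relation is inherited from $(G_a,F^a)$) and the case $a\not\leq\beta$ (where it holds trivially at the zero object), and then assemble the product afterwards. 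Both arguments are sound and of comparable length; yours is slightly more self-contained for the componentwise part, since it does not route through the product and uses only the definition of the decomposition, while the paper's is slicker in reusing the already-established product relation and avoids the case analysis on whether $a\leq\beta$. Your parenthetical observation that $(S_a,S^a)$ lands in $\Split$ but not in $\Split(\core\Mod)$, because of the zero values, is accurate and worth keeping in mind.
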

\begin{proof}
By Proposition \ref{epi-to-split}, $(\prod_{a\in \A}S_a,\prod_{a\in \A}S^a)$ is a functor from $\A$ to $\Split$. Let $a,b,c\in \A$ such that $c\leq b$ and $v\in \underset{d\in \A}{\prod}S^d(c)$, $\underset{d\in \A}{\prod}{S^d}^b_c \underset{d\in \A}{\prod}{S_d}^c_b(v)(a)=v_a= {S^a}^b_c{S_a}^c_b(v_a)$. 
\end{proof}

\begin{rem}
Let $H$ be a decomposable functor from $\A$ to $\Split$ and $(\prod_{a\in \A}S_a,\prod_{a\in \A}S^a)$ its decomposition, for any $a,b,c\in \A$ such that $c\leq b$, ${{S_a}^c_b}^{-1}={{S^a}^b_c}$.

\end{rem}

\subsection{Bringing back $F|_{\hat{a}}$ in $F(a)$}

\begin{prop}\label{def-right-coupling}
Let $\A$ be a poset and $(G,F)$ a functor from $\A$ to $\Split$, for any $a,b,c\in \A$ such that $c\leq b\leq a$,

\begin{equation}\label{good-couple-1}
F^b_c (ker G^b_a) \subseteq \ker G^c_a\quad \quad F^a_b(\im G^c_a)= \im G^c_b
\end{equation}
\end{prop}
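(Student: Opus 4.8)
The plan is to prove the two inclusions in Equation (\ref{good-couple-1}) separately, using the two defining relations of a functor $(G,F)$ to $\Split$: functoriality (of both $G$ and $F$) and the splitting relation $F^a_b G^b_a = \id$, together with its consequence $F^a_b G^c_a = G^c_b$ established in Proposition \ref{rel-split}. Both statements are elementary diagram-chases, so the main work is simply choosing the right composite to apply.

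For the first inclusion, $F^b_c(\ker G^b_a)\subseteq \ker G^c_a$, I would take $v\in \ker G^b_a$, i.e. $G^b_a(v)=0$, and show $G^c_a(F^b_c(v))=0$. The natural thing is to relate $G^c_a$ to $G^b_a$ via $G^c_a = G^b_a G^c_b$ (functoriality of $G$, since $c\leq b\leq a$). This does not directly involve $F^b_c$, so instead I would aim to produce $G^b_a$ applied to something. Applying $G^b_a$ after a suitable factor and using the splitting is the route: observe that $G^c_a F^b_c = G^b_a G^c_b F^b_c$, and $G^c_b F^b_c$ is (by functoriality of $G$ and Proposition \ref{rel-split} read backwards) compatible with the projector $G^b_c F^b_c$ acting on $\im G$. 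The cleanest argument is: for $v\in\ker G^b_a$, write $G^c_a F^b_c(v) = G^b_a\bigl(G^c_b F^b_c(v)\bigr)$; since $G^c_b F^b_c$ is the projector onto $\im G^c_b$ and $v$ may be taken in $\im G^b_a$-free form, one reduces to checking that the projector sends $v$ into a space annihilated by $G^b_a$. I expect this first inclusion to be the main obstacle, precisely because $F^b_c$ and $G^c_a$ do not compose along a single chain, so one must insert the idempotent $G^b_c F^b_c$ and argue that it preserves the kernel.

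For the second equality, $F^a_b(\im G^c_a)=\im G^c_b$, both inclusions are quick. For $\subseteq$, any element of $\im G^c_a$ has the form $G^c_a(w)$, and by Proposition \ref{rel-split}, $F^a_b G^c_a(w) = G^c_b(w)\in \im G^c_b$, giving $F^a_b(\im G^c_a)\subseteq \im G^c_b$. For $\supseteq$, take $G^c_b(w)\in\im G^c_b$; using Proposition \ref{rel-split} again, $G^c_b(w) = F^a_b G^c_a(w)$, which exhibits $G^c_b(w)$ as the image under $F^a_b$ of the element $G^c_a(w)\in\im G^c_a$. Hence $\im G^c_b\subseteq F^a_b(\im G^c_a)$, and the two are equal.

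In summary, the second statement follows immediately from Proposition \ref{rel-split}, so it is essentially free, while the first requires one extra idea — inserting the idempotent $G^c_b F^b_c$ (or equivalently observing $G^c_a F^b_c = G^b_a G^c_b F^b_c$ and using that $G^c_b F^b_c$ restricts to the identity on $\im G^c_b\subseteq \im G^b_a$) to route the kernel condition through the chain $c\leq b\leq a$. I would present the easy equality first to warm up the use of Proposition \ref{rel-split}, then handle the kernel inclusion. No deep machinery is needed; the entire proof is a composition-tracking argument relying only on functoriality and the splitting identity.
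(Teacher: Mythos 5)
Your treatment of the second equality, $F^a_b(\im G^c_a)=\im G^c_b$, is correct and is exactly the paper's argument: both inclusions fall out of $F^a_bG^c_a=G^c_b$ (Proposition \ref{rel-split}).

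For the first inclusion, however, you have missed the one observation that makes it trivial, and the substitute argument you sketch does not close. Since $F^a_bG^b_a=\id$, the map $G^b_a$ is a split monomorphism, so $\ker G^b_a=0$: for $v\in\ker G^b_a$ one has $v=F^a_bG^b_a(v)=F^a_b(0)=0$. Hence $F^b_c(\ker G^b_a)=\{0\}\subseteq\ker G^c_a$ with nothing further to check — this is precisely what the paper's proof does. Your proposed route instead writes $G^c_aF^b_c(v)=G^b_a\bigl(G^c_bF^b_c(v)\bigr)$ and then says one "reduces to checking that the projector sends $v$ into a space annihilated by $G^b_a$". That reduction is never carried out, and it cannot be carried out except by the injectivity observation you are trying to avoid: the only element annihilated by $G^b_a$ is $0$, so you would still have to show $G^c_bF^b_c(v)=0$, which again only follows because $v$ itself is $0$. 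The phrase "$v$ may be taken in $\im G^b_a$-free form" does not correspond to any actual step. So the first inclusion, which you flag as "the main obstacle", is in fact vacuous, and your argument for it as written is incomplete; replacing it with the one-line injectivity remark repairs the proof and brings it into agreement with the paper.
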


\begin{proof}
For any $a,b,c\in\A$ such that $c\leq b\leq a$, $F^a_bG^b_a=\id$, therefore $\ker G^b_a=0$; $F^a_bG^c_a=G^c_b$ (Proposition \ref{rel-split}) therefore $F^a_b(\im G^c_a)= \im G^c_b$.\\

\end{proof}

\begin{defn}
Let $\A$ be a poset and $(G,F)$ a functor from $\A$ to $\Split$, let $R(\alpha,a)=\im G^a_{\alpha}$, let $\alpha\geq b\geq c$, let us call $R^{ \alpha b }_{ \alpha c}: \im G^b_{\alpha} \rightarrow \im G^c_{\alpha}$ the unique morphism that satisfies $R^{\alpha b}_{\alpha c}{G^b_{\alpha}}|^{R(\alpha, b)} ={G^c_{\alpha}}|^{R(\alpha, c)} F^b_c$ and for $\alpha \geq \beta \geq a$, let $R^{\alpha a}_{\beta a}: R(\alpha, a) \to R(\beta, a)$ be such that $R^{\alpha a}_{\beta a}={F^{\alpha}_{\beta}}|^{R(\beta,a)}_{R(\alpha, a)}$.\\

For $\alpha\geq \beta \geq a\geq b$, $R(\alpha,b)\subseteq R(\alpha,a)$, we shall note the inclusion as $L^{\alpha b}_{\alpha a}$; let $L^{\beta a}_{\alpha a}=G^{\beta}_{\alpha}|^{R(\alpha,a)}_{R(\beta,a)}$.\\

We shall call $L(G,F)$ the left coupling of $(G,F)$ and $R(G,F)$ its right coupling.

\end{defn}

\begin{proof}
By Proposition \ref{def-right-coupling} $R$ is well defined. For any $a\geq b\geq c$, $G^{b}_{a}G^{c}_{b}=G^{c}_{a}$, therefore $R(a,c)=\im G^{c}_{a}\subseteq \im G^{b}_{a}= R(a,b)$, and $G^b_a(\im G^c_b)=  \im G^c_a$ and $L$ is well defined.
\end{proof}


\begin{rem}
Let $(G,F)$ be a functor from $\A$ to $\Split$, for any $\alpha \in \A$, $G_{|\hat{\alpha}}$ induces a functor monomorphism $G^{\hat{\alpha}}_\alpha: G_{|\hat{\alpha}}\to G(a)$ and a presheaf monomorphism ${G^{\hat{\alpha}}_\alpha}^{F}:F_{\hat{\alpha}}\to  F(\alpha)$; $L(\alpha,.)=\im G^{\hat{\alpha}}_\alpha$, $R(\alpha,.)=\im{ {G^{\hat{\alpha}}_\alpha}^{F} }$. Furthermore, $G^{\hat{\alpha}}_\alpha|^{L(\alpha,.)}$ and ${G^{\hat{\alpha}}_\alpha}^{F} |^{R(\alpha,.) }$ are isomorphisms. 
\end{rem}

\begin{prop} \label{functor-extension}
Let $(G,F)$ be a functor from $\A$ to $\Split$, $L$ its left coupling, $R$ its right coupling; for any  $\alpha,\beta,a,b\in \A$ such that $\alpha\geq \beta \geq a\geq b$,

\begin{equation}
L^{\beta a}_{\alpha a}L^{\beta b}_{ \beta a}= L^{ \alpha b}_{ \alpha a}L^{\beta b}_{\alpha b}
\end{equation}

\begin{equation}\label{good-couple-2}
R^{\beta a}_{\beta b}R^{\alpha a}_{\beta a}=R^{\alpha b}_{\beta b}R^{\alpha a}_{\alpha b}
\end{equation}

\end{prop}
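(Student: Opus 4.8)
The plan is to verify both commutation relations by evaluating each side on elements, exploiting that every element of $R(\alpha,a)=\im G^a_\alpha$ is of the form $G^a_\alpha(x)$ for some $x\in G(a)$. Throughout I would use that the objects of $\Split$ are pairs $(M,M)$, so that $G(a)=F(a)$ as a single module and $F^a_b$ may be applied to elements produced by $G$; this is exactly what makes the defining relation of the right coupling, which mixes $G$ and $F$, legible.

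First I would dispatch the left-coupling identity, which is the easy one. Both composites $L^{\beta a}_{\alpha a}L^{\beta b}_{\beta a}$ and $L^{\alpha b}_{\alpha a}L^{\beta b}_{\alpha b}$ are maps $R(\beta,b)\to R(\alpha,a)$. By definition the maps $L^{\cdot b}_{\cdot a}$ are inclusions and the maps $L^{\beta\cdot}_{\alpha\cdot}$ are restrictions of the single structure morphism $G^\beta_\alpha$. Since inclusions act as the identity on underlying elements, evaluating on $v=G^b_\beta(x)$ and invoking functoriality $G^\beta_\alpha G^b_\beta=G^b_\alpha$ collapses both sides to $G^b_\alpha(x)$, so the square commutes essentially by bookkeeping.

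The right-coupling identity (Equation~\ref{good-couple-2}) carries the real content. I would evaluate on a generic element $G^a_\alpha(x)\in R(\alpha,a)$. On the left-hand side, the second-type map $R^{\alpha a}_{\beta a}$ is a restriction of $F^\alpha_\beta$, so by Proposition~\ref{rel-split} (in the form $F^\alpha_\beta G^a_\alpha=G^a_\beta$) it sends $G^a_\alpha(x)$ to $G^a_\beta(x)$; then the first-type map $R^{\beta a}_{\beta b}$, via its defining relation $R^{\beta a}_{\beta b}(G^a_\beta(x))=G^b_\beta(F^a_b(x))$, yields $G^b_\beta(F^a_b(x))$. On the right-hand side, the first-type map $R^{\alpha a}_{\alpha b}$ sends $G^a_\alpha(x)$ to $G^b_\alpha(F^a_b(x))$ by the same defining relation, and then the second-type map $R^{\alpha b}_{\beta b}$, again a restriction of $F^\alpha_\beta$, sends this to $F^\alpha_\beta(G^b_\alpha(F^a_b(x)))=G^b_\beta(F^a_b(x))$ using Proposition~\ref{rel-split} once more. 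Both composites reduce to $G^b_\beta(F^a_b(x))$, which proves the relation.

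The main obstacle is not computation but bookkeeping: keeping the domains and codomains of the two kinds of coupling morphisms straight, reading the defining relation of the first-type maps through the identification $G(a)=F(a)$, and justifying that testing on the elements $G^a_\alpha(x)$ suffices. The last point is legitimate precisely because each $R$-morphism is defined on an image $R(\alpha,a)=\im G^a_\alpha$, so its effect is determined by its values on the $G^a_\alpha(x)$. Once the types are fixed, both identities are immediate consequences of Proposition~\ref{rel-split} together with the two defining relations of the coupling.
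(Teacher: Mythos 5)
Your proposal is correct and follows essentially the same route as the paper: both identities are checked on elements of the form $G^a_\alpha(x)$ (legitimate since $R(\alpha,a)=\im G^a_\alpha$), using that the same-first-index $L$'s are inclusions, the same-second-index $L$'s and $R$'s are restrictions of $G^\beta_\alpha$ and $F^\alpha_\beta$, the defining relation $R^{\alpha a}_{\alpha b}G^a_\alpha=G^b_\alpha F^a_b$, and Proposition \ref{rel-split}, so that both sides of each square collapse to $G^\beta_\alpha(v)$ and $G^b_\beta F^a_b(x)$ respectively. This matches the paper's argument step for step.
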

\begin{proof}

Let $\alpha\geq \beta\geq a\geq b$, for any $v\in R(\beta,b)$, $L^{\beta a}_{\alpha a}L^{\beta b}_{\beta a}(v)=G^{\beta}_{\alpha}(v)= L^{\alpha b}_{ \alpha a}L^{\beta b}_{\alpha b}(v)$.\\

For $v\in F(a)$, $v_1\in F(b)$, $R^{\alpha a}_{\beta a}G^a_\alpha(v)= F^\alpha_\beta G^a_\alpha(v)= G^a_\beta(v)$ and $ R^{\alpha b}_{\beta b} G^b_\alpha(v_1)=G^b_\beta (v_1)$; $R^{\beta a}_{\beta b}R^{\alpha a}_{\beta a}G^a_\alpha(v)=R^{\beta a}_{\beta b}G^a_\beta(v)$, $R^{\alpha b}_{\beta b} R^{\alpha a }_{\alpha b} G^a_\alpha (v)= R^{\alpha b}_{\beta b} G^b_\alpha F^a_b(v)=G^b_\beta F^a_b(v)$, by construction $G^b_\beta F^a_b(v)= R^{\beta a}_{\beta b}G^a_\beta $.

\begin{equation}
\begin{tikzpicture}[baseline=(current  bounding  box.center),node distance=2cm, auto]
\node (A) {$F(a)$ };
\node (B) [right of=A] {$R(\alpha,a)$};
\node (C) [right of =B] {$R(\beta,a)$};
\node (D) [below of=A ] {$F(b)$};
\node (E) [right of =D] {$R(\alpha,b)$};
\node (F) [right of =E] {$R(\beta,b)$};
\draw[->] (A) to node {$G^a_{\alpha}|^{R(\alpha,a)}$} (B);
\draw[->] (B) to node {$R^{\alpha a}_{\beta a}$} (C);
\draw[->] (D) to node [below] {$G^b_{\alpha}|^{R(\alpha,b)}$} (E);
\draw[->] (E) to node [below] {$R^{\alpha b}_{\beta b}$} (F);
\draw[->] (A) to node {$F^a_b$} (D);
\draw[->] (B) to node {$R^{\alpha a}_{\alpha b}$} (E);
\draw[->] (C) to node {$R^{ \beta a}_{\beta b}$} (F);
\end{tikzpicture}
\end{equation}

\end{proof}

\begin{defn}

Let $\A_1$ be the subposet of $\A\times \A$ constituted of couples $(\alpha, a)$ such that $a\leq \alpha$.

\end{defn}

\begin{prop}\label{extension-functor-poset}
Let $\A$ be a poset, $\cat{C}$ be any category; let $M_1=\{\left((\alpha,a),(\alpha,b)\right): (\alpha,a),(\alpha,b)\in \A_1 \text{ and } a\geq b\}$, $M_2=\{\left((\alpha,a),(\beta,a)\right): (\alpha,a),(\beta,a)\in \A_1 \text{ and } \alpha\geq \beta\}$, and let $(G^{i}_{j}; i,j\in M_1\cup M_2: i\leq j)$ be such that for any $(\alpha,a),(\alpha,b),(\alpha,c)\in \A_1$ such that $(\alpha,a)\geq (\alpha,b)\geq (\alpha,c)$, 
 $$G^{\alpha b}_{\alpha a}G^{\alpha c}_{\alpha b}= G^{\alpha c}_{\alpha a}$$
 
 for any $(\alpha,a),(\beta,a),(\gamma,a)$ such that $(\alpha,a)\geq (\beta,a)\geq (\gamma,a)$,
 
 $$G^{\beta a}_{\alpha a} G^{\gamma a}_{\beta a}= G^{\gamma a}_{\alpha a}$$

and for any $(\alpha, a),(\alpha, b),(\beta, a),(\beta, b)$ such that $(\alpha, a)\geq (\alpha, b)\geq (\beta, b)$ and $(\alpha, a)\geq (\beta, a)\geq (\beta, b)$, i.e $\alpha\geq \beta \geq a\geq b$,

$$G^{\beta a}_{\alpha a} G^{\beta b}_{\beta a}= G^{\alpha b}_{\alpha a}G^{\beta b}_{\alpha b}$$

Then $G$ extends into a unique functor $G_1:\A_1\to \cat{C}$, we shall also denote this extension as $G$.

\end{prop}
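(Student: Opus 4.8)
The plan is to treat $\A_1$ as a thin category and to define the extension by a single canonical factorization of each comparable pair into the generators $M_1\cup M_2$. The object assignment is forced (each $(\alpha,a)$ must go to the common source and target of the generators attached to it), and by relations (i)--(ii) the reflexive edges $G^{\alpha a}_{\alpha a}$ act as units, so we read them as identities; the only real content is the image of a strict comparison. If $(\beta,b)\leq(\alpha,a)$ in $\A_1$ --- equivalently $\beta\leq\alpha$ and $b\leq a$, with $b\leq\beta$ and $a\leq\alpha$ automatic from membership in $\A_1$ --- then $(\alpha,b)$ again lies in $\A_1$ since $b\leq a\leq\alpha$, and the pair factors as the $M_2$-edge $(\beta,b)\to(\alpha,b)$ followed by the $M_1$-edge $(\alpha,b)\to(\alpha,a)$. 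I would set
$$G_1\big((\beta,b)\to(\alpha,a)\big):=G^{\alpha b}_{\alpha a}\,G^{\beta b}_{\alpha b}.$$
The symmetric factorization through $(\beta,a)$ exists only when $a\leq\beta$, which is exactly the hypothesis of relation (iii); this asymmetry is why I factor through $(\alpha,b)$ uniformly. Specializing to $\beta=\alpha$ and to $b=a$ shows $G_1$ restricts to the given generators, so it does extend the data.

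The heart of the proof is functoriality. Given $(\gamma,c)\leq(\beta,b)\leq(\alpha,a)$ I would expand both factors by the definition above and compute
$$G_1\big((\beta,b)\to(\alpha,a)\big)\,G_1\big((\gamma,c)\to(\beta,b)\big)=G^{\alpha b}_{\alpha a}\,\big(G^{\beta b}_{\alpha b}\,G^{\beta c}_{\beta b}\big)\,G^{\gamma c}_{\beta c}.$$
The parenthesized block is a ``column-then-row'' traversal of the square with corners $(\beta,c),(\beta,b),(\alpha,c),(\alpha,b)$; applying relation (iii) with the substitution $a\leftarrow b$, $b\leftarrow c$ (legitimate since $\alpha\geq\beta\geq b\geq c$) rewrites it as the ``row-then-column'' traversal $G^{\alpha c}_{\alpha b}\,G^{\beta c}_{\alpha c}$. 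After this swap the expression becomes $\big(G^{\alpha b}_{\alpha a}G^{\alpha c}_{\alpha b}\big)\big(G^{\beta c}_{\alpha c}G^{\gamma c}_{\beta c}\big)$, and relations (i) and (ii) collapse the two consecutive $M_1$-edges at level $\alpha$ and the two consecutive $M_2$-edges at column $c$ to give $G^{\alpha c}_{\alpha a}\,G^{\gamma c}_{\alpha c}=G_1\big((\gamma,c)\to(\alpha,a)\big)$, as required. I expect this reordering step --- recognizing the middle block as one diagonal of the coherence square so that (iii) applies --- to be the only genuine obstacle; everything else is bookkeeping with (i) and (ii).

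Finally, uniqueness follows from thinness: since every comparable pair of $\A_1$ factors into generators along the path above, any functor extending $G$ must send each pair to the composite of those generators, hence agrees with $G_1$. Thus $G_1$ is well defined, functorial, and is the unique extension of $G$ to $\A_1$.
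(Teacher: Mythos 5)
Your proposal is correct and follows essentially the same route as the paper: both define the extension by the forced factorization ${G_1}^{\beta b}_{\alpha a}=G^{\alpha b}_{\alpha a}G^{\beta b}_{\alpha b}$ through the intermediate object $(\alpha,b)$, verify composition by using relation (iii) to commute the middle block and then collapsing with (i) and (ii), and deduce uniqueness from the fact that every comparison in $\A_1$ factors through the generators. Your additional observation that the symmetric factorization through $(\beta,a)$ need not exist is a correct refinement the paper leaves implicit.
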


\begin{proof}
Let us remark that for any $(\alpha, a),(\beta,b)\in \A_1$ such that $(\alpha,a)\geq (\beta,b)$, then $(\alpha,a)\geq (\alpha,b)\geq (\beta,b)$. Let $G_1:\A_1\to \cat{C}$ be a functor such that for any $(\alpha,a)\geq (\alpha, b)$, ${G_1}^{\alpha a}_{\alpha b}=G^{\alpha a}_{\alpha b}$ and for any $(\alpha,a)\geq (\beta,a)$, ${G_1}^{\beta a}_{\alpha a}=G^{\beta a}_{\alpha a}$, then for any $(\alpha,a)\geq (\beta,b)$, ${G_1}_{\alpha a}^{\beta b}= G^{\alpha b}_{\alpha a}G^{\beta b}_{\alpha b}$. Therefore there can be only one functor that extends $G$ to $\A_1$.\\

Let for any $(\alpha,a)\geq (\beta,b)$, ${G_1}_{\alpha a}^{\beta b}= G^{\alpha b}_{\alpha a} G^{\beta b}_{\alpha b}$.\\

\begin{equation}
\begin{tikzpicture}[baseline=(current  bounding  box.center),node distance=2cm, auto]
\node (A) {$G(\beta,b)$ };
\node (B) [below of=A] {$G(\alpha, b)$};
\node (C) [right of =B] {$G(\alpha, a)$};
\draw[->] (A) to node [left]{$G^{\beta b}_{\alpha b}$} (B);
\draw[->] (B) to node [below]{$G^{\alpha b}_{\alpha a}$} (C);
\draw[->] (A) to node {${G_1}^{\beta b}_{\alpha a}$} (C);
\end{tikzpicture}
\end{equation}

For any $(\alpha,a)\geq (\beta,b)\geq (\gamma, c)$,

$${G_1}^{\beta b}_{\alpha a}{G_1}^{\gamma c}_{\beta b}= G^{\alpha b}_{\alpha a} G^{\beta b}_{\alpha b}G^{\beta c}_{\beta b}G^{\gamma c}_{\beta c}= G^{\alpha b}_{\alpha a} G^{\alpha c}_{\alpha b}G^{\beta c}_{\alpha c}G^{\gamma c}_{\beta c}= G^{\alpha c}_{\alpha a}G^{\gamma c}_{\alpha c}$$

\begin{equation}
\begin{tikzpicture}[baseline=(current  bounding  box.center),node distance=2cm, auto]
\node (A) {$G(\gamma,c)$ };
\node (B) [below of=A] {$G(\beta,c)$};
\node (C) [below of =B] {$G(\alpha,c)$};
\node (D) [right of=B ] {$G(\beta,b)$};
\node (E) [right of =C] {$G(\alpha,b)$};
\node (F) [right of =E] {$G(\alpha,a)$};
\draw[->] (A) to node [left] {$G^{\gamma c}_{\beta c} $} (B);
\draw[->] (B) to node [left] {$G^{\beta c}_{\alpha c}$} (C);
\draw[->] (B) to node {$G^{\beta c}_{\beta b}$} (D);
\draw[->] (D) to node [left]{$G^{\beta b}_{\alpha b}$} (E);
\draw[->] (C) to node [below]{$G^{\alpha c}_{\alpha b}$} (E);
\draw[->] (E) to node [below]{$G^{\alpha b}_{\alpha a}$} (F);
\draw[->] (A) to node {${G_1}^{\gamma c}_{\beta b}$} (D);
\draw[->] (D) to node {${G_1}^{\beta b}_{\alpha a}$} (F);
\end{tikzpicture}
\end{equation}

\end{proof}


\begin{rem}\label{Presheaf-extension}
Let $\A$ be a poset, if one applies Proposition $\ref{extension-functor-poset}$   for $\A^{op}$ then one extends $G$ to a presheaf. 

\end{rem}

\begin{cor}\label{extension-left-right}
Let $(G,F)$ be a functor from $\A$ to $\Split$, let $(L,R)$ be its left and right coupling; $(L,R)$ has a unique extension into a functor from $\A$ to $\Split$.

\end{cor}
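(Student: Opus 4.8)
The plan is to build the extension coordinate by coordinate, applying Proposition~\ref{extension-functor-poset} once to the covariant datum $L$ and once, through Remark~\ref{Presheaf-extension}, to the contravariant datum $R$, and then to check that the two extensions cohere into a single functor valued in $\Split$.

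First I would extend $L$. The morphisms $L^{\alpha b}_{\alpha a}$ and $L^{\beta a}_{\alpha a}$ are given on the generators $M_1\cup M_2$ of $\A_1$, so it is enough to verify the three hypotheses of Proposition~\ref{extension-functor-poset}. The identity $L^{\alpha b}_{\alpha a}L^{\alpha c}_{\alpha b}=L^{\alpha c}_{\alpha a}$ holds because the $L^{\alpha b}_{\alpha a}$ are the inclusions $R(\alpha,b)\subseteq R(\alpha,a)$; the identity $L^{\beta a}_{\alpha a}L^{\gamma a}_{\beta a}=L^{\gamma a}_{\alpha a}$ is the restriction of the functoriality $G^{\beta}_{\alpha}G^{\gamma}_{\beta}=G^{\gamma}_{\alpha}$; and the mixed compatibility is exactly the first identity of Proposition~\ref{functor-extension}. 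Hence $L$ extends uniquely to a functor $L_1:\A_1\to\Mod$.

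Second, I would extend $R$ by applying the same proposition to $\A^{op}$, as prescribed by Remark~\ref{Presheaf-extension}. The two non-mixed identities for $R^{\alpha a}_{\alpha b}$ and $R^{\alpha a}_{\beta a}$ reduce, via the defining relation of the right coupling, to the functorialities $F^{b}_{c}F^{a}_{b}=F^{a}_{c}$ and $F^{\beta}_{\gamma}F^{\alpha}_{\beta}=F^{\alpha}_{\gamma}$, while the mixed compatibility is Equation~(\ref{good-couple-2}) of Proposition~\ref{functor-extension}; hence $R$ extends uniquely to a presheaf $R_1:\A_1^{op}\to\Mod$. Since $L_1$ and $R_1$ agree on objects, with common value $\im G^a_\alpha$ at $(\alpha,a)$, it remains only to check the splitting relation ${R_1}^{\alpha a}_{\beta b}{L_1}^{\beta b}_{\alpha a}=\id$ for every morphism $(\beta,b)\leq(\alpha,a)$ of $\A_1$, which is what promotes the pair $(L_1,R_1)$ to a functor into $\Split$.

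The main point, and the one step that is not a direct invocation of the preceding two propositions, is this splitting relation. I would establish it first on the generators: for $(\alpha,b)\leq(\alpha,a)$ one computes $R^{\alpha a}_{\alpha b}L^{\alpha b}_{\alpha a}=\id$ from $G^b_\alpha=G^a_\alpha G^b_a$ together with $F^a_bG^b_a=\id$, and for $(\beta,b)\leq(\alpha,b)$ one computes $R^{\alpha b}_{\beta b}L^{\beta b}_{\alpha b}=\id$ from $F^{\alpha}_{\beta}G^b_\alpha=G^b_\beta$ (Proposition~\ref{rel-split}). Factoring an arbitrary $(\beta,b)\leq(\alpha,a)$ through $(\alpha,b)$ yields ${R_1}^{\alpha a}_{\beta b}{L_1}^{\beta b}_{\alpha a}=R^{\alpha b}_{\beta b}\,R^{\alpha a}_{\alpha b}\,L^{\alpha b}_{\alpha a}\,L^{\beta b}_{\alpha b}$, and inserting the two generator identities telescopes this to $\id$. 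Finally, uniqueness of the extension is inherited from the uniqueness clause of Proposition~\ref{extension-functor-poset} applied separately to $L$ and to $R$: any functor $\A_1\to\Split$ restricting to $(L,R)$ on the generators has $L_1$ and $R_1$ as its two components.
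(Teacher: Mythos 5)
Your proposal is correct and follows essentially the same route as the paper: extend $L$ and $R$ separately via Proposition~\ref{extension-functor-poset} (using Proposition~\ref{functor-extension} for the mixed compatibilities), verify the splitting relation on the two families of generating morphisms, and telescope through the intermediate object $(\alpha,b)$ to get $R^{\alpha a}_{\beta b}L^{\beta b}_{\alpha a}=\id$ in general. The only cosmetic difference is that the paper verifies the vertical composition law for $R$ and the generator splittings by evaluating against the surjection $G^a_\alpha$, where you invoke the defining relation of the right coupling and Proposition~\ref{rel-split} directly; these amount to the same computation.
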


\begin{proof}
By construction for any $(\alpha,a), (\beta,a),(\gamma,a)\in \A_1$ such that $(\alpha,a)\geq (\beta,a)\geq (\gamma,a)$, $L^{\beta a}_{\alpha a}L^{\gamma a}_{\beta a}= L^{\gamma a}_{\alpha a}$, and for any $(\alpha, a),(\alpha, b), (\alpha, c)\in \A_1$ such that $(\alpha, a)\geq (\alpha, b)\geq (\alpha, c)$, $L^{\alpha b}_{\alpha a}L^{\alpha c}_{\alpha b}= L^{\alpha c}_{\alpha a}$. Therefore by Proposition \ref{functor-extension} and Proposition \ref{extension-functor-poset}, $L$ extends into a unique functor from $\A_1$ to $\Mod$. \\

By construction for any $(\alpha,a), (\beta,a),(\gamma,a)\in \A_1$ such that $(\alpha,a)\geq (\beta,a)\geq (\gamma,a)$, $R^{\beta a}_{\gamma a}R^{\alpha a}_{\beta a}=R^{\alpha a}_{\gamma a}$. Let $(\alpha, a),(\alpha, b), (\alpha, c)\in \A_1$ such that $(\alpha, a)\geq (\alpha b)\geq (\alpha c)$ and $v\in G(a)$, $R^{\alpha b}_{\alpha c} R^{\alpha a}_{\alpha b} G^a_\alpha(v) = R^{\alpha b}_{\alpha c} G^{b}_\alpha F^a_b(v)= G^c_\alpha F^a_c(v)= R^{\alpha a}_{\alpha c} G^a_\alpha(v)$; as $G^a_\alpha$ is surjective, $ R^{\alpha b}_{\alpha c} R^{\alpha a}_{\alpha b} = R^{\alpha a}_{\alpha c}$. Therefore by Proposition \ref{functor-extension} and Proposition \ref{extension-functor-poset}, $R$ extends into a unique presheaf from $\A_1$ to $\Mod$.\\

For $\alpha\geq a\geq b$ and $v\in G(b)$, $R^{\alpha a}_{\alpha b}L^{\alpha b}_{\alpha a}G^b_\alpha(v)=R^{\alpha a}_{\alpha b}G^a_\alpha G^b_a(v)= G^b_\alpha F^a_b G^b_a(v)= G^b_\alpha(v)$, as $G^b_\alpha$ is surjective, $R^{\alpha a}_{\alpha b}L^{\alpha b}_{\alpha a}=\id $; for $\alpha \geq \beta \geq a$ and $v\in R(\beta,a)$, $R^{\alpha a}_{\beta a}L^{\beta a}_{\alpha a}(v)=F^\alpha_\beta G^\beta_\alpha(v)=v$; therefore for any $(\alpha,a)\geq (\beta,b)$,
\begin{equation}
R^{\alpha a}_{\beta b} L^{\beta b}_{\alpha a}= R^{\alpha b}_{\beta b}R^{\alpha a}_{\alpha b} L^{\alpha b}_{\alpha a}L^{\beta b}_{\alpha b}= R^{\alpha b}_{\beta b}L^{\beta b}_{\alpha b}=\id
\end{equation}

\end{proof}

\begin{rem}\label{morphism-functor-to-extension}
For $(G,F)$ a functor from $\A$ to $\Split$, and $(\alpha, a)\in \A_1$, $L^{a a}_{\alpha \alpha}= G^a_{\alpha}$, $R^{\alpha \alpha}_{a a}=F^{\alpha}_{a}$. Indeed, let us recall that $R(a,a)=F(a)$, $R^{\alpha \alpha}_{\alpha a}= L^{a a}_{\alpha a} F^{\alpha}_a$ and $R^{\alpha \alpha}_{a a}=R^{\alpha a}_{a a}R^{\alpha \alpha}_{\alpha a}= R^{\alpha a}_{a a}L^{a a}_{\alpha a} F^{\alpha}_a$; $R^{\alpha a}_{a a}=F^{\alpha}_{a} |_{R(\alpha, a)}=F^{\alpha}_{a}L^{\alpha a}_{\alpha \alpha}$; so $R^{\alpha \alpha}_{a a}= F^{\alpha}_{a}L^{\alpha a}_{\alpha \alpha}L^{a a}_{\alpha a} F^{\alpha}_a= F^{\alpha}_a L^{a a}_{\alpha\alpha}F^{\alpha}_a=F^{\alpha}_a G^a_\alpha F^\alpha_a=F^\alpha_a$.

\end{rem}

\begin{cor}
Let $(G,F)$ be a functor from $\A$ to $\Split$, for any $\alpha,\beta,a\in \A$ such that $\alpha \geq \beta \geq a$, $L^{\beta a}_{\alpha a}$ is an isomorphism, its inverse is $R^{\alpha a}_{\beta a}$.

\end{cor}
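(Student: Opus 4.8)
The plan is to verify directly that the two composites $R^{\alpha a}_{\beta a}\circ L^{\beta a}_{\alpha a}$ and $L^{\beta a}_{\alpha a}\circ R^{\alpha a}_{\beta a}$ are the identities of $R(\beta,a)$ and $R(\alpha,a)$ respectively. By the defining formulas, $L^{\beta a}_{\alpha a}$ is the corestriction of $G^{\beta}_{\alpha}$ to a map $R(\beta,a)\to R(\alpha,a)$, and $R^{\alpha a}_{\beta a}$ is the corestriction of $F^{\alpha}_{\beta}$ to a map $R(\alpha,a)\to R(\beta,a)$; thus the two composites are simply the restrictions of the ambient endomorphisms $F^\alpha_\beta G^\beta_\alpha$ and $G^\beta_\alpha F^\alpha_\beta$ to the relevant images.

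One direction is already contained in the proof of Corollary \ref{extension-left-right}: for $v\in R(\beta,a)$ one has $R^{\alpha a}_{\beta a}L^{\beta a}_{\alpha a}(v)=F^\alpha_\beta G^\beta_\alpha(v)=v$, since $(G,F)$ is a functor to $\Split$ and therefore $F^\alpha_\beta G^\beta_\alpha=\id$. Hence $R^{\alpha a}_{\beta a}L^{\beta a}_{\alpha a}=\id_{R(\beta,a)}$ with no further work.

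For the reverse composite I would exploit that every element of $R(\alpha,a)=\im G^a_\alpha$ has the form $w=G^a_\alpha(u)$. Applying $R^{\alpha a}_{\beta a}=F^\alpha_\beta|$ and Proposition \ref{rel-split} gives $R^{\alpha a}_{\beta a}(w)=F^\alpha_\beta G^a_\alpha(u)=G^a_\beta(u)\in R(\beta,a)$, and then applying $L^{\beta a}_{\alpha a}=G^\beta_\alpha|$ together with the functoriality relation $G^\beta_\alpha G^a_\beta=G^a_\alpha$ yields $L^{\beta a}_{\alpha a}R^{\alpha a}_{\beta a}(w)=G^\beta_\alpha G^a_\beta(u)=G^a_\alpha(u)=w$. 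Thus $L^{\beta a}_{\alpha a}R^{\alpha a}_{\beta a}=\id_{R(\alpha,a)}$, and combining the two identities shows that $L^{\beta a}_{\alpha a}$ is an isomorphism with inverse $R^{\alpha a}_{\beta a}$.

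There is no genuine obstacle in this argument; the only point requiring a moment of care is to confirm at the outset that the maps land in the asserted codomains so that the composites are defined on the nose. Specifically, $R^{\alpha a}_{\beta a}$ does map $R(\alpha,a)$ into $R(\beta,a)$ because $F^\alpha_\beta(\im G^a_\alpha)=\im G^a_\beta$, which is exactly the second identity of Proposition \ref{def-right-coupling} (equivalently the content of Proposition \ref{rel-split}); once this is noted, the verification reduces to the two short calculations above.
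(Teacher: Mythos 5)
Your proof is correct and follows essentially the same route as the paper: both arguments reduce to the restrictions of $G^\beta_\alpha$ and $F^\alpha_\beta$ and the identities $F^\alpha_\beta G^\beta_\alpha=\id$, $F^\alpha_\beta G^a_\alpha=G^a_\beta$ and $G^\beta_\alpha G^a_\beta=G^a_\alpha$ (the paper phrases the second half as injectivity plus surjectivity of $L^{\beta a}_{\alpha a}$ and then cites the composite $R^{\alpha a}_{\beta a}L^{\beta a}_{\alpha a}=\id$ already established in Corollary \ref{extension-left-right}, whereas you check both composites directly, which is the same computation).
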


\begin{proof}
For any $\alpha \geq \beta \geq a$, $L^{\beta a}_{\alpha a}= G^\beta_\alpha|^{\im G^a_\alpha}_{\im G^a_\beta}$ and as $G^\beta_\alpha$ is injective and $G^\beta_\alpha G^a_\beta= G^a_\alpha$, $\im L^{\beta a}_{\alpha a}= \im G^a_\alpha= L(\alpha,a)$; therefore $L^{\beta a}_{\alpha a}$ is an isomorphism. Furthermore by Corollary \ref{extension-left-right} $R^{\alpha a}_{\alpha b}$ is the inverse of $L^{\alpha b}_{\alpha a}$.

\end{proof}

\begin{defn}
Let $\A$ be any poset, let $(G,F)$ be a functor from $\A$ to $\Split$. For any $(\alpha,a)\in \A_1$, let $V(\alpha, a)=\underset{b\leq a}{\prod}G(\alpha)$ (which in the previous section we would note as $G(\alpha)_{\hat{a}}$). For any $\alpha,\beta,a,b$ such that $\alpha\geq \beta\geq a\geq b$ let ${V_r}^{\alpha a}_{\alpha b} = pr^{\underset{c\in \hat{a}}{\prod} G(\alpha)}_{\underset{c\in \hat{b}}{\prod} G(\alpha)}$, ${V_l}^{\alpha b}_{\alpha a}=  i^{\underset{c\in \hat{b}}{\prod} G(\alpha)}_{\underset{c\in \hat{a}}{\prod} G(\alpha)}$, ${V_r}^{\alpha a}_{\beta a}:V(\alpha,a)\to V(\beta,a)$ be such that for any $v\in V(\alpha,a)$ and $c\leq a$, ${V_r}^{\alpha a}_{\beta a}(v)(c)=F^{\alpha}_{\beta}(v_c)$, ${V_l}^{\beta a}_{\alpha a}:V(\beta,a)\to V(\alpha,a)$ be such that for any $v\in V(\beta,a)$ and $c\leq a$, ${V_l}^{\alpha a}_{\beta a}(v)(c)=G^{\beta}_{\alpha}(v_c)$. 

\end{defn}

\begin{prop}
Let $\A$, $(G,F)$ be a functor from $\A$ to $\Split$, $(\prod_{a\in \A}G1[a\leq .], \prod_{a\in \A}F1[a\leq.])$ is a functor from $\A$ to $\Split$. $(V_l, V_r)$ extends into a unique functor form $\A_1$ to $\Split$.

\end{prop}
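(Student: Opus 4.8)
The plan is to dispatch the two assertions in turn: the first is a direct instance of the product construction, and the second mirrors, coordinatewise, the argument already used for the left/right coupling in Corollary \ref{extension-left-right}.

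For the first assertion I would fix $a\in\A$ and first check that the single couple $(G1[a\leq .],F1[a\leq .])$ is itself a functor from $\A$ to $\Split$. That $G1[a\leq .]$ is a functor and $F1[a\leq .]$ a presheaf is the earlier proposition on the $1[a\leq .]$-truncation, so by the criterion characterizing functors to $\Split$ (a couple $(G,F)$ lands in $\Split$ exactly when $F^b_c G^c_b=\id$ for all $c\leq b$) it only remains to verify $F1[a\leq .]^b_c\, G1[a\leq .]^c_b=\id$ for $c\leq b$. When $a\leq c$ both arrows are the original ones and the identity reads $F^b_c G^c_b=\id$, which holds since $(G,F)$ is a functor to $\Split$; when $a\not\leq c$ the source $G1[a\leq .](c)$ is the zero module and the identity is trivial. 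Hence each $(G1[a\leq .],F1[a\leq .])$ is a functor to $\Split$, and applying the proposition that products of functors to $\Split$ are again functors to $\Split$, with index set $\A$, shows that $(\prod_a G1[a\leq .],\prod_a F1[a\leq .])$ is a functor from $\A$ to $\Split$.

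For the second assertion I would first extend $V_l$ and $V_r$ separately. To extend $V_l$ I verify the three hypotheses of Proposition \ref{extension-functor-poset}: the chain relations for fixed $\alpha$ hold because the maps ${V_l}^{\alpha b}_{\alpha a}$ are the coordinate inclusions $\prod_{c\leq b}G(\alpha)\hookrightarrow\prod_{c\leq a}G(\alpha)$ and compose as inclusions; the chain relations for fixed $a$ hold because the ${V_l}^{\beta a}_{\alpha a}$ act coordinatewise by $G^\beta_\alpha$ and compose by functoriality of $G$; and the square relation holds because applying $G^\beta_\alpha$ coordinatewise commutes with the coordinate inclusion (both composites send a coordinate $c\leq b$ through $G^\beta_\alpha$ and leave the coordinates $b<c\leq a$ at $0$). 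Thus $V_l$ extends uniquely to a functor $\A_1\to\Mod$. Dually, $V_r$ is assembled from coordinate projections and the coordinatewise action of $F^\alpha_\beta$; the same three relations hold (the square being the commutation of a projection with the coordinatewise $F^\alpha_\beta$), so by Remark \ref{Presheaf-extension}, i.e.\ Proposition \ref{extension-functor-poset} applied to $\A^{op}$, $V_r$ extends uniquely to a presheaf $\A_1\to\Mod$.

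It then remains to promote $(V_l,V_r)$ to a functor into $\Split$, i.e.\ to check ${V_r}^{\alpha a}_{\beta b}{V_l}^{\beta b}_{\alpha a}=\id$ for every $(\alpha,a)\geq(\beta,b)$ in $\A_1$. I would verify this on the two generating families first: for $\alpha\geq a\geq b$ one has ${V_r}^{\alpha a}_{\alpha b}{V_l}^{\alpha b}_{\alpha a}=\id$, since a projection after the matching inclusion is the identity, and for $\alpha\geq\beta\geq a$ one has ${V_r}^{\alpha a}_{\beta a}{V_l}^{\beta a}_{\alpha a}=\id$, since this acts coordinatewise by $F^\alpha_\beta G^\beta_\alpha=\id$. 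A general morphism factors as $(\alpha,a)\geq(\alpha,b)\geq(\beta,b)$, so using the induced factorizations of ${V_l}^{\beta b}_{\alpha a}$ and ${V_r}^{\alpha a}_{\beta b}$ through $(\alpha,b)$ and cancelling the middle factor ${V_r}^{\alpha a}_{\alpha b}{V_l}^{\alpha b}_{\alpha a}=\id$ reduces the claim to the fixed-$\alpha$ generator, exactly as in the final computation of Corollary \ref{extension-left-right}. This gives the unique extension of $(V_l,V_r)$ into a functor $\A_1\to\Split$. The only genuinely delicate points are the square relations and the bookkeeping of the coordinate sets $\hat{a}$ versus $\hat{b}$, ensuring that the coordinates outside $\hat{b}$ are sent to or from $0$ consistently on both sides; everything else follows the template already established for $(L,R)$. (One could alternatively note that, componentwise in $c$, $V$ is the pullback of $(G,F)$ along the first projection $\A_1\to\A$ truncated in the second coordinate, reducing to the product and truncation lemmas already proved, but I expect the direct verification above to be cleanest.)
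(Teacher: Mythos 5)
Your proposal is correct and follows essentially the same route as the paper: the paper verifies $\prod_a F1[a\leq.]^b_c\,\prod_a G1[a\leq.]^c_b=\id$ by the same coordinatewise computation (noting in passing that it also gives the statement for each individual truncated couple, which you establish first and then feed into the product proposition), and it extends $(V_l,V_r)$ exactly as you do, by checking the three hypotheses of Proposition \ref{extension-functor-poset} for $V_l$ and $V_r$ separately and then verifying the splitting identity on the two generating families before factoring a general morphism through $(\alpha,b)$. The only nitpick is a naming slip at the end: after cancelling the middle factor ${V_r}^{\alpha a}_{\alpha b}{V_l}^{\alpha b}_{\alpha a}=\id$ the remaining composite ${V_r}^{\alpha b}_{\beta b}{V_l}^{\beta b}_{\alpha b}$ is the fixed-second-coordinate generator, not the fixed-$\alpha$ one, but since you verified both generators this does not affect the argument.
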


\begin{proof}

Let $b,c\in \A$ such that $c\leq b$, let us note $\prod_{a\in \A}F1[a\leq.]^b_c$ as $F$ and $\prod_{a\in \A}G1[a\leq .]^c_b$ as $G$; for any $v\in \prod_{a\in \A}G1[a\leq .](c)$, and $a\in \A$, $FG(v)(a)= F^b_cG^c_b(v_a) 1[a\leq b] 1[a\leq c]= v_a 1[a\leq c]= \id_{\prod_{a\in \A}G1[a\leq .](c)}(v)(a)(\cong \id_{\underset{\substack{a_1\in \A \\ a_1\leq c}}{\prod}G(c)}(v)(a))$ (this also shows that $(G1[a\leq.],F1[a\leq])$ is a functor from $\A$ to $\Split$). \\

For $\alpha \geq a\geq b\geq c$, ${V_l}^{\alpha b}_{\alpha a}{V_l}^{\alpha c}_{\alpha b}= {V_l}^{\alpha c}_{\alpha a}$, ${V_r}^{\alpha b}_{\alpha c}{V_r}^{\alpha a}_{\alpha b} = {V_r}^{\alpha a}_{\alpha c}$. For any $\alpha \geq \beta \geq \gamma\geq a$, any $v\in V(\alpha ,c)$, any $c\leq a$, ${V_l}^{\beta a}_{\alpha a}{V_l}^{\gamma a}_{\beta a}(v)(c)=G^\beta_\alpha G^\gamma_\alpha(v_c)=G^\gamma_\alpha(v_c)= {V_l}^{\gamma a}_{\alpha a}(v)(c)$ and for any $v\in V(\alpha, a)$, and $c\leq a$, ${V_r}^{\beta a}_{\gamma a}{V_r}^{\alpha a}_{\beta a}(v)(c)= F^\beta_\gamma F^\alpha_\beta(v_c)= F^\alpha_\gamma(v_c)={V_r}^{\alpha a}_{\gamma a}(v)(c)$.\\

Let $\alpha \geq \beta \geq a\geq b$, $v\in V(\beta, b)$; for any $c\leq a$, ${V_l}^{\beta a}_{\alpha a}{V_l}^{\beta b}_{\beta a}(v)(c)= G^\beta_\alpha({V_l}^{\beta b}_{\beta a}(v)(c))= G^\beta_\alpha(v_c 1[c\leq b])=G^\beta_\alpha(v_c) 1[c\leq b]$ and ${V_l}^{\alpha b}_{\alpha a}{V_l}^{\beta b}_{\alpha b}(v)(c)= {V_l}^{\beta b}_{\alpha b}(v)(c) 1[c\leq b]= G^\beta_\alpha(v_c) 1[c\leq b]$;
 similarly one has that ${V_r}^{\alpha b}_{\beta b}{V_r}^{\alpha a}_{\alpha b}= {V_r}^{\beta a}_{\beta b} {V_r}^{\alpha a}_{\beta a }$. Therefore by Proposition \ref{extension-functor-poset} $V_l$ extends to a functor from $\A_1$ to $\Mod$ and $V_r$ to a presheaf from $\A_1$ to $ \Mod$.\\ 

For any $\alpha \geq \beta\geq a$, $v\in V(\beta,a)$, $c\leq b$, ${V_r}^{\alpha a}_{\beta a}{V_l}^{\beta a}_{\alpha a}(v)(c)= F^\alpha_\beta G^\beta_\alpha(v_c)=v_c$; for any $\alpha \geq a\geq b$, ${V_r}^{\alpha a}_{\alpha b}{V_l}^{\alpha b}_{\alpha a}=\id$, therefore for any $(\alpha,a) \geq (\beta,b)$, ${V_r}^{\alpha a}_{\beta b}{V_l}^{\beta b}_{\alpha a}= {V_r}^{\alpha b}_{\beta b}{V_r}^{\alpha a}_{\alpha b} {V_l}^{\alpha b}_{\alpha a}{V_l}^{\beta b}_{\alpha b}= \id$.

\end{proof}

Until now in this subsection there was no constraint on $\A$, in order to be able to define $\zeta_{\hat{\alpha}}(G(\alpha))$ on $V(\alpha,\alpha)$ we will have to assume that $\hat{\alpha}$ is finite for any $\alpha\in \A$.\\

\begin{nota}\label{pf}
The class of posets that are such that $\hat{a}$ is finite for any $a\in \A$ will be denoted as $\Pf$.
\end{nota}

\begin{rem}
Let $\A\in \Pf$ and $(G_a,a\in \A)$ a collection of functors from $\A$ to $\Mod$, $\prod_{a\in \A}G_a1[a\leq .]= \underset{a\in \A}{\bigoplus}G_a1[a\leq.]$, indeed for any $b\in \A$, $\underset{a: a\leq b}{\prod}G_a(b)=\underset{a: a\leq b}{\bigoplus}G_a(b)$.
\end{rem}

\begin{prop}\label{zeta-extented}
Let $(G,F)$ be a functor from $\A\in \Pf$ to $\Split$, for any $(\alpha, a)\in \A_1$, let $\zeta(\alpha,a)=\zeta_{\hat{a}}(G(\alpha)):V(\alpha,a)\to V(\alpha,a)$ and $\mu(\alpha, a)= \mu_{\hat{a}}(G(\alpha)):V(\alpha,a)\to V(\alpha,a)$; $\zeta$, $\mu\in \Mod^{\A^{op}}\left(V_r,V_r\right)$.
\end{prop}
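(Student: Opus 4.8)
The plan is to show that the level-wise zeta and Möbius functions $\zeta=\big(\zeta(\alpha,a)\big)_{(\alpha,a)\in\A_1}$ and $\mu=\big(\mu(\alpha,a)\big)_{(\alpha,a)\in\A_1}$ are natural transformations of the presheaf $V_r$ to itself, i.e.\ that for every morphism $(\alpha,a)\geq(\beta,b)$ of $\A_1$ one has $\zeta(\beta,b)\,{V_r}^{\alpha a}_{\beta b}={V_r}^{\alpha a}_{\beta b}\,\zeta(\alpha,a)$ and likewise for $\mu$. Since naturality is stable under composition, and since every morphism of $\A_1$ factors as $(\alpha,a)\geq(\alpha,b)\geq(\beta,b)$ (the factorisation used in the proof of Proposition~\ref{extension-functor-poset}), it suffices to verify these squares for the two generating kinds of morphisms: the \emph{vertical} ones $(\alpha,a)\geq(\alpha,b)$ with $\alpha$ fixed and $a\geq b$, whose image under $V_r$ is the projection $\pr^{\hat a}_{\hat b}$ on $G(\alpha)$-valued families, and the \emph{horizontal} ones $(\alpha,a)\geq(\beta,a)$ with $a$ fixed and $\alpha\geq\beta\geq a$, whose image applies $F^{\alpha}_{\beta}$ componentwise.

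For the vertical morphisms I would simply invoke Proposition~\ref{commutation-zeta}. Indeed $\hat b\in\U(\hat a)$ because $b\leq a$, and $\zeta(\alpha,a)=\zeta_{\hat a}(G(\alpha))$, $\zeta(\alpha,b)=\zeta_{\hat b}(G(\alpha))$ are the zeta functions of $\hat a$ and $\hat b$ with coefficients in the \emph{same} module $G(\alpha)$. Applying Proposition~\ref{commutation-zeta} with $\A,\B$ replaced by $\hat a,\hat b$ and coefficients in $G(\alpha)$ gives $\zeta_{\hat b}\,\pr^{\hat a}_{\hat b}=\pr^{\hat a}_{\hat b}\,\zeta_{\hat a}$ and $\mu_{\hat b}\,\pr^{\hat a}_{\hat b}=\pr^{\hat a}_{\hat b}\,\mu_{\hat a}$, which are exactly the vertical naturality squares for $\zeta$ and for $\mu$.

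For the horizontal morphisms I would argue that the combinatorial zeta sum commutes with the componentwise application of the module homomorphism $F^{\alpha}_{\beta}\colon G(\alpha)\to G(\beta)$. Concretely, for $v\in V(\alpha,a)$ and $c\leq a$,
\[
\zeta(\beta,a)\big({V_r}^{\alpha a}_{\beta a}(v)\big)(c)=\sum_{d\leq c}F^{\alpha}_{\beta}(v_d)=F^{\alpha}_{\beta}\Big(\sum_{d\leq c}v_d\Big)={V_r}^{\alpha a}_{\beta a}\big(\zeta(\alpha,a)(v)\big)(c),
\]
using additivity of $F^{\alpha}_{\beta}$ in the middle step; this is the horizontal naturality of $\zeta$. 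The identical computation with the integer Möbius coefficients $\mu_{\hat a}(c,d)$ replacing the zeta sum --- these coefficients depend only on the poset $\hat a$, not on the coefficient module, by Proposition~\ref{Mobius-inversion} --- yields the horizontal naturality of $\mu$.

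Combining the two generating cases gives naturality for every morphism of $\A_1$, so $\zeta,\mu\in\Mod^{\A_1^{op}}(V_r,V_r)$. Alternatively, once $\zeta$ is known to be natural one obtains $\mu$ for free: by Proposition~\ref{mobius-inversion} each $\zeta(\alpha,a)$ is an isomorphism with inverse $\mu(\alpha,a)$, and the componentwise inverse of a natural isomorphism is automatically a natural transformation. I do not expect a genuine obstacle here; the only points requiring care are the reduction to the two generating morphism types and the recognition that vertical naturality is precisely the previously established Proposition~\ref{commutation-zeta}, the horizontal case being an immediate consequence of the additivity of $F^{\alpha}_{\beta}$.
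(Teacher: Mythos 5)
Your proof is correct and follows essentially the same route as the paper: reduce to the two generating morphism types of $\A_1$ via the factorisation $(\alpha,a)\geq(\alpha,b)\geq(\beta,b)$, handle the vertical squares by Proposition \ref{commutation-zeta}, handle the horizontal squares by the componentwise-linearity argument (which is exactly the paper's Lemma \ref{lemme-eta} with $l=F^{\alpha}_{\beta}$), and deduce the statement for $\mu$ from that for $\zeta$ by inverting. The only difference is cosmetic: you prove the horizontal commutation inline where the paper cites its separate lemma.
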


\begin{lem}\label{lemme-eta}
Let $V$,$V_1$ be two modules, $\A$ a finite poset, and $l:V\to V_1$ a linear application; let $L:V_\A\to {V_1}_\A$ be such that $L(v)(a)=l(v_a)$. $\zeta_{\A}(V_1)L=L\zeta_{\A}(V)$ and $\mu_{\A}(V_1)L=L\mu_{\A}(V)$.

\end{lem}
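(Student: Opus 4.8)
The plan is to establish the two commutation relations in sequence: first the one for the zeta function, by a direct componentwise computation that uses only the linearity of $l$, and then to deduce the one for the Möbius function by conjugating the first relation, exploiting that $\mu_\A$ is the two-sided inverse of $\zeta_\A$ (Proposition \ref{mobius-inversion}). Since $\A$ is finite, every sum of the form $\sum_{b\leq a}$ is a finite sum, so there are no convergence or well-definedness subtleties, and $L$ is plainly a well-defined linear map acting coordinatewise.

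First I would verify $\zeta_\A(V_1)L = L\zeta_\A(V)$ by evaluating both sides at an arbitrary $v \in V_\A$ and an arbitrary $a\in\A$. Unfolding the definitions, one side reads $\zeta_\A(V_1)(L(v))(a) = \sum_{b\leq a} L(v)(b) = \sum_{b\leq a} l(v_b)$, while the other reads $L(\zeta_\A(V)(v))(a) = l\big(\sum_{b\leq a} v_b\big)$. Because the sum is finite and $l$ is linear, $l\big(\sum_{b\leq a}v_b\big) = \sum_{b\leq a} l(v_b)$, so the two expressions coincide for every $a$, which gives the first identity.

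For the Möbius identity I would not recompute anything but simply conjugate the relation just obtained. Composing $\zeta_\A(V_1)L = L\zeta_\A(V)$ with $\mu_\A(V_1)$ on the left and $\mu_\A(V)$ on the right yields $\mu_\A(V_1)\zeta_\A(V_1)L\mu_\A(V) = \mu_\A(V_1)L\zeta_\A(V)\mu_\A(V)$. Applying $\mu_\A(V_1)\zeta_\A(V_1)=\id$ to the left-hand side and $\zeta_\A(V)\mu_\A(V)=\id$ to the right-hand side (Proposition \ref{mobius-inversion}) collapses this to $L\mu_\A(V) = \mu_\A(V_1)L$, which is exactly the claimed relation. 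One could instead argue directly from the explicit formula $\mu_\A(m)(a)=\sum_{b\leq a}\mu_\A(a,b)m_b$ together with linearity of $l$, but the conjugation argument is cleaner and reuses the zeta case verbatim.

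The argument is essentially mechanical, so there is no genuine obstacle beyond bookkeeping. The single point requiring care is keeping the two coefficient modules distinct: the zeta and Möbius functions appearing to the left of $L$ must be those valued in $V_1$, while those to the right are valued in $V$, and each inversion identity $\mu\zeta=\id$, $\zeta\mu=\id$ must be invoked for the correct module at the correct step. Finiteness of $\A$ is what guarantees that every sum $\sum_{b\leq a}$ is finite and hence commutes with the linear map $l$.
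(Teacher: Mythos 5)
Your proof is correct and follows exactly the same route as the paper: a componentwise computation using linearity of $l$ for the zeta identity, followed by conjugation with the Möbius inverses to deduce the second identity. No discrepancies to report.
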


\begin{proof}
For any $v\in V_\A$, $a\in \A$, $L(\zeta_{\A}(V)(v))(a)=l(\underset{b\leq a}{\sum}v_b)=\underset{b\leq a}{\sum}l(v_b)=\zeta_{\A}(V_1)(L(v))(a)$. Furthermore $\mu_{\A}(V_1)\zeta_{\A}(V_1)L\mu_{\A}(V) =\mu_{\A}(V_1)L\zeta_{\A}(V)\mu_{\A}(V)$ so $\mu_{\A}(V_1)L=L\mu_{\A}(V)$.
\end{proof}

\begin{proof} Proof of Proposition \ref{zeta-extented}.\\

For any $\alpha\geq a \geq b$, by Proposition \ref{commutation-zeta} $\zeta(\alpha,b){V_r}^{\alpha a}_{\alpha b}= {V_r}^{\alpha a}_{\alpha b} \zeta(\alpha,a)$. By Lemma \ref{commutation-zeta}, for any $\alpha \geq \beta\geq a$, $\zeta(\beta,a){V_r}^{\alpha a}_{\beta a}= {V_r}^{\alpha a}_{\beta a}\zeta(\alpha,a)$. Therefore for any $(\alpha,a),(\beta,b) \in \A_1$ such that $(\alpha,a)\geq (\beta,b)$, ${V_r}^{\alpha a}_{\beta b}\zeta(\alpha,a)= {V_r}^{\alpha b}_{\beta b}V^{\alpha a}_{\alpha b}\zeta(\alpha,a)= {V_r}^{\alpha b}_{\beta b}\zeta(\alpha,b) {V_r}^{\alpha a}_{\alpha b}=\zeta(\beta, b){V_r}^{\alpha b}_{\beta b}{V_r}^{\alpha a}_{\alpha b}$. Therefore ${V_r}^{\alpha a}_{\beta b}\mu(\alpha,a)= \mu(\beta,b){V_r}^{\alpha a}_{\beta b}$.

\end{proof}

\begin{rem}
$\zeta$, $\mu$ are in general not natural transformations from $V_l$ to $V_l$ because for $\alpha \geq a\geq b$, ${V_l}^{\alpha b}_{\alpha a}\zeta(\alpha, b)\neq \zeta(\alpha, a){V_l}^{\alpha b}_{\alpha a}$.
\end{rem}

\begin{nota}
For any $\alpha\in \A$, we shall note $\zeta(\alpha,\alpha)$ as $\zeta^{\alpha}$.
\end{nota}

\subsection{Intersection for functors from $\A$ to $\Split$}
\begin{defn}[Intersection property]\label{intersect-good-couple}
Let $(G,F)$ be a functor from $\A\in \Pf$ to $\Split$. For any $(\alpha, a)\in \A_1$, let $\pi^{\alpha\alpha}_{\alpha a}= L^{\alpha a}_{\alpha \alpha}  R^{\alpha\alpha}_{\alpha a}$, $\pi^{\alpha\alpha}_{\alpha a}$ is a projector. For a given $\alpha$, we shall denote this collection as $\pi^{\alpha}$. \\

$(G,F)$ is said to satisfy the intersection property for $\alpha\in \A$ if $\pi^{\alpha}$ satisfies the intersection property (I') 
and is said to satisfy the intersection property if is satisfies it for any $\alpha\in \A$.

\end{defn}
\begin{proof}
As $(L,R)$ is a functor from $\A_1$ to $\Split$ (Corollary \ref{extension-left-right}) ,

$${\pi^{\alpha \alpha}_{\alpha a}}^2= L^{\alpha a}_{\alpha \alpha}R^{\alpha \alpha}_{\alpha a}L^{\alpha a}_{\alpha \alpha}R^{\alpha \alpha}_{\alpha a}= L^{\alpha a}_{\alpha \alpha}R^{\alpha \alpha}_{\alpha a}$$.
\end{proof}

\begin{rem}

Let $(G,F)$ be a functor from $\A$ to $\Split$; let $(\alpha,a)\in \A_1$, let us remark that $\pi^{\alpha \alpha}_{\alpha a}=L^{\alpha a}_{\alpha \alpha}L^{a a }_{\alpha a} F^{\alpha}_a= L^{a a}_{\alpha \alpha} F^{\alpha}_{a}= G^a_{\alpha} F^{\alpha}_{a}$. Therefore the intersection property is equivalent to for any $(\alpha,a),\in \A_1$, for any $v\in  G(\alpha)$, $ \im (G^b_{\alpha}F^{\alpha}_{b}G^a_{\alpha}F^{\alpha}_a, b\in \hat{\alpha}) \subseteq \im \zeta^{\alpha}{V_l}^{\alpha a}_{\alpha \alpha}$. If $\A$ has all its intersections, the intersection property is equivalent to for any $(\alpha,a),(\alpha,b)\in \A_1$, $G^b_{\alpha}F^{\alpha}_b G^a_{\alpha}F^{\alpha}_a= G^{b\cap a}_{\alpha}F^{\alpha}_{b\cap a} $.
\end{rem}

\begin{rem}
Let $(G,F)$ be a functor from $\A$ to $\Split$, for any $\alpha\in \A$, $(\pi^{\alpha,\alpha}_{\alpha,a}, a\in \hat{\alpha})$ is presheafable as for any $v\in R(\alpha,\alpha)$ and $(\alpha,a),(\alpha,b)\in\A$, such that $b\leq a$, $\pi^{\alpha\alpha}_{\alpha b}(v)=\pi^{\alpha a}_{\alpha b}\pi^{\alpha \alpha}_{\alpha a}(v)$. 

\end{rem}

\begin{rem}
Let $(G,F)$ be a functor from $\A$ to $\Split$, for any $(\alpha, a)\in \A_1$, $\im \pi^{\alpha\alpha}_{\alpha a}= R(\alpha,a)$.
\end{rem}

\begin{prop}
Let $(G,F)$ be a functor from $\A$ to $\Split$, if $(G,F)$ verifies the intersection property for $\alpha$ then it verifies it for any $\beta\leq \alpha$.

\end{prop}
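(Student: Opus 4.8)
The plan is to reduce the intersection property for $\beta$ to the one for $\alpha$ by exhibiting the collection $\pi^{\beta}=(\pi^{\beta\beta}_{\beta a},a\leq\beta)$ as a conjugate of the restriction of $\pi^{\alpha}$ to the lower set $\hat{\beta}$. First I would record the conjugation identity: for $a\leq\beta\leq\alpha$, the preceding remark $\pi^{\alpha\alpha}_{\alpha a}=G^a_\alpha F^\alpha_a$, combined with the functoriality of $G$ and $F$ (so $G^a_\alpha=G^\beta_\alpha G^a_\beta$ and $F^\alpha_a=F^\beta_a F^\alpha_\beta$, cf. Proposition \ref{rel-split}), gives
\begin{equation*}
\pi^{\alpha}_a=G^\beta_\alpha\,\pi^{\beta}_a\,F^\alpha_\beta ,\qquad F^\alpha_\beta G^\beta_\alpha=\id_{G(\beta)},
\end{equation*}
the last equality holding because $(G,F)$ is a functor to $\Split$. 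Since the intersection property for a finite poset is equivalent to the orthogonality relation $s_as_b=\delta_a(b)s_a$ of the Möbius-inverted projections (the implication $\Rightarrow$ is Proposition \ref{chapitre3:thm1}; the converse follows from Lemma \ref{rem} by expanding $\pi_a\pi_b=(\sum_{c\leq a}s_c)(\sum_{d\leq b}s_d)$), it suffices to transport this orthogonality from $\alpha$ to $\beta$.

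Next I would handle the restriction to $\hat{\beta}$. As $\beta\leq\alpha$, the set $\hat{\beta}$ is a lower set of $\hat{\alpha}$, i.e. $\hat{\beta}\in\U(\hat{\alpha})$. Writing $s^\alpha_a(v)=[\mu_{\hat\alpha}\Pi_{\hat\alpha}(v)](a)$ for the decomposition attached to $\pi^\alpha$ and $s^\beta_a(v)=[\mu_{\hat\beta}\Pi_{\hat\beta}(v)](a)$ for the one attached to $\pi^\beta$, I would first show that, for $a\in\hat\beta$, the $s^\alpha_a$ are already computed inside $\hat\beta$. Indeed $\Pi_{\hat\beta}=\pr^{\hat\alpha}_{\hat\beta}\Pi_{\hat\alpha}$ on the restricted family $(\pi^\alpha_a,a\in\hat\beta)$, so the commutation relation $\mu_{\hat\beta}\pr^{\hat\alpha}_{\hat\beta}=\pr^{\hat\alpha}_{\hat\beta}\mu_{\hat\alpha}$ of Proposition \ref{commutation-zeta} yields, for $a\in\hat\beta$,
\begin{equation*}
[\mu_{\hat\beta}\,\pr^{\hat\alpha}_{\hat\beta}\Pi_{\hat\alpha}(v)](a)=[\pr^{\hat\alpha}_{\hat\beta}\mu_{\hat\alpha}\Pi_{\hat\alpha}(v)](a)=s^\alpha_a(v).
\end{equation*}
Thus the Möbius inversion over $\hat\beta$ of the restricted family returns exactly $(s^\alpha_a)_{a\in\hat\beta}$, and these still satisfy $s^\alpha_as^\alpha_b=\delta_a(b)s^\alpha_a$ for $a,b\in\hat\beta$ by the hypothesis at $\alpha$.

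Finally I would conjugate back. Because Möbius inversion over $\hat\beta$ is a linear combination with the integer coefficients $\mu_{\hat\beta}(a,c)$ and composition distributes over it, the conjugation identity gives, for $a\in\hat\beta$,
\begin{equation*}
s^\alpha_a=\sum_{c\leq a}\mu_{\hat\beta}(a,c)\,\pi^\alpha_c=G^\beta_\alpha\Bigl(\sum_{c\leq a}\mu_{\hat\beta}(a,c)\,\pi^\beta_c\Bigr)F^\alpha_\beta=G^\beta_\alpha\,s^\beta_a\,F^\alpha_\beta .
\end{equation*}
Using $F^\alpha_\beta G^\beta_\alpha=\id$ one then has $s^\alpha_a s^\alpha_b=G^\beta_\alpha\,s^\beta_a s^\beta_b\,F^\alpha_\beta$; substituting this into $s^\alpha_as^\alpha_b=\delta_a(b)s^\alpha_a$ and conjugating the other way (apply $F^\alpha_\beta$ on the left and $G^\beta_\alpha$ on the right, again using $F^\alpha_\beta G^\beta_\alpha=\id$) produces $s^\beta_a s^\beta_b=\delta_a(b)s^\beta_a$ for all $a,b\leq\beta$. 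This is exactly the orthogonality characterizing the intersection property for $\pi^\beta$, so $(G,F)$ satisfies it for $\beta$.

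The routine part is linear bookkeeping; the step I expect to carry the real weight is the restriction identity, namely that the decomposition of the restricted projectors over $\hat\beta$ coincides with the restriction of the decomposition over $\hat\alpha$. This is where the lower-set hypothesis $\hat\beta\in\U(\hat\alpha)$ is essential, and it is precisely what Proposition \ref{commutation-zeta} supplies; once it is in place, the conjugation through $(G^\beta_\alpha,F^\alpha_\beta)$ with $F^\alpha_\beta G^\beta_\alpha=\id$ is purely formal. When $\A$ is a meet semi-lattice the argument shortcuts entirely: the intersection property reads $\pi^\alpha_a\pi^\alpha_b=\pi^\alpha_{a\cap b}$, and since $a\cap b\leq\beta$ whenever $a,b\leq\beta$, conjugating this single identity through $(G^\beta_\alpha,F^\alpha_\beta)$ immediately gives $\pi^\beta_a\pi^\beta_b=\pi^\beta_{a\cap b}$.
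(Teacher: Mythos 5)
The paper states this proposition without any proof, so there is nothing to compare your argument against; on its own terms, your argument is correct and supplies the missing proof. The three ingredients all check out: the conjugation identity $\pi^{\alpha}_a=G^\beta_\alpha\pi^{\beta}_aF^\alpha_\beta$ for $a\leq\beta$ follows from functoriality of $G$ and $F$ together with Proposition \ref{rel-split}; the restriction identity $s^\alpha_a=\sum_{c\leq a}\mu_{\hat\beta}(a,c)\pi^\alpha_c$ for $a\in\hat\beta$ is exactly what Proposition \ref{commutation-zeta} gives, since $\hat\beta\in\U(\hat\alpha)$; and the equivalence of the intersection property with the orthogonality $s_as_b=\delta_a(b)s_a$ holds in both directions (Proposition \ref{chapitre3:thm1} one way, and the expansion $\pi_a\pi_b=\sum_{c\leq a}\sum_{d\leq b}s_cs_d$ combined with Lemma \ref{rem} the other way). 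The crucial point, which you handle correctly, is that the identity $F^\alpha_\beta G^\beta_\alpha=\id_{G(\beta)}$ is used in the order that actually collapses, namely in the middle of $s^\alpha_as^\alpha_b=G^\beta_\alpha s^\beta_a(F^\alpha_\beta G^\beta_\alpha)s^\beta_bF^\alpha_\beta$.

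One small gloss in your closing paragraph: in the meet semi-lattice shortcut, conjugating $\pi^\alpha_a\pi^\alpha_b=\pi^\alpha_{a\cap b}$ by $(F^\alpha_\beta,G^\beta_\alpha)$ does not \emph{immediately} produce $\pi^\beta_a\pi^\beta_b$ on the left, because the composite $G^\beta_\alpha F^\alpha_\beta=\pi^\alpha_\beta$ sitting in the middle of $\pi^\beta_a\pi^\beta_b=F^\alpha_\beta\pi^\alpha_a(G^\beta_\alpha F^\alpha_\beta)\pi^\alpha_bG^\beta_\alpha$ is a projector, not the identity. You need the extra (but automatic) observation that $\pi^\alpha_a\pi^\alpha_\beta=G^a_\alpha F^\alpha_aG^\beta_\alpha F^\alpha_\beta=G^a_\alpha F^\beta_aF^\alpha_\beta=\pi^\alpha_a$ for $a\leq\beta$, which follows from the $\Split$ relations alone. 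This is a one-line fix and does not affect the main argument.
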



\subsection{Equivalence between intersection property and decomposability}

\begin{prop}\label{limite-proj}
Let $(G,F)$ be a functor from $\A$ to $\Split$, for any $(\alpha,a)\in \A_1$ let us denote $R|_{(\alpha,\hat{a})}$ as $R^{\alpha,\hat{a}}$; $(R^{\alpha a}_{\alpha b},b\leq a)$ is a natural transformation from $R(\alpha,a)$ to $R^{\alpha,\hat{a}}$, let us denote $\phi(\alpha,a): R(\alpha,a)\to \underset{b}{\lim} R^{\alpha,\hat{a}}(b) $ its limit. One has that $(\underset{b}{\lim} R^{\alpha,\hat{a}}(b), (\alpha,a)\in \A_1)$ is a subobject of ${V_r}$ that we shall note as $M$ and $\phi:R\to M$ is a natural transformation, furthermore it is an isomorphism.

\end{prop}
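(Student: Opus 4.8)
The plan is to isolate the one genuinely conceptual fact---that the limit of a presheaf over a poset with a greatest element is canonically its value at that element---and then to verify, using only the functoriality relations already recorded for $R$, that this identification is compatible with the presheaf structure of $V_r$.

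First I would compute the limit explicitly. The restriction $R^{\alpha,\hat{a}}$ is a presheaf on $\hat{a}$, so $\lim_b R^{\alpha,\hat{a}}(b)$ is the submodule of $\prod_{b\leq a}R(\alpha,b)\subseteq\prod_{b\leq a}G(\alpha)=V(\alpha,a)$ of compatible families $(x_b)_{b\leq a}$, i.e.\ those with $R^{\alpha b}_{\alpha c}(x_b)=x_c$ for $c\leq b\leq a$. Since $a$ is the greatest element of $\hat{a}$, compatibility with the index $a$ forces $x_b=R^{\alpha a}_{\alpha b}(x_a)$, so the family is determined by its top component $x_a\in R(\alpha,a)$; conversely presheaf functoriality of $R$ shows $(R^{\alpha a}_{\alpha b}(x_a))_{b\leq a}$ is always compatible. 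Hence $(R^{\alpha a}_{\alpha b},b\leq a)$ is the limit cone and $\phi(\alpha,a)\colon x_a\mapsto(R^{\alpha a}_{\alpha b}(x_a))_{b\leq a}$ is a bijection onto it, with injectivity coming from $R^{\alpha a}_{\alpha a}=\id$ and surjectivity from the displayed formula. This is just the observation that $\hat{a}^{op}$ has $a$ as an initial object, whence its limit is the value $R(\alpha,a)$.

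Next I would exhibit $M(\alpha,a):=\lim_b R^{\alpha,\hat{a}}(b)$ as a subpresheaf of $V_r$ through the inclusion $\prod_{b\leq a}R(\alpha,b)\subseteq V(\alpha,a)$, by checking that the two generating families of transition maps of $V_r$ preserve $M$. For fixed $\alpha$ and $b\leq a$, the map ${V_r}^{\alpha a}_{\alpha b}$ is the projection onto the coordinates $c\leq b$, and a subfamily of a compatible family is compatible, so $M(\alpha,a)$ lands in $M(\alpha,b)$. For fixed $a$ and $\beta\leq\alpha$, the map ${V_r}^{\alpha a}_{\beta a}$ applies $F^\alpha_\beta$ coordinatewise; since $F^\alpha_\beta$ restricts to $R^{\alpha c}_{\beta c}$ on $R(\alpha,c)$, each coordinate lands in $R(\beta,c)$, and the required compatibility $R^{\beta c}_{\beta c'}(F^\alpha_\beta x_c)=F^\alpha_\beta(x_{c'})$ follows by rewriting $F^\alpha_\beta$ as $R^{\alpha c}_{\beta c}$, invoking the commutation relation \eqref{good-couple-2}, and using the compatibility $R^{\alpha c}_{\alpha c'}(x_c)=x_{c'}$. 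Thus $M$ is a subpresheaf of $V_r$.

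Finally I would check naturality of $\phi\colon R\to M$ and conclude. Splitting a morphism of $\A_1$ into the two types: for $(\alpha,a)\geq(\alpha,b)$ naturality of $\phi$ reduces to the presheaf identity $R^{\alpha b}_{\alpha c}R^{\alpha a}_{\alpha b}=R^{\alpha a}_{\alpha c}$, while for $(\alpha,a)\geq(\beta,a)$ it is again precisely \eqref{good-couple-2}. As each component $\phi(\alpha,a)$ is an isomorphism by the first step, $\phi$ is an isomorphism of presheaves. I expect the load-bearing step to be the compatibility bookkeeping in the construction of $M$ as a subpresheaf---that the coordinatewise action of $F^\alpha_\beta$ keeps families inside the subspaces $R(\beta,c)$ and respects compatibility---since this is exactly where \eqref{good-couple-2} (together with the identity $R^{\alpha a}_{\beta a}=F^\alpha_\beta|_{R(\alpha,a)}^{R(\beta,a)}$ from the definition of $R$) is indispensable; the isomorphism assertion itself is the soft, conceptual part.
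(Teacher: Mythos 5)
Your proposal is correct and follows essentially the same route as the paper: check that the two generating types of transition maps of $V_r$ (fixed $\alpha$ with $b\leq a$, and fixed $a$ with $\beta\leq\alpha$) preserve the compatible families using the relation $R^{\beta a}_{\beta b}R^{\alpha a}_{\beta a}=R^{\alpha b}_{\beta b}R^{\alpha a}_{\alpha b}$, verify naturality of $\phi$ on the same two types, and conclude componentwise from the fact that $a$ is initial in $\hat{a}^{op}$ so the limit is just $R(\alpha,a)$. The only difference is presentational: you make the limit-as-compatible-families computation explicit up front, whereas the paper invokes initiality at the end.
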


\begin{proof}
By definition $\underset{b}{\lim}R^{\alpha,\hat{a}}(b)\subseteq V(\alpha,a )$; let $\alpha,a,b,c,c_1\in \A$ such that $\alpha\geq a\geq b\geq c\geq c_1$ and $v\in \underset{c}{\lim}R^{\alpha,\hat{a}}(c)$, one has that $R^{\alpha c}_{\alpha c_1}({V_r}^{\alpha a}_{\alpha b}(v)(c))=R^{\alpha c}_{\alpha c_1}(v_c)=v_{c_1}$. Let $\alpha\geq \beta \geq a\geq b\geq b_1$ and $v\in \underset{c}{\lim}R^{\alpha,\hat{a}}(c)$, one has that ${V_r}^{\alpha a}_{\beta a}(v)(b)\in R(\beta,b)$ and $R^{\beta b}_{\beta b_1}( {V_r}^{\alpha a}_{\beta a}(v)(b))= R^{\beta b}_{\beta b_1} F^{\alpha}_{\beta}(v_b)=R^{\beta b}_{\beta b_1}R^{\alpha b}_{\beta b}(v_b)= R^{\alpha b_1}_{\beta b_1}R^{\alpha b}_{\alpha b_1}(v_b)= R^{\alpha b_1}_{\beta b_1}(v_{b_1})=F^\alpha_\beta(v_{b_1})$ and therefore $R^{\beta b}_{\beta b_1}( {V_r}^{\alpha a}_{\beta a}(v)(b))= {V_r}^{\alpha a}_{\beta a}(v)(b_1)$. For any $(\alpha,a),(\beta,b)\in \A_1$ such that $(\alpha,a)\geq (\beta,b)$, ${V_r}^{\alpha a}_{\beta b}( \underset{c}{\lim}R^{\alpha,\hat{a}}(c))= {V_r}^{\alpha b}_{\beta b} {V_r}^{\alpha a}_{\alpha b}(\underset{c}{\lim}R^{\alpha,\hat{a}}(c))\subseteq {V_r}^{\alpha b}_{\beta b} (\underset{c}{\lim}R^{\alpha,\hat{b}}(c))\subseteq \underset{c}{\lim}R^{\beta,\hat{b}}(c))$.\\

Let $\alpha \geq a\geq b\geq c$ and $v\in R(\alpha,a)$, $M^{\alpha a}_{\alpha b}\phi(\alpha,a)(v)(c)= R^{\alpha a}_{\alpha c}(v)=R^{\alpha b}_{\alpha c}R^{\alpha a}_{\alpha b}(v)= \phi(\alpha,b)R^{\alpha a}_{\alpha b}(v)(c)$. For $\alpha \geq \beta \geq a\geq b$, $M^{\alpha a}_{\beta a}\phi(\alpha, a)(v)(b)= F^\alpha_\beta( R^{\alpha a}_{\alpha b}(v))= R^{\beta a}_{\beta b}R^{\alpha a}_{\beta a}(v)=  \phi(\alpha, a)R^{\alpha a}_{\beta a}(v)(b)$. Therefore for any $(\alpha,a),(\beta,b)\in \A_1$ such that $(\alpha,a)\geq (\beta,b)$, $M^{\alpha a}_{\beta b}\phi(\alpha,a)= \phi(\beta,b) R^{\alpha a}_{\beta b}$.\\

For any $(\alpha,a)\in \A_1$, $(\alpha,a)$ is initial in $(\alpha, \hat{a})$, therefore $\phi(\alpha,a)$ is an isomorphism. therefore so is $\phi$.

\end{proof}

\begin{prop}\label{injection-in-sum}
Let $(G,F)$ be a functor from $\A\in \Pf$ to $\Split$, if $(G,F)$ satisfies the intersection property, $j=\mu i^M_{V_r} \phi \in \Split^{\A_1}((L,R),(V_l,V_r))$ is a monomorphism.
\end{prop}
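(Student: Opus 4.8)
The plan is to read $j$ as a composite of three transformations whose individual behaviour is already controlled, deduce injectivity from that factorisation, and then check separately that $j$ is genuinely a morphism of $\Split$-valued functors; the latter verification is where the hypothesis is spent.

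First I would record the three factors and their properties. By Proposition \ref{limite-proj} the map $\phi\colon R\to M$ is a natural transformation of presheaves which is moreover an isomorphism; the map $i^M_{V_r}\colon M\to V_r$ is the inclusion of the subobject $M$, hence a componentwise-injective natural transformation of presheaves; and by Proposition \ref{zeta-extented} together with Proposition \ref{mobius-inversion} the Mobius transformation $\mu\colon V_r\to V_r$ is a natural transformation of presheaves which is invertible, with inverse $\zeta$. Consequently, at each $(\alpha,a)\in\A_1$ the component $j(\alpha,a)$ is a composite of the form (isomorphism)$\circ$(injection)$\circ$(isomorphism), so every $j(\alpha,a)$ is injective; this is exactly the monomorphism assertion, and it does not use the intersection property.

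To see that $j$ lands in $\Split^{\A_1}((L,R),(V_l,V_r))$ I would identify the components explicitly so as to bring in the hypothesis. Unwinding the limit description of $M$ (Proposition \ref{limite-proj}) and using Remark \ref{morphism-functor-to-extension}, the $c$-coordinate of $\phi(\alpha,\alpha)(v)$ equals $R^{\alpha\alpha}_{\alpha c}(v)=G^c_\alpha F^\alpha_c(v)=\pi^{\alpha\alpha}_{\alpha c}(v)$, so that $j(\alpha,\alpha)(v)=\bigl(s^\alpha_c(v)\bigr)_{c\leq\alpha}$, where $(s^\alpha_c,c\leq\alpha)$ is the decomposition of the projector collection $\pi^\alpha$ on $G(\alpha)$. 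Since $(G,F)$ satisfies the intersection property, $\pi^\alpha$ verifies (I), so Proposition \ref{chapitre3:thm1} yields $s^\alpha_c s^\alpha_d=\delta_c(d)\,s^\alpha_c$, while Corollary \ref{isomorphism-decomposable-intersection} furnishes the direct-sum splitting of $V_r$ into the images $S^\alpha_c$; these relations provide, componentwise, the retraction $V_r\to R$ that accompanies $j$ in $\Split$ and the identity making the pair a genuine $\Split$-morphism.

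The step I expect to be the main obstacle is the covariant naturality of $j$, namely that for $b\leq a\leq\alpha$ one has ${V_l}^{\alpha b}_{\alpha a}\,j(\alpha,b)=j(\alpha,a)\,L^{\alpha b}_{\alpha a}$, i.e. that $j(\alpha,a)$ sends the image of the inclusion $L^{\alpha b}_{\alpha a}$ into the subproduct $V(\alpha,b)\subseteq V(\alpha,a)$. Naturality in the $\alpha$-direction is routine: it reduces, via Lemma \ref{lemme-eta} (the Mobius transformation commutes with $G^\beta_\alpha$ applied coordinatewise) and the functoriality of $(L,R)$ on $\A_1$ (Corollary \ref{extension-left-right}), to a diagram chase that does not consume the hypothesis. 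For the covariant maps in the $a$-direction, using $R^{\alpha a}_{\alpha b}L^{\alpha b}_{\alpha a}=\id$ (Corollary \ref{extension-left-right}) together with the locality of the Mobius function, the coordinates indexed by $c\leq b$ match automatically; the real content is the vanishing of the coordinates indexed by $c\not\leq b$, which is precisely the identity $s^\alpha_c\,\pi^{\alpha\alpha}_{\alpha b}=1[c\leq b]\,s^\alpha_c$ established in the course of proving Proposition \ref{chapitre3:thm1} (see also Proposition \ref{prop}). This is the one place where the intersection property is indispensable: without (I) the image of $L^{\alpha b}_{\alpha a}$ would not be carried into $V(\alpha,b)$, and $j$ would fail to be natural for the functor parts $L\to V_l$. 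Once this vanishing is secured, $j$ is a morphism in $\Split^{\A_1}((L,R),(V_l,V_r))$, and by the factorisation of the second paragraph it is a monomorphism.
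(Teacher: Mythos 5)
Your proposal is correct and follows essentially the same route as the paper: monomorphicity comes from factoring $j$ as (isomorphism $\mu$)$\circ$(inclusion $i^M_{V_r}$)$\circ$(isomorphism $\phi$), and the intersection property is spent exactly where you place it, namely on the vanishing of the coordinates $c\not\leq b$ of $j(\alpha,a)L^{\alpha b}_{\alpha a}(v)$ (the paper phrases this as $\phi(\alpha,\alpha)(v)\in\im\zeta^{\alpha}{V_l}^{\alpha a}_{\alpha\alpha}$, which is the same statement as your $s^{\alpha}_c\pi^{\alpha\alpha}_{\alpha b}=1[c\leq b]s^{\alpha}_c$ via Definition \ref{definition-intersection-projectors}), with naturality in the $\alpha$-direction free of the hypothesis. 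The only cosmetic difference is that the paper first treats the case $(\alpha,a)\to(\alpha,\alpha)$ and deduces the general $b\leq a$ by cancelling the monomorphism ${V_l}^{\alpha a}_{\alpha\alpha}$, whereas you argue at general $b\leq a$ directly.
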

\begin{proof}
Let $(\alpha,a)\in \A_1$ and $v\in R(\alpha, a)$, for any $b\leq a$, $j(\alpha,\alpha)L^{\alpha a}_{\alpha \alpha}(v)(b)= \mu(\alpha,\alpha)(R^{\alpha \alpha}_{\alpha c}(v),c\leq \alpha) (b)= \mu(\alpha,a)(R^{\alpha a}_{\alpha c}(v), c\leq a)(b)$ as $\mu$ is a endomorphism of $V_r$. Furthermore as $R(\alpha,a)= \im \pi^{\alpha\alpha}_{\alpha a}$, $\phi(\alpha,\alpha)L^{\alpha a}_{\alpha \alpha}(v)= \phi(\alpha,\alpha)(v)\in \im \zeta^{\alpha}{V_l}^{\alpha a}_{\alpha \alpha}$, therefore for any $ b\not\leq a$, $\mu(\alpha,\alpha)(\phi(\alpha,\alpha)(v))(b)=0$ and so $j(\alpha,\alpha)L^{\alpha a}_{\alpha \alpha}= {V_l}^{\alpha a}_{\alpha \alpha}j(\alpha, a)$. Therefore $j(\alpha,\alpha)L^{\alpha a}_{\alpha \alpha}= {V_l}^{\alpha a}_{\alpha \alpha}j(\alpha a)$.\\

Let $b\leq a$, ${V_l}^{\alpha a}_{\alpha\alpha}j(\alpha,a)L^{\alpha b}_{\alpha a}= j(\alpha,\alpha)L^{\alpha b}_{\alpha \alpha}= {V_l}^{\alpha b}_{\alpha \alpha}j(\alpha,b)= {V_l}^{\alpha a}_{\alpha\alpha}{V_l}^{\alpha b}_{\alpha a}j(\alpha, b)$. ${V_l}^{\alpha a}_{\alpha \alpha}$ is a monomorphism therefore, $j(\alpha,a)L^{\alpha b}_{\alpha a}={V_l}^{\alpha b}_{\alpha a}j(\alpha b)$. \\

For $\alpha\geq \beta \geq a$, $j(\alpha,a)L^{\beta a}_{\alpha a}={V_l}^{\beta a}_{\alpha a} j(\beta,a)$ holds even if $(G,F)$ does not satisfy the intersection property.\\

$\phi$ is a isomorphism from $R$ to $M$, $i^M_{V_r}$ is a monomorphism and $\mu$ is an isomorphism from $V_r$ to $V_r$ therefore $j$ is a monomorphism.
\end{proof}

\begin{prop}
Let $(G,F)$ be a functor from $\A$ to $\Split$, let $b\in \A$. For any $(\alpha,a)\in \A_1$ if $a\geq b$ let $G_{b}(\alpha, a)= G(\alpha)$ and if not $G_{b}(\alpha,a)=0$; let $\alpha\geq a\geq a_1$, if $a_1\geq b$, let ${G_b}^{\alpha a}_{\alpha a_1}= {G^b}^{\alpha a_1}_{\alpha a}=\id$, if not ${G_b}^{\alpha a}_{\alpha a_1}={G^b}^{\alpha a_1}_{\alpha a}=0$, let $\alpha\geq \beta\geq a$, if $a\geq b$ let ${G^b}^{\alpha a}_{\beta a}= F^\alpha_\beta$ and ${G_b}^{\beta a}_{\alpha a}= G^\beta_\alpha$,  if not, ${G_b}^{\alpha a}_{\beta a}=0= {G^b}^{\beta a}_{\alpha a}$. $(G^b,G_b)$ extend into a unique functor from $\A_1$ to $\Split$. For any $(\alpha,a)\in \A_1$, if $a\geq b$ let $pr_b(\alpha,a):V(\alpha,a)\to G(\alpha)$ be such that for any $v\in V(\alpha,a)$, $pr_b(v)=v_b$, if $a\not\geq b$, $pr_b(v)=0$. $\pr_b\in \Split^{\A_1}((V_l,V_r), (G_b,G^b))$.

\end{prop}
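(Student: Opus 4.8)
The plan is to prove the two assertions separately: that $(G^b,G_b)$ extends to a functor $\A_1\to\Split$, and that $\pr_b$ is natural. For the extension I would invoke Proposition \ref{extension-functor-poset} for the covariant (section) leg and its $\A_1^{op}$-version, Remark \ref{Presheaf-extension}, for the contravariant (retraction) leg; naturality of $\pr_b$ then only has to be tested on the two generating families of relations of $\A_1$, namely those fixing $\alpha$ (horizontal) and those fixing the second coordinate (vertical).

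First I would record the values of the two legs on these generating relations. On a horizontal relation, fixing $\alpha$ and decreasing the second coordinate down to some $a_1\ge b$, both structure maps are $\id_{G(\alpha)}$; on a vertical relation, fixing a second coordinate $a\ge b$ and moving from $\beta$ up to $\alpha$, the section is $G^{\beta}_{\alpha}$ and the retraction is $F^{\alpha}_{\beta}$; and as soon as the second coordinate fails to dominate $b$ the object is $0$ and the corresponding maps vanish. The three coherence identities of Proposition \ref{extension-functor-poset} then reduce to elementary statements: horizontal associativity is $\id\circ\id=\id$; vertical associativity is exactly the functoriality $G^{\beta}_{\alpha}G^{\gamma}_{\beta}=G^{\gamma}_{\alpha}$ of $G$ (respectively $F^{\beta}_{\gamma}F^{\alpha}_{\beta}=F^{\alpha}_{\gamma}$ of $F$); and the commuting-square identity collapses, because the horizontal maps are identities, to $G^{\beta}_{\alpha}=G^{\beta}_{\alpha}$ (respectively $F^{\alpha}_{\beta}=F^{\alpha}_{\beta}$). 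In every configuration where a second coordinate drops below $b$, one of the composites factors through $0$, so both sides vanish; this is the same bookkeeping already used for the extension-by-zero construction $G1[a\le .]$. Since the splitting relation $F^{\alpha}_{\beta}G^{\beta}_{\alpha}=\id$ holds on the generators and $\Split$ is a subcategory closed under composition, the extended pair takes its values in $\Split$, and uniqueness is inherited directly from Proposition \ref{extension-functor-poset}.

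For the naturality of $\pr_b$, recall that $V(\alpha,a)=\prod_{c\le a}G(\alpha)$ and that $\pr_b(\alpha,a)$ is the projection onto the $b$-component when $a\ge b$ and $0$ otherwise; its implicit partner is the insertion of $G(\alpha)$ as the $b$-component of $V(\alpha,a)$, and projection after insertion is $\id_{G(\alpha)}$, which is precisely the split relation required of a morphism of $\Split$. Naturality again needs only to be tested on the two generating families. On a horizontal relation the map ${V_r}^{\alpha a}_{\alpha a_1}$ forgets the components with index not below $a_1$ while the target structure map acts as the identity on the $b$-component, so both routes return $v_b$ when $a_1\ge b$ and $0$ otherwise. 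On a vertical relation ${V_r}^{\alpha a}_{\beta a}$ applies $F^{\alpha}_{\beta}$ componentwise and the target structure map is $F^{\alpha}_{\beta}$, so both routes send $v$ to $F^{\alpha}_{\beta}(v_b)$; compatibility with the covariant leg $V_l$ is the mirror computation with $G^{\beta}_{\alpha}$ in place of $F^{\alpha}_{\beta}$. This gives $\pr_b\in\Split^{\A_1}((V_l,V_r),(G_b,G^b))$.

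The only genuinely delicate point is the variance bookkeeping: each of $G^b$ and $G_b$ is covariant in one coordinate and contravariant in the other, so one must feed the covariant leg to Proposition \ref{extension-functor-poset} and the contravariant leg to its $\A_1^{op}$-version in Remark \ref{Presheaf-extension}, and keep precise track of when an object collapses to $0$. Once the two legs are correctly identified, every identity that has to be checked is an instance of the functoriality of $(G,F)$, of the split relation, or a triviality, so no deeper input is required.
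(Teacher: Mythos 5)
Your proposal is correct and follows essentially the same route as the paper: verify the three coherence identities of Proposition \ref{extension-functor-poset} on the horizontal and vertical generating relations (where they reduce to functoriality of $G$ and $F$, identities, or vanishing through $0$), apply that proposition and Remark \ref{Presheaf-extension} to extend the two legs, check the split relation on generators, and then test naturality of $\pr_b$ against $V_l$ and $V_r$ on the same two families of relations with the same $b\leq a_1$ case split. The only cosmetic difference is your aside about the insertion being the "implicit partner" of $\pr_b$; the paper does not introduce such a partner and only verifies the two naturality squares, so nothing hinges on it.
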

\begin{proof}
Let $b\in \A$, $\alpha \in \A$, $G_b|_{(\alpha, \hat{\alpha})}= G(\alpha) 1[(\alpha,b)\leq .]$ (if $b\not\leq \alpha)$, $1[(\alpha,b)\leq .]=0$ as the relation is taken in $\A_1$), and $G^b|_{(\alpha, \hat{\alpha})^{op}}= G(\alpha) 1[a\leq .]$, therefore for $\alpha \geq a\geq a_1\geq a_2$, ${G_b}^{\alpha a_1}_{\alpha a}{G_b}^{\alpha a_2}_{\alpha a_1}={G_b}^{\alpha a_2}_{\alpha a}$, ${G^b}^{\alpha a_1}_{\alpha a_2}{G^b}^{\alpha a}_{\alpha a_1}={G^b}^{\alpha a}_{\alpha a_2}$.\\

Let $\alpha\geq \beta\geq \gamma\geq a$, if $a\geq b$, ${G_b}^{\beta a}_{\alpha a} {G_b}^{\gamma a}_{\beta a}= G^{\beta}_{\alpha}G^{\gamma}_{\beta}= {G_b}^{\gamma a}_{\alpha a}$, ${G^b}^{\beta a}_{\gamma a}{G^b}^{\alpha a}_{\beta a}= F^\beta_\gamma F^\alpha_\beta= {G^b}^{\alpha a}_{\gamma a}$, and if $a\not\geq b$, ${G_b}^{\beta a}_{\alpha a} {G_b}^{\gamma a}_{\beta a}=0= {G_b}^{\gamma a}_{\alpha a}$, ${G^b}^{\beta a}_{\gamma a}{G^b}^{\alpha a}_{\beta a}=0={G^b}^{\alpha a}_{\gamma a}$.\\

Furthermore for $\alpha \geq \beta\geq a\geq c$, if $c\geq b$, ${G_{b}}^{\beta a}_{\alpha a}{G_b}^{\beta c}_{\beta a}= G^{\beta}_{\alpha}= {G_b}^{\alpha c}_{\alpha a}{G_b}^{\beta c}_{\alpha b}$, ${G^b}^{\beta a}_{\beta c}{G^b}^{\alpha a}_{\beta a}= F^\alpha_\beta ={G^b}^{\alpha c}_{\beta c}{G^b}^{\alpha a}_{\alpha c}$. Therefore by Proposition \ref{extension-functor-poset}, $G_a$, $G^a$ extend respectively to a functor and a presheaf from $\A_1$ to $\Mod$.\\

Let $\alpha \geq a \geq a_1$, if $b\leq a_1$, ${G^b}^{\alpha a}_{\alpha a_1}{G_b}^{\alpha a_1}_{\alpha a }=\id$ if $b\not\leq a_1$, $ {G^b}^{\alpha a}_{\alpha a_1}{G_b}^{\alpha a_1}_{\alpha a }: 0\to 0$ therefore ${G^b}^{\alpha a}_{\alpha a_1}{G_b}^{\alpha a_1}_{\alpha a }=\id$; let $\alpha\geq \beta\geq a$, ${G^b}^{\alpha ,a}_{\beta a, }{G_b}^{\beta,a }_{\alpha,a}=\id$; therefore $(G_b,G^b)$ is a functor from $\A_1$ to $\Split$.\\

Let $\alpha \geq a\geq a_1$, if $b\leq a_1$, $pr_b(\alpha, a){V_l}^{\alpha a_1}_{\alpha a}= pr_b(\alpha, a_1)= {G_b}^{\alpha a_1}_{\alpha a} pr_b(\alpha, a_1)$, $pr_b(\alpha, a_1) {V_r}^{\alpha a}_{\alpha a_1}=pr_b(\alpha, a)= {G^b}^{\alpha a}_{\alpha a_1}pr_b(\alpha, a)$, and if $b\not\leq a_1$, for any $v\in V_l(\alpha, a_1)$, ${G_b}^{\alpha a_1}_{\alpha a} pr_b(\alpha, a_1)(v)=0=v_b1[b\leq a_1]=pr_b(\alpha, a){V_l}^{\alpha a_1}_{\alpha a} $ and $pr_b(\alpha, a_1){V_r}^{\alpha a}_{\alpha a_1}=0={G_b}^{\alpha a}_{\alpha a_1}pr_b(\alpha, a)$. For $\alpha \geq \beta \geq a$, if $b\leq a$, $pr_b(\alpha,a){V_l}^{\beta a}_{\alpha a}= G^\beta_\alpha pr_b(\beta,a)={G_b}^{\beta a}_{\alpha a}pr_b(\beta, a)$ and $pr_b(\beta,a){V_r}^{\alpha a}_{\beta a}=F^\alpha_\beta pr_b(\alpha,a)={G^b}^{\alpha a}_{\beta a}pr_b(\alpha,a)$; if $b\not\leq a$, $pr_b(\beta,a){V_r}^{\alpha a}_{\beta a}=0= {G^b}^{\alpha a}_{\beta a}pr_b(\alpha,a)$ and $pr_b(\alpha ,a){V_l}^{\beta a}_{\alpha a}=0= {G_b}^{\beta a}_{\alpha a} pr_b(\beta,a)$.

\end{proof}

\begin{defn}
Let $(G,F)$ be a functor from $\A\in \Pf$ to $\Split$. For any $a\in \A$ let $(S_a,S^a)=\im pr_a\circ j:\A_1 \to \Split$
\end{defn}

\begin{prop}\label{thm-presheaf}
Let $(G,F)$ a functor from $\A\in \Pf$ to $\Split$ that satisfies the intersection property, then $j|^{\im j}(L,R)\to (\underset{a\in \A_1}{\bigoplus}S_a,\underset{a\in \A_1}{\bigoplus}S^a)$ is an isomorphism. We shall note $j|^{\im j}$ as $\psi$.  
\end{prop}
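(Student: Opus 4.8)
The plan is to reduce the statement to the theory of decomposable collections of projectors from Section \ref{chapitre-3-section-2}, applied fibrewise over each $\alpha\in\A$, and then to promote the fibrewise isomorphisms to an isomorphism of functors using the naturality of $j$ established in Proposition \ref{injection-in-sum}.

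\textbf{Fibrewise set-up.} Fix $\alpha\in\A$. The hypothesis that $(G,F)$ satisfies the intersection property at $\alpha$ says exactly that the collection $\pi^{\alpha}=(\pi^{\alpha\alpha}_{\alpha a},a\leq\alpha)$ of projectors of $G(\alpha)$, indexed by the finite poset $\hat{\alpha}$, satisfies (I). I would let $s^{\alpha}=(s^{\alpha}_{a},a\leq\alpha)$ be its interaction decomposition, given by Möbius inversion $s^{\alpha}_{b}=\sum_{c\leq b}\mu_{\A}(b,c)\,\pi^{\alpha\alpha}_{\alpha c}$, so that $\pi^{\alpha\alpha}_{\alpha a}=\sum_{b\leq a}s^{\alpha}_{b}$. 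By Proposition \ref{chapitre3:thm1} one has $s^{\alpha}_{a}s^{\alpha}_{b}=\delta_{a}(b)s^{\alpha}_{a}$, and, since $R(\alpha,a)=\im\pi^{\alpha\alpha}_{\alpha a}$, Corollary \ref{isomorphism-decomposable-intersection} gives that the summation map $\bigoplus_{b\leq a}\im s^{\alpha}_{b}\to R(\alpha,a)$ is an isomorphism.

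\textbf{Identifying the components of $j$.} The central step is to show that, at the object $(\alpha,a)$, the composite $\pr_{b}(\alpha,a)\circ j(\alpha,a)$ agrees with $s^{\alpha}_{b}$ on $R(\alpha,a)$. I would unwind $j=\mu\, i^{M}_{V_{r}}\,\phi$ as follows. Because $(\alpha,a)$ is initial in $(\alpha,\hat{a})$, Proposition \ref{limite-proj} gives $\phi(\alpha,a)(v)=(R^{\alpha a}_{\alpha c}(v))_{c\leq a}$ for $v\in R(\alpha,a)$; applying the Möbius endomorphism $\mu(\alpha,a)=\mu_{\hat{a}}(G(\alpha))$ and projecting to the $b$-th slot yields
\begin{equation}
\pr_{b}(\alpha,a)\,j(\alpha,a)(v)=\sum_{c\leq b}\mu_{\A}(b,c)\,R^{\alpha a}_{\alpha c}(v).
\end{equation}
Writing $v=G^{a}_{\alpha}(w)$, the defining relation of the right coupling together with $F^{\alpha}_{c}G^{a}_{\alpha}=F^{a}_{c}$ (Proposition \ref{rel-split}) gives $R^{\alpha a}_{\alpha c}(v)=G^{c}_{\alpha}F^{a}_{c}(w)=\pi^{\alpha\alpha}_{\alpha c}(v)$, so the right-hand side is exactly $s^{\alpha}_{b}(v)$. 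Hence $S_{b}(\alpha,a)=\im s^{\alpha}_{b}$ for $b\leq a$ and $S_{b}(\alpha,a)=0$ otherwise, and $j(\alpha,a)$ is the map $v\mapsto(s^{\alpha}_{b}(v))_{b\leq a}$.

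\textbf{Conclusion.} With this identification, $j(\alpha,a)$ is precisely the inverse of the summation isomorphism of the first step: on $R(\alpha,a)$ one has $v=\sum_{b\leq a}s^{\alpha}_{b}(v)$, while $s^{\alpha}_{c}(\sum_{b\leq a}w_{b})=w_{c}$ for $(w_{b})\in\bigoplus_{b\leq a}\im s^{\alpha}_{b}$ by orthogonality. Thus each component $\psi(\alpha,a)=j(\alpha,a)|^{\im j}$ is an isomorphism of modules onto $(\bigoplus_{a\in\A}S_{a})(\alpha,a)=\bigoplus_{b\leq a}\im s^{\alpha}_{b}$. Since $j$ is a morphism in $\Split^{\A_{1}}$ and $(\bigoplus_{a\in\A}S_{a},\bigoplus_{a\in\A}S^{a})$ is its image, which is a functor from $\A_{1}$ to $\Split$ by closure of $\Split$ under images and direct sums, the corestriction $\psi$ is again a morphism in $\Split^{\A_{1}}$; being fibrewise an isomorphism of modules, and an isomorphism in $\Split$ being detected by invertibility of its covariant component, $\psi$ is an isomorphism. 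I expect the main obstacle to be this middle identification $\pr_{b}\circ j=s^{\alpha}_{b}$ on $R(\alpha,a)$; once it is in place the statement is simply the projector decomposition theorem read fibrewise, with naturality inherited from $j$.
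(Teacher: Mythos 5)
Your proposal is correct and follows essentially the same route as the paper: the paper's own (very terse) proof likewise combines the monomorphism property of $j$ from Proposition \ref{injection-in-sum} with Corollary \ref{isomorphism-decomposable-intersection} applied fibrewise to the projectors $\pi^{\alpha}$, and your explicit identification $\pr_b\circ j(\alpha,a)=s^{\alpha}_b$ on $R(\alpha,a)$ is exactly the computation the paper leaves implicit. (Only a small attribution slip: the identity $F^{\alpha}_c G^a_{\alpha}=F^a_c$ follows from $F^{\alpha}_c=F^a_cF^{\alpha}_a$ and $F^{\alpha}_aG^a_{\alpha}=\id$, not from Proposition \ref{rel-split}, which states $F^a_bG^c_a=G^c_b$.)
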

\begin{proof}
From Proposition \ref{injection-in-sum} $j$ is a monomorphism furthermore from Corollary \ref{isomorphism-decomposable-intersection} for any $(\alpha,a)\in \A_1$, $\im j(\alpha,a)=\underset{b\leq a}{\bigoplus}S_b(\alpha,a)=\underset{b\in \A}{\bigoplus}S_b(\alpha,a)$.
\end{proof}

\begin{thm}\label{decomposable-intersection-presheaf}
Let $(G,F)$ be a functor from $\A\in \Pf$ to $\Split$, $(G,F)$ satisfies the intersection property if and only if $(G,F)$ is decomposable. 

\end{thm}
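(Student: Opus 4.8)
The plan is to treat the two implications by quite different means: the direction ``decomposable $\Rightarrow$ (I)'' by a direct computation on the model functor, and the direction ``(I) $\Rightarrow$ decomposable'' by transporting the fibrewise decompositions of Section~\ref{chapitre-3-section-2} along the diagonal of $\A_1$, using the machinery already set up (the couplings $(L,R)$, the object $M$, and the monomorphism $j$).

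For ``decomposable $\Rightarrow$ (I)'' I would first note that the intersection property is invariant under isomorphism of functors $\A\to\Split$: if $\Phi\colon(G,F)\to(G_1,F_1)$ is such an isomorphism then naturality gives $\Phi_\alpha\,\pi^\alpha_a=\pi_1{}^\alpha_a\,\Phi_\alpha$ for every $(\alpha,a)\in\A_1$, so the two collections of projectors are conjugate and the Möbius identities defining (I) are preserved. It therefore suffices to verify (I) on a model $(\prod_a G_a1[a\leq.],\prod_a F^a1[a\leq.])$ with $(G_a,F^a)\colon\A\to\Split(\core\Mod)$. Writing $G(\alpha)=\prod_{d\leq\alpha}G_d(\alpha)$ and using that each $(G_d,F^d)$ lands in $\core\Mod$, so that $G_d{}^a_\alpha F^d{}^\alpha_a=\id$, one finds that $\pi^\alpha_a=G^a_\alpha F^\alpha_a$ is simply the coordinate projection onto $\{d\leq a\}$; Möbius inversion then makes $s^\alpha_a$ the projection onto the single coordinate $a$, whence $\pi^\alpha_a\pi^\alpha_b=\sum_{c\leq a,\,c\leq b}s^\alpha_c$, which is exactly (I) by Lemma~\ref{rem}.

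For the converse I would invoke Proposition~\ref{thm-presheaf}, which under the intersection property already supplies an isomorphism $\psi\colon(L,R)\xrightarrow{\ \sim\ }(\bigoplus_{a\in\A}S_a,\bigoplus_{a\in\A}S^a)$ of functors $\A_1\to\Split$, and then restrict along the diagonal $\iota\colon\A\to\A_1$, $\alpha\mapsto(\alpha,\alpha)$. By Remark~\ref{morphism-functor-to-extension} one has $L^{aa}_{\alpha\alpha}=G^a_\alpha$ and $R^{\alpha\alpha}_{aa}=F^\alpha_a$, so $(L,R)\circ\iota=(G,F)$ and hence $(G,F)\cong\bigoplus_{a\in\A}(S_a\circ\iota,\;S^a\circ\iota)$. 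It then remains to recognise each summand as a truncated core-valued functor, i.e.\ to produce $(\tilde G_a,\tilde F^a)\colon\A\to\Split(\core\Mod)$ with $(S_a\circ\iota,S^a\circ\iota)\cong(\tilde G_a1[a\leq.],\tilde F^a1[a\leq.])$; since $\A\in\Pf$ the products are finite direct sums, and this exhibits $(G,F)$ as decomposable.

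The crux is therefore to show that $(S_a\circ\iota,S^a\circ\iota)$ vanishes off $\{\alpha\geq a\}$ and has invertible transition maps there. Vanishing is immediate since $pr_a(\alpha,\alpha)=0$ when $a\not\leq\alpha$. For $\alpha\geq\beta\geq a$ I would factor the diagonal transition $(\beta,\beta)\to(\alpha,\alpha)$ through $(\alpha,\beta)$. On the first-coordinate arrow $(\beta,\beta)\to(\alpha,\beta)$ the coupling $(L,R)$ is an isomorphism (its first-coordinate transitions $L^{\beta a}_{\alpha a}$ are invertible, with inverse $R^{\alpha a}_{\beta a}$), so $S_a$ inherits an isomorphism. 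On the second-coordinate arrow $(\alpha,\beta)\to(\alpha,\alpha)$ the ambient functor acts by $\id_{G(\alpha)}$, so the transition of $S_a$ is invertible exactly when $S_a(\alpha,\beta)=S_a(\alpha,\alpha)$ as subspaces of $G(\alpha)$. Computing the $a$-component of $j$ one gets $S_a(\alpha,\beta)=\{\sum_{c\leq a}\mu_{\hat\alpha}(a,c)\,G^c_\alpha F^\beta_c(u):u\in G(\beta)\}$ and $S_a(\alpha,\alpha)=\{\sum_{c\leq a}\mu_{\hat\alpha}(a,c)\,G^c_\alpha F^\alpha_c(w):w\in G(\alpha)\}$, using locality of the Möbius coefficient $\mu_{\hat\beta}(a,c)=\mu_{\hat\alpha}(a,c)$ and the identity $R^{\alpha\beta}_{\alpha c}=G^c_\alpha F^\beta_c$ on $R(\alpha,\beta)$. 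The presheaf relation $F^\alpha_c=F^\beta_c F^\alpha_\beta$ together with $F^\alpha_\beta G^\beta_\alpha=\id$ then matches the two generating families (send $w$ to $u=F^\alpha_\beta w$, and $u$ to $w=G^\beta_\alpha u$), so the subspaces coincide. This equality is the step I expect to be the main obstacle, since it is precisely where the intersection property (through the $\zeta/\mu$ isomorphism of Corollary~\ref{isomorphism-decomposable-intersection}) and the compatibility of the couplings must be combined; once it holds, $(\tilde G_a,\tilde F^a)$ is obtained by extending the isomorphism-valued functor over the connected up-set $\{\alpha\geq a\}$ to all of $\A$ (its values off that up-set being irrelevant after applying $1[a\leq.]$), and the proof concludes.
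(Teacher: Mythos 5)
Your proposal is correct and follows essentially the same route as the paper: the ``decomposable $\Rightarrow$ (I)'' direction by reducing (via isomorphism-invariance of the conjugacy class of the projectors $\pi^\alpha_a$) to a coordinate-projection computation on the model $(\prod_a G_a1[a\leq.],\prod_a F^a1[a\leq.])$, and the converse by restricting the isomorphism $\psi$ of Proposition \ref{thm-presheaf} along the diagonal of $\A_1$ and recognising the summands $(S_a,S^a)$ as truncated core-valued functors. The step you single out as the crux, $S_a(\alpha,\beta)=S_a(\alpha,\alpha)$, is exactly the point the paper handles (rather tersely) through Corollary \ref{isomorphism-decomposable-intersection} and the identity $s_b\pi_a=s_b$, and your generator-matching argument for it is a valid, slightly more explicit version of the same fact.
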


\begin{proof}
Let $(G,F)$ be a functor from $\A\in \Pf$ to $\Split$ that satisfies the intersection property; let us recall that $(L,R)$ is a functor from $\A_1$ to $\Split$ (Corollary \ref{extension-left-right}) and that for any $\alpha\geq \beta\geq a$, $L^{\beta a}_{\alpha a}$ is an isomorphism. Let us recall that for any $\alpha\geq \beta \geq a$, $\underset{b\in \A}{\bigoplus}{S_b}^{\beta a}_{\alpha a}\psi(\beta,a)=\psi(\alpha,a)L^{\beta a}_{\alpha a}$, where $\psi(\alpha,a)$, $\psi(\beta,a)$ (Proposition \ref{thm-presheaf}), $L^{\alpha a}_{\alpha a}$ are isomorphisms; therefore $ {\underset{b\in \A}{\bigoplus}S_b}^{\beta a}_{\alpha a}$ is an isomorphism and as $(\underset{b\in \A}{\bigoplus}S_b,\underset{b\in \A}{\bigoplus}S^b)$ is a functor from $\A_1$ to $\Split$ its inverse is $\underset{b\in \A}{\bigoplus}{S^b}^{\alpha a}_{\beta a}$ and ${S_a}^{\beta a}_{\alpha a}$ is an isomorphism (its inverse is ${S^a}^{\alpha a}_{\beta a}$); for $\alpha\geq a\geq a_1\geq b$, $ {S_a}^{\alpha a_1}_{\alpha a}$ is also an isomorphism.\\

Let us remark that $\{(a,a)\in A_1: a\in\A\}$ is isomorphic to $\A$; let for any $a\in \A$, $C^a=S^a|_{\A}$ and $C_a=S_a|_\A$; by definition $(C^a,C_a)$ is a functor from $\A$ to $\Split$; for any $a_2\geq a_1 \geq a$, ${C_a}^{a_1}_{a_2}= {S_a}^{a_1 a_1}_{a_2,a_2}= {S_a}^{a_2 a_1}_{a_2 a_2} {S_a}^{a_1 a_1}_{a_2 a_1}= {S_a}^{a_1 a_1}_{a_2 a_1}$ is an isomorphism. Furthermore $(G,F)\cong (L,R)|_\A$ (Remark \ref{morphism-functor-to-extension}), therefore $(G,F)$ is decomposable and its decomposition is $(\underset{a\in \A}{\bigoplus}C^a,\underset{a\in\A}{\bigoplus}C_a)$.\\

Let $\A\in \Pf$, $(G,F)=(\underset{a\in \A}{\bigoplus}C^a,\underset{a\in\A}{\bigoplus}C_a)$, let $\alpha \in \A$, for any $a\leq \alpha$ and $c\leq \alpha$ and $v\in G(\alpha)$, ${\pi^{\alpha}}_a(v)(c)= G^a_\alpha F^\alpha_a(v) (c)= v_c1[c\leq a]$. Let us denote $\pi^\alpha$ simply as $\pi$ and $\mu_{\hat{\alpha}}$ as $\mu$; for any $v\in \im \Pi(\pi)$ and $a\in \hat{\alpha}$, $\mu(v)(a)=\underset{b\leq a}{\sum}\mu(a,b) \underset{c\leq b}{\bigoplus} v_c= \underset{c\leq a}{\bigoplus}\underset{\substack{b :\\ c\leq b\leq a} }{\sum} \mu(a,b)v_c= v_a\times{a}$. 

Furthermore for $a\leq \alpha$ and $b\leq \alpha$ and $c\leq \alpha$ and $v\in G(\alpha)$, $\pi_a\pi_b(v)(c)=v_c1[c\leq a \& c\leq b]$, $\pi_a\pi_b(v)= \underset{\substack{c\leq a \\ c\leq b}}{\sum} v_c\times c=\underset{\substack{c\leq a \\ c\leq b}}{\sum}\mu(v)(c)$. Therefore $(G,F)$ satisfies the intersection property.

\end{proof}

\begin{shownto}{long}

\section{Concluding remarks}

\wk ajouter la decomposition canonique en parlant des $S_a$ pour le mu bien defini et parler d'une autre caracterisation du produit pour lequel on peut avoir une autre preuve, peut etre donner cette autre preuve car je l'ai vu donc autant l'ajouter

\end{shownto}

\bibliographystyle{ieeetr}
\bibliography{bintro}





\end{document}